\numberwithin{equation}{section}
\newtheorem{theorem}[equation]{Theorem}
\newtheorem*{theorem*}{Theorem} 
\newtheorem{lemma}[equation]{Lemma}
\newtheorem*{conjecture*}{Mamma Conjecture}
\newtheorem*{conjecture1*}{Mamma Conjecture (revisited)}
\newtheorem{proposition}[equation]{Proposition}
\newtheorem{corollary}[equation]{Corollary}
\newtheorem*{corollary*}{Corollary}
\theoremstyle{remark}
\newtheorem{example}[equation]{Example}
\newtheorem{notation}[equation]{Notation}
\theoremstyle{remark}
\newtheorem{remark}[equation]{Remark}
\newcommand{\cA}{{\mathcal A}}
\newcommand{\cC}{{\mathcal C}}
\newcommand{\cN}{{\mathcal N}}
\newcommand{\cP}{{\mathcal P}}
\newcommand{\cR}{{\mathcal R}}
\newcommand{\cZ}{{\mathcal Z}}
\newcommand{\bbA}{\mathbb{A}}
\newcommand{\bbF}{\mathbb{F}}
\newcommand{\bbH}{\mathbb{H}}
\newcommand{\bbL}{\mathbb{L}}
\newcommand{\bbN}{\mathbb{N}}
\newcommand{\bbP}{\mathbb{P}}
\newcommand{\bbR}{\mathbb{R}}
\newcommand{\bbQ}{\mathbb{Q}}
\newcommand{\bbZ}{\mathbb{Z}}
\DeclareMathOperator{\id}{id}
\DeclareMathOperator{\NChow}{NChow} 
\DeclareMathOperator{\NNum}{NNum} 
\DeclareMathOperator{\Num}{Num} 
\newcommand{\dgcat}{\mathrm{dgcat}} 
\newcommand{\perf}{\mathrm{perf}}
\newcommand{\Chow}{\mathrm{Chow}}
\newcommand{\dg}{\mathrm{dg}}
\newcommand{\Hom}{\mathrm{Hom}}
\newcommand{\op}{\mathrm{op}}
\newcommand{\too}{\longrightarrow}
\let\oldmarginpar\marginpar
\def\marginpar#1{\oldmarginpar{\tiny #1}}
\begin{document}

\title[Grothendieck classes of quadrics and involution varieties]{Grothendieck classes of\\ quadrics and involution varieties}
\author{Gon{\c c}alo~Tabuada}
\address{Gon{\c c}alo Tabuada, Mathematics Institute, Zeeman Building, University of Warwick, Coventry CV4 7AL UK.}
\email{goncalo.tabuada@warwick.ac.uk}
\urladdr{https://homepages.warwick.ac.uk/~u1972846/}
\date{\today}

\dedicatory{To Yuri I. Manin on the occasion of his $85^{\mathrm{th}}$ birthday, with my deepest admiration and gratitude.}

\abstract{In this article, by combining the recent theory of noncommutative motives with the classical theory of motives, we prove that if two quadrics (or, more generally, two involution varieties) have the same Grothendieck class, then they have the same even Clifford algebra and the same signature. As an application, we show in numerous cases (e.g., when the base field is a local or global field) that two quadrics (or, more generally, two involution varieties) have the same Grothendieck class if and only if they are isomorphic.}
}

\maketitle


\vspace{-0.3cm}

\noindent
{\it Adoramos a perfei{\c c}\~ao, porque n\~ao a podemos ter; repugna-la-\'iamos se a tiv\'essemos.\\ O perfeito \'e o desumano porque o humano \'e imperfeito.}

\vspace{0.1cm}

\noindent
Bernardo Soares, Livro do Desassossego.

\vspace{0.6cm}

\section{Introduction}\label{sec:intro}
Let $k$ be a field and $\mathrm{Var}(k)$ the category of {\em varieties}, i.e., reduced separated $k$-schemes of finite type. The {\em Grothendieck ring of varieties $K_0\mathrm{Var}(k)$}, introduced in a letter from Grothendieck to Serre (consult \cite[letter of 16/08/1964]{SG}), is defined as the quotient of the free abelian group on the set of isomorphism classes of varieties $[X]$ by the ``scissor'' relations $[X]=[Z]+[X\backslash Z]$, where $Z$ is a closed subvariety of $X$. The multiplication law is induced by the product of varieties. Despite the efforts of several mathematicians (consult, for example, the articles \cite{Bittner,Borisov, Lunts, Manin, Sebag, Naumann,Poonen} and the references therein), the structure of the Grothendieck ring of varieties still remains nowadays poorly understood. In this article, in order to better understand the structure of the Grothendieck ring of varieties, we address the following question:

\smallskip

{\em Question: For which pairs of varieties $X$ and $Y$ does the implication $[X]=[Y] \Rightarrow X\simeq Y$ holds?} 

\smallskip

\noindent
Such implication does not holds in general because there are numerous identifications that occur in the Grothendieck ring of varieties. For example, when $X$ and $Y$ are piecewise isomorphic\footnote{A concrete example is given by the ordinary cusp $X:=\mathrm{Spec}(k[x,y]/\langle y^2 - x^3\rangle)$ and the affine line $Y:=\bbA^1$. Although $X$ and $Y$ are {\em not} isomorphic, we have $[X]=[Y]$ because they are piecewise isomorphic.}, we have $[X]=[Y]$. Moreover, as explained in \cite[Chap.~2 \S6.3]{Integration}, there also exist varieties which are not piecewise isomorphic but which still have the same Grothendieck class\footnote{A concrete example, in characteristic zero, is the following: let $V$ be a $k$-vector space of dimension $7$ and $W \subset \bigwedge^2 V^\vee$ a generic subspace of dimension $7$. Note that, by definition, an element $w$ of $W$ corresponds to a skew-symmetric bilinear form $\varphi_w$ on $V$. Under these identifications, we can consider the varieties $X:=\{P \subset V\,|\, {\varphi_w}_{|P}=0\,\,\,\forall w \in W\} \subset \mathrm{Gr}(2,V)$ and $Y:=\{w \in W\,|\,\mathrm{rank}(\varphi_w)< 6\} \subset \bbP(W)$, where $\mathrm{Gr}(2,V)$ stands for the Grassmannian of $2$-dimensional subspaces in $V$. As proved by Borisov in \cite{Borisov}, we have $[X\times \bbA^6]=[Y \times \bbA^6]$ although $X \times \bbA^6$ and $Y \times \bbA^6$ are {\em not} piecewise isomorphic.}. In this article we show that, surprisingly, the aforementioned implication holds nevertheless for numerous pairs of quadrics and involution varieties!
\subsection*{Statement of results - quadrics}
Let $k$ be a field of characteristic zero. Given a (finite-dimensional) non-degenerate quadratic form $q\colon V \to k$, let us write $\mathrm{dim}(q)$ for its dimension, $\delta(q)\in k^\times/(k^\times)^2$ for its discriminant (when $\mathrm{dim}(q)$ is even), $C_0(q)$ for its even Clifford algebra, and $Q_q\subset \bbP(V)$ for the associated quadric; consult \cite[\S V]{Lam}. Recall from \cite[\S V Thm.~2.4]{Lam} that when $\mathrm{dim}(q)$ is odd, $C_0(q)$ is a central simple $k$-algebra; that when $\mathrm{dim}(q)$ is even and $\delta(q)\notin (k^\times)^2$, $C_0(q)$ is a central simple algebra over its center $k(\sqrt{\delta(q)})$; and that when $\mathrm{dim}(q)$ is even and $\delta(q)\in (k^\times)^2$, $C_0(q)\simeq C_0(q)^+\times C_0(q)^-$ is a product of two isomorphic central simple $k$-algebras. Recall also that $\mathrm{dim}_k(C_0(q))=2^{\mathrm{dim}(q)-1}$ and $\mathrm{dim}(Q_q)=\mathrm{dim}(q)-2$. Finally, when $k$ is formally-real, we will write $\mathrm{sgn}_P(q)\in \bbZ$ for the signature of $q$ with respect to an ordering $P$ of $k$; consult \cite[\S VIII]{Lam}.
\begin{theorem}\label{thm:main}
Let $q$ and $q'$ be two non-degenerate quadratic forms. If $[Q_q]=[Q_{q'}]$, then the following holds:
\begin{itemize}
\item[(i)] We have $\mathrm{dim}(q)=\mathrm{dim}(q')$.
\item[(ii)] We have $(\delta(q) \in (k^\times)^2) \Leftrightarrow (\delta(q')\in (k^\times)^2)$.
\item[(iii)] We have $C_0(q)\simeq C_0(q')$.
\item[(iv)] When $k$ is formally-real, we have $|\mathrm{sgn}_P(q)|=|\mathrm{sgn}_P(q')|$ for every ordering $P$ of $k$.
\end{itemize}
\begin{remark}\label{rk:discriminant}
Note that if $C_0(q)\simeq C_0(q')$, then $\delta(q)=\delta(q')$.
\end{remark}
\end{theorem}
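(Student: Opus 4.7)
My plan is to extract three complementary invariants from the identity $[Q_q]=[Q_{q'}]$ in $K_0\mathrm{Var}(k)$, corresponding to the four conclusions (i)--(iv).

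Part (i) is the easiest: in characteristic zero, Krull dimension descends to a well-defined function on $K_0\mathrm{Var}(k)$ (via Bittner's presentation, or via the virtual Hodge polynomial), so $\dim Q_q=\dim Q_{q'}$ forces $\dim(q)=\dim(q')$.

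The heart of the theorem is (iii), which I would attack via noncommutative motives. I would first invoke (or construct, via Bittner's presentation combined with Orlov's blow-up formula) the ring homomorphism
\[
\mu\colon K_0\mathrm{Var}(k)\longrightarrow K_0(\NChow(k)),\qquad [X]\longmapsto [\perfdg(X)],
\]
whose well-definedness rests on the additivity of noncommutative motives under semiorthogonal decompositions. Kuznetsov's semiorthogonal decomposition of the derived category of a (possibly non-split) quadric then yields
\[
\perf(Q_q)\;=\;\bigl\langle\,\cO,\,\cO(1),\,\ldots,\,\cO(\dim(q)-3),\,\perf(C_0(q))\,\bigr\rangle,
\]
so that $\mu([Q_q])=(\dim(q)-2)\cdot[k]+[C_0(q)]$ in $K_0(\NChow(k))$. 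Combined with (i), this produces the class-level identity $[C_0(q)]=[C_0(q')]$ in $K_0(\NChow(k))$. The principal obstacle, where the bulk of the work must be concentrated, is then to upgrade this class equality to a genuine $k$-algebra isomorphism $C_0(q)\simeq C_0(q')$. For this I would use that the noncommutative-motivic class of an even Clifford algebra determines both its center (an \'etale $k$-algebra of rank $1$ or $2$, recoverable for instance through $HH^0$) and, over each component of the center, its Brauer class (recoverable from refined K-theoretic or cyclic invariants); since two even Clifford algebras of the same dimension sharing center and Brauer class are isomorphic, this would conclude (iii). Part (ii) then follows immediately from Remark~\ref{rk:discriminant}.

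For (iv), I would fix an ordering $P$ of $k$, base change along $k\hookrightarrow k_P$ into the real closure, and apply to $[Q_q\otimes_k k_P]=[Q_{q'}\otimes_k k_P]\in K_0\mathrm{Var}(k_P)$ a realization sensitive to the topology of the real locus (the virtual $\bbF_2$-Poincar\'e polynomial of McCrory--Parusi\'nski would be a natural choice, or the Euler characteristic of real points when supplemented with enough extra data). Because $Q_q$ and $Q_{-q}$ define the same quadric, only the absolute value $|\mathrm{sgn}_P(q)|$ can possibly be visible in $Q_q(k_P)$, which (outside the definite case) is a quotient of a product of two spheres by the diagonal antipodal involution; a careful computation of the chosen invariant on this model, using that $\dim(q)$ is already pinned down by (i), should recover $|\mathrm{sgn}_P(q)|$ and complete the proof.
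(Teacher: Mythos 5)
Your architecture for items (i) and (iii) matches the paper's: dimension is read off from a realization, and item (iii) is attacked via the noncommutative motivic measure $\mu_{\mathrm{nc}}\colon K_0\mathrm{Var}(k)\to K_0(\NChow(k))$ combined with the semiorthogonal decomposition of $\perf(Q_q)$, giving $\mu_{\mathrm{nc}}([Q_q])=(\mathrm{dim}(q)-2)\cdot[U(k)]+[U(C_0(q))]$. The genuine gap sits exactly at the step you flag as the principal obstacle. The equality $[U(C_0(q))]=[U(C_0(q'))]$ in $K_0(\NChow(k))$ only yields an isomorphism $N\!\!M\oplus U(k)^{\oplus d}\oplus U(C_0(q))\simeq N\!\!M\oplus U(k)^{\oplus d}\oplus U(C_0(q'))$ in $\NChow(k)$ for some \emph{unknown} noncommutative Chow motive $N\!\!M$, so the problem is cancellation, not the choice of a refined realization. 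Your proposed mechanism (recover the center via $HH^0$, the Brauer class via K-theoretic or cyclic invariants) does not address this: an additive invariant descends to $K_0(\NChow(k))$ only with values in the $K_0$ of its target, so from the class one recovers at best $\mathrm{dim}_k HH_0(C_0(q))$, not the \'etale algebra $Z(C_0(q))$ and certainly not a Brauer class. The paper's entire Section 3 exists to fill this hole: when $\delta(q)\notin(k^\times)^2$ one cannot appeal to semi-simplicity of $\NNum(k)_\bbQ$ (indeed $U(C_0(q))_{\bbF_2}$ becomes the zero object in $\NNum(k)_{\bbF_2}$), and the cancellation is proved by introducing the intermediate category $\mathrm{NRad}(k)_{\bbF_2}$ of noncommutative radical motives, computing Hom-groups between motives of central simple algebras over quadratic extensions, and running a Hochschild-homology determinant argument that bounds the multiplicity of $U(C_0(q))_{\bbF_2}$ inside $N\!\!M_{\bbF_2}$. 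Without a substitute for this, item (iii) is not proved. A smaller structural point: the paper establishes (ii) \emph{before} (iii) (via semi-simplicity of $\NNum(k)_\bbQ$ and a Hom-computation distinguishing $U(k)_\bbQ^{\oplus 2}$ from $U(k(\sqrt{\delta}))_\bbQ$), because the cancellation argument branches on whether $C_0$ is a product of two central simple $k$-algebras or a central simple algebra over a quadratic extension; deriving (ii) from (iii) as you do is legitimate only once (iii) is actually in hand.

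For item (iv) your route is genuinely different. Both you and the paper base-change to the real closure $k_P$, but you then propose a real realization (virtual $\bbF_2$-Poincar\'e polynomial) evaluated on the model $(S^{m-1}\times S^{n-1})/\pm$; this is plausible over $\bbR$ (the total virtual Betti number of the real locus is $2\min(m,n)=\mathrm{dim}(q)-|\mathrm{sgn}(q)|$), but you leave the computation undone and you would need such an invariant over an arbitrary real closed field, not just $\bbR$. The paper stays motivic: it splits off hyperbolic planes via Rost's decomposition to reduce to anisotropic kernels, and detects isotropy through the cokernel of the degree map on zero-cycles in $\Num(k)$ together with Springer's theorem (cokernel $0$ or $\bbZ/2$ according to isotropy), which works over any real closed field with no recourse to topology.
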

Intuitively speaking, Theorem \ref{thm:main} shows that the dimension, the discriminant, the even Clifford algebra, and the absolute value of the signature, of a quadratic form are preserved by the ``scissor'' relations. Among other ingredients, the proof of items (ii)-(iii), resp. item (iv), makes use of the recent theory of noncommutative motives, resp. of the classical theory of motives\footnote{The first article on the theory of motives was written by Yuri I. Manin \cite{Manin1} in the sixties.}; consult \S\ref{sub:motives}-\S\ref{sub:radical} below. 

\smallskip

Theorem \ref{thm:main} enables the following applications:
\begin{theorem}\label{thm:applications}
Let $q$ and $q'$ be two non-degenerate quadratic forms. If $[Q_q]=[Q_{q'}]$, then the following holds:
\begin{itemize}
\item[(i)] When $\mathrm{dim}(q)\leq4$ (or, equivalently, $\mathrm{dim}(q')\leq4$), we have $Q_q\simeq Q_{q'}$.
\item[(ii)] When $I(k)^3$ is torsion-free, where $I(k)$ stands for the fundamental ideal of the Witt ring of quadratic forms $W(k)$, we have $Q_q\simeq Q_{q'}$. In the case where $k$ is formally-real and $\mathrm{dim}(q)$ is even (or, equivalently, $\mathrm{dim}(q')$ is even), we assume moreover that the Hasse-number $\tilde{u}(k)$ of $k$ is finite.
\end{itemize}
\end{theorem}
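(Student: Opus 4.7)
The plan is to apply Theorem~\ref{thm:main} and Remark~\ref{rk:discriminant} to extract the available invariants of $q$ and $q'$, and then to invoke classical classification results for quadratic forms. Throughout, we use that $Q_q\simeq Q_{q'}$ holds if and only if $q'$ is similar to $q$, \ie $q'\simeq \lambda q$ for some $\lambda\in k^\times$.

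For item~(i), Theorem~\ref{thm:main} combined with Remark~\ref{rk:discriminant} yields $\mathrm{dim}(q)=\mathrm{dim}(q')$, $\delta(q)=\delta(q')$, and $C_0(q)\simeq C_0(q')$. I would then treat the cases $\mathrm{dim}(q)\in\{1,2,3,4\}$ separately: $\mathrm{dim}(q)=1$ is immediate since $Q_q=\emptyset$; for $\mathrm{dim}(q)=2$ the $0$-dimensional quadric $Q_q$ is determined by $\delta(q)$; for $\mathrm{dim}(q)=3$ the conic $Q_q$ is a Severi--Brauer variety classified by the quaternion algebra $C_0(q)$; and for $\mathrm{dim}(q)=4$ the smooth quadric surface $Q_q$ is classified by its even Clifford algebra $C_0(q)$ (whose center recovers $\delta(q)$). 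In every case the relevant classification is classical.

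For item~(ii), Theorem~\ref{thm:main} provides $\mathrm{dim}(q)=\mathrm{dim}(q')$, $\delta(q)=\delta(q')$, $C_0(q)\simeq C_0(q')$, and (when $k$ is formally-real) $|\mathrm{sgn}_P(q)|=|\mathrm{sgn}_P(q')|$ at every ordering $P$. The isomorphism of even Clifford algebras forces the equality of Hasse--Witt (Clifford) invariants, so by Merkurjev's isomorphism $I^2(k)/I^3(k)\isotoo \Br(k)_2$ the class $q\perp(-q')$ lies in $I(k)^3$ inside the Witt ring $W(k)$. The next step is to replace $q$ by a similar form $\lambda q$ whose signatures match those of $q'$ \emph{exactly}, not merely in absolute value. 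When $\mathrm{dim}(q)$ is odd, the choice $\lambda:=\delta(q')/\delta(q)\in k^\times/(k^\times)^2$ gives $\delta(\lambda q)=\delta(q')$, and a short computation with $\mathrm{sgn}_P(\delta(q))=(-1)^{(\mathrm{dim}(q)-\mathrm{sgn}_P(q))/2}$ shows that the sign of $\lambda$ at each $P$ equals $\mathrm{sgn}_P(q')/\mathrm{sgn}_P(q)$ automatically; hence $\mathrm{sgn}_P(\lambda q)=\mathrm{sgn}_P(q')$ for all $P$ and no further hypothesis is required. When $\mathrm{dim}(q)$ is even the discriminant is a similarity invariant and does not constrain $\lambda$, so we instead invoke the SAP property of $k$---guaranteed by $\tilde{u}(k)<\infty$---to produce $\lambda\in k^\times$ with any prescribed sign pattern at the orderings, thereby arranging $\mathrm{sgn}_P(\lambda q)=\mathrm{sgn}_P(q')$.

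Once $\lambda q$ and $q'$ share dimension, discriminant, even Clifford algebra (using the similarity invariance $C_0(\lambda q)\simeq C_0(q)$), and all signatures, the class $\lambda q\perp(-q')$ lies in $I(k)^3$ and has zero total signature. By Pfister's local-global principle the latter makes it torsion in $W(k)$, and the torsion-freeness of $I(k)^3$ then forces it to vanish; combined with the equality of dimensions this upgrades to the isometry $\lambda q\simeq q'$, whence $Q_q\simeq Q_{\lambda q}\simeq Q_{q'}$. The main obstacle I expect is the careful signature bookkeeping across the parity of $\mathrm{dim}(q)$ and the proper invocation of the Elman--Lam--Prestel type classification of quadratic forms over fields with $I(k)^3$ torsion-free.
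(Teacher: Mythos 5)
Your overall strategy coincides with the paper's: the paper obtains Theorem \ref{thm:applications} as the split case of Theorem \ref{thm:applications-1}, whose item (i) is proved exactly as you propose (the classification, in low degree, of forms up to similarity by their even Clifford algebras, via \cite[\S15.A--B]{Book-inv}), and whose item (ii) is proved by citing Lewis--Tignol's Theorem B \cite{LewisT}, which for split algebras is precisely the statement that, when $I(k)^3$ is torsion-free (and $\tilde{u}(k)<\infty$ in the formally-real even-dimensional case), non-degenerate quadratic forms are classified up to similarity by $C_0$ together with the absolute values of the signatures. Your treatment of item (i) is therefore essentially identical to the paper's and is fine.

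For item (ii) you attempt to re-derive this classification rather than cite it, and the sketch has a genuine gap in the even-dimensional case with $\delta(q)\notin(k^\times)^2$. The assertion that ``the isomorphism of even Clifford algebras forces the equality of Hasse--Witt (Clifford) invariants'' fails there: writing $l:=k(\sqrt{\delta(q)})$, one has $[C_0(q)]=[C(q)\otimes_k l]$ in $\Br(l)$, so $C_0(q)\simeq C_0(q')$ only yields $[C(q)]-[C(q')]\in\Br(l/k)$, i.e. $[C(q)]=[C(q')]+(\delta,x_0)$ for some $x_0\in k^\times$, not equality. Since $c(\lambda q)=c(q)+(\lambda,\pm\delta)$, the similarity factor $\lambda$ must \emph{simultaneously} absorb this Brauer-class discrepancy and realize the prescribed sign pattern at every ordering; these two constraints on $\lambda$ interact, and resolving them is exactly where the hypothesis $\tilde{u}(k)<\infty$ (effective diagonalization, not merely SAP) enters the Lewis--Tignol argument. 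Your sketch treats the Clifford-invariant matching as automatic and the signature matching as an independent SAP step, so as written the even-dimensional nontrivial-discriminant case does not close. (Your odd-dimensional computation with $\lambda=\det(q')/\det(q)$ is correct, and the remaining Merkurjev--Pfister--torsion-freeness argument is standard once all invariants of $\lambda q$ and $q'$ agree.) The cleanest repair is to do what the paper does: invoke the Elman--Lam/Lewis--Tignol classification as a black box instead of reproving it.
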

\begin{remark}
\begin{itemize}
\item[(a)] When $k$ is not formally-real, the Witt ring $W(k)$ is torsion; consult \cite[\S5]{Book-quadrics}. Consequently, $I(k)^3$ is torsion-free if and only if $I(k)^3=0$.
\item[(b)] All the assumptions of Theorem \ref{thm:applications} hold when $k$ is a local or global field, or a field of transcendence degree $\leq 1$ over a real-closed field or over an algebraically closed field; consult \cite[\S V-\S VI]{Book-quadrics}.
\end{itemize}
\end{remark}
Item (i) of Theorem \ref{thm:applications} shows that, in dimensions $\leq 2$, two quadrics have the same Grothendieck class if and only if they are isomorphic! In other words, we have the following inclusion:
\begin{equation}\label{eq:inclusion}
\frac{\{\text{Quadrics}\,\,Q_q\,|\,\mathrm{dim}(Q_q)\leq2\}}{\text{isomorphism}} \subset K_0\mathrm{Var}(k) \,.
\end{equation}
\begin{remark}[Quaternion algebras]\label{rk:quaternions}
The assignment $(a,b) \mapsto C(a,b):=(-ax^2 -by^2 + abu^2 =0)\subset \bbP^2$ induces a bijection between the set of quaternion $k$-algebras up to isomorphism and the set of quadrics in $\bbP^2$ up to isomorphism (a.k.a. conics). In the same vein, the assignment $(a,b) \mapsto (x^2 - ay^2 - bu^2 +(ab)w^2=0)\subset \bbP^3$ induces a bijection between the set of quaternion $k$-algebras up to isomorphism and the set of quadrics in $\bbP^3$ with trivial discriminant up to isomorphism. 
\end{remark}
\begin{example}[Quadrics of dimension $\leq 2$ over $\bbR$]
When $k=\bbR$, there are two quadrics in $\bbP^2$ up to isomorphism, namely $C(1,1)\simeq \bbP^1$ and $C({\bf H})$, where ${\bf H}:=(-1,-1)$ stands for Hamilton's $\bbR$-algebra of quaternions. In $\bbP^3$ there are three quadrics up to isomorphism, namely $(x^2-y^2 - u^2 + w^2=0)\subset \bbP^3$, $(x^2+y^2 + u^2 + w^2=0)\subset \bbP^3$ and $(x^2+y^2 + u^2 - w^2=0)\subset \bbP^3$. Making use of the above inclusion \eqref{eq:inclusion}, we hence conclude that the Grothendieck classes of these quadrics remain distinct in $K_0\mathrm{Var}(\bbR)$. Note that since $\bbR^\times/(\bbR^\times)^2\simeq \{\pm 1\}$, the discriminant of the quadric $(x^2+y^2 + u^2 - w^2=0)\subset \bbP^3$ is non-trivial.
\end{example}
\begin{example}[Quadrics of dimension $\leq 2$ over $\bbQ_p$]\label{ex:adic}
When $k=\bbQ_p$, with $p\neq 2$, there are two quadrics in $\bbP^2$ up to isomorphism, namely $C(1,1)\simeq \bbP^1$ and $C(\epsilon, p)$, where $\epsilon$ is a(ny) unit of $\bbQ_p^\times$ such that $\overline{\epsilon}$ is not a square in $(\bbZ/p\bbZ)^\times$. In $\bbP^3$ there are six quadrics up to isomorphism:
\begin{eqnarray}
(x^2 -  y^2 - u^2 + w^2=0)\subset \bbP^3  && (x^2 - \epsilon y^2 - pu^2 + (\epsilon p) w^2=0)\subset \bbP^3 \label{eq:quadric1}\\
(x^2 - y^2 - u^2 + \epsilon w^2=0)\subset \bbP^3 && (x^2 - y^2 - u^2 + p w^2=0)\subset \bbP^3 \label{eq:quadric2}\\
(x^2 - y^2 - u^2 + (\epsilon p) w^2=0)\subset \bbP^3 && (x^2 + y^2 - \epsilon u^2 - p w^2=0)\subset \bbP^3 \,.\label{eq:quadric3}
\end{eqnarray}
Making use of the above inclusion \eqref{eq:inclusion}, we hence conclude that the Grothendieck classes of these quadrics remain distinct in $K_0\mathrm{Var}(\bbQ_p)$. Note that since $\bbQ_p^\times/(\bbQ_p^\times)^2 = \{1, \epsilon, p, \epsilon p\}$, the discriminant of the quadrics \eqref{eq:quadric2}-\eqref{eq:quadric3} is non-trivial.
\end{example}
\begin{example}[Quadrics of dimension $\leq 2$ over $\bbQ$]
When $k=\bbQ$, there are infinitely many quadrics in $\bbP^2$ up to isomorphism. More specifically, following Remark \ref{rk:quaternions}, there is a bijection between the set of quaternion $\bbQ$-algebras up to isomorphism and the set of those positive integers which are not squares; consult \cite[\S III-IV]{Serre}. Under such bijection, the prime numbers $p$ which are congruent to $3$ modulo $4$ correspond to the quaternions algebras $(-1,-p)$. Consequently, we have, for example, the {\em infinite} family of {\em non-isomorphic} quadrics $\{C(-1,-p)\}_{p \equiv 3 (\mathrm{mod}\,4)}$. In the same vein, we have, for example, the {\em infinite} family of {\em non-isomorphic} quadrics with trivial discriminant $\{(x^2 + y^2 + p u^2 + p w^2 =0)\subset \bbP^3\}_{p \equiv 3 (\mathrm{mod}\,4)}$. Note that we have also, for example, the {\em infinite} family of {\em non-isomorphic} quadrics with non-trivial discriminant $\{(x^2+ y^2 + u^2 + p u^2=0)\subset \bbP^3\}_p$ parametrized by all prime numbers $p$. Making use of the above inclusion \eqref{eq:inclusion}, we hence conclude that the Grothendieck classes of all the aforementioned quadrics remain distinct in $K_0\mathrm{Var}(\bbQ)$.
\end{example}
Item (ii) of Theorem \ref{thm:applications} shows that when $I(k)^3$ is torsion-free (and $\tilde{u}(k)<\infty$), two quadrics have the same Grothendieck class if and only if they are isomorphic! Consequently, the above inclusion \eqref{eq:inclusion} admits the following extension to all dimensions:
\begin{eqnarray}\label{eq:inclusion2}
\frac{\{\text{Quadrics}\,\,Q_q\}}{\text{isomorphism}} \subset K_0\mathrm{Var}(k) && \mathrm{with}\,\,I(k)^3\,\,\mathrm{torsion}\text{-}\mathrm{free}\,\,\,(\mathrm{and}\,\,\tilde{u}(k)<\infty)\,.
\end{eqnarray}
\begin{example}[Quadrics over $\bbR$]
When $k=\bbR$, there are $\lfloor\frac{n+3}{2}\rfloor$ quadrics in $\bbP^n$ up to isomorphism, namely the following family $\{(x^2_1 + \cdots + x^2_i - x^2_{i+1} - \cdots - x^2_{n+1}=0) \subset \bbP^n\}_i$, with $\frac{n+1}{2} \leq i \leq n+1$ when $n$ is odd, and with $\lfloor\frac{n+3}{2} \rfloor \leq i \leq n+1$ when $n$ is even. Making use of the above inclusion \eqref{eq:inclusion2}, we hence conclude that the Grothendieck classes of these quadrics remain distinct in $K_0\mathrm{Var}(\bbR)$.
\end{example}
\begin{example}[Quadrics over $\bbQ_p$]
When $k=\bbQ_p$, with $p\neq 2$, there are finitely many quadrics in $\bbP^n$ up to isomorphism. More specifically, since the $u$-invariant $u(\bbQ_p)$ of $\bbQ_p$ is equal to $4$ (consult \cite[\S VI]{Book-quadrics}), there are two, resp. six, quadrics in $\bbP^n$ up to isomorphism when $n$ is even, resp. odd. The corresponding quadratic forms (well-defined up to similarity) are obtained by taking the orthogonal sum of the corresponding quadratic forms of Example \ref{ex:4} with a finite number of copies of the hyperbolic plane. Making use of the inclusion \eqref{eq:inclusion2}, we hence conclude that the Grothendieck classes of these quadrics remain distinct in $K_0\mathrm{Var}(\bbQ_p)$.
\end{example}
\begin{example}[Quadrics over $\bbQ$]
When $k=\bbQ$, there are infinitely many quadrics in $\bbP^n$ up to isomorphism. Note that the assignment $m \mapsto (\mathrm{sign}(m), \{r_p\}_{p})$, where $m=\pm \prod_{p\,\mathrm{prime}}p^{r_p}$ is the prime factorization, gives rise to a group isomorphism $\bbQ^\times/(\bbQ^\times)^2 \simeq (\{\pm 1\}\times \oplus_{p} \bbZ)/(\{1\} \times \oplus_{p} 2\bbZ)$. Consequently, when $n$ is odd, we have, for example, the following {\em infinite} family of {\em non-isomorphic} quadrics $\{(x^2_1 + \cdots + x^2_n + \lambda x^2_{n+1}=0) \subset \bbP^n\}_\lambda$ parametrized by the square classes $\lambda \in \bbQ^\times/(\bbQ^\times)^2$. Making use of the above inclusion \eqref{eq:inclusion2}, we hence conclude that the Grothendieck classes of all these quadrics remain distinct in $K_0\mathrm{Var}(\bbQ)$.
\end{example}
\subsection*{Statement of results - involution varieties}
Let $k$ be a field of characteristic zero. Given a central simple $k$-algebra $A$, let us write $\mathrm{deg}(A)$ for its degree and $\mathrm{SB}(A)$ for the associated Severi-Brauer variety. In the same vein, given a central simple $k$-algebra with involution of orthogonal type $(A,\ast)$, let us write $\delta(A,\ast) \in k^\times/(k^\times)^2$ for its discriminant (when $\mathrm{deg}(A)$ is even), $C_0(A,\ast)$ for its even Clifford $k$-algebra, and $\mathrm{Iv}(A,\ast)\subset \mathrm{SB}(A)$ for the associated involution variety; consult \cite[\S II]{Book-inv} \cite{Tao}. Recall from \cite[Thm.~8.10]{Book-inv} that when $\mathrm{deg}(A)$ is odd, $C_0(A,\ast)$ is a central simple $k$-algebra; that when $\mathrm{deg}(A)$ is even and $\delta(A,\ast)\notin (k^\times)^2$, $C_0(A,\ast)$ is a central simple algebra over its center $k(\sqrt{\delta(A,\ast)})$; and that when $\mathrm{deg}(A)$ is even and $\delta(A,\ast)\in (k^\times)^2$, $C_0(A,\ast)\simeq C_0(A,\ast)^+\times C_0(A,\ast)^-$ is a product of two central simple $k$-algebras. Recall also that $\mathrm{dim}_k(C_0(A,\ast))=2^{\mathrm{deg}(A)-1}$ and $\mathrm{dim}(\mathrm{Iv}(A,\ast))=\mathrm{deg}(A)-2$. Finally, when $k$ is formally-real, we will write $\mathrm{sgn}_P(A,\ast) \in \bbN$ for the signature of $(A,\ast)$ with respect to an ordering $P$ of $k$; consult \cite[\S 11]{Book-inv}.

\begin{example}[Quadrics]\label{ex:1}
In the particular case where $A$ is split, the central simple $k$-algebra with involution of orthogonal type $(A,\ast)$ becomes isomorphic to $(\mathrm{M}_{\mathrm{deg}(A)\times \mathrm{deg}(A)}(k), \ast_q)$, where $\ast_q$ is the adjoint involution of a uniquely determined quadratic form $q$ (up to similarity). Hence, in this particular case, the involution variety $\mathrm{Iv}(A,\ast)$ reduces to the quadric $Q_q$. Moreover, $\mathrm{deg}(A)=\mathrm{dim}(q)$, $\delta(A,
\ast)=\delta(q)$ and $C_0(A,\ast)=C_0(q)$. Furthermore, when $k$ is formally-real, $\mathrm{sgn}_P(A,\ast)$ reduces to $|\mathrm{sgn}_P(q)|$.
\end{example}
\begin{example}[Odd dimensional involution varieties]\label{ex:2}
Given a central simple $k$-algebra $A$ of odd degree, it is well-known that $A$ admits an involution of orthogonal type if and only if $A$ is split. Thanks to Example \ref{ex:1}, this shows that the odd dimensional involution varieties are the odd dimensional quadrics.
\end{example}
\begin{example}[Forms of quadrics]\label{ex:3}
Following Example \ref{ex:1}, note that every involution variety $\mathrm{Iv}(A,\ast)$ becomes isomorphic to a quadric after extension of scalars to a splitting field of $A$. Hence, involution varieties may be understood as ``forms of quadrics''. Moreover, as explained in \cite[\S2]{Tao}, the involution varieties admit the following characterization: a smooth projective $k$-variety $X$ of dimension $n-2$ is an involution variety if and only if $X\times_k \overline{k}$ is isomorphic to the (unique) quadric $(x_1^2 + \cdots + x^2_n=0)\subset \bbP_{\overline{k}}^{n-1}$. Furthermore, two involution varieties $\mathrm{Iv}(A,\ast)$ and $\mathrm{Iv}(A',\ast')$ are isomorphic if and only if the central simple $k$-algebras with involution of orthogonal type $(A,\ast)$ and $(A',\ast')$ are isomorphic.
\end{example}

\begin{example}[Products of two conics]\label{ex:4}
When $\mathrm{deg}(A)=4$ and $\delta(A,\ast)\in (k^\times)^2$, we have an isomorphism $(A,\ast) \simeq (Q_1, \ast_1) \otimes (Q_2, \ast_2)$, where $Q_1=(a_1,b_1)$ and $Q_2=(a_2,b_2)$ are uniquely determined quaternion $k$-algebras (up to isomorphism) and $\ast_1$ and $\ast_2$ are the conjugation involutions of $Q_1$ and $Q_2$, respectively. Following \cite[Thm.~4.15]{Tao}, this leads to an isomorphism $\mathrm{Iv}(A,\ast) \simeq C(a_1, b_1) \times C(a_2, b_2)$. This shows that the involution varieties of dimension $2$ with trivial discriminant are the products of two conics. Note that in the particular case where $A$ is split, we have $Q_1=Q_2=(a,b)$. Consequently, we conclude from Remark \ref{rk:quaternions} that $(x^2 - ay^2 - bu^2 + (ab)w^2=0) \simeq C(a,b) \times C(a,b)$ for every quaternion $k$-algebra $(a,b)$.
\end{example}
Example \ref{ex:4} motivates the following notation:
\begin{notation}[Condition $(\star)$]
Let $(A,\ast)$ be a central simple $k$-algebra with involution of orthogonal type, with $\mathrm{deg}(A)=4$ and $\delta(A,\ast) \in (k^\times)^2$. We will say that $(A,\ast)$ satisfies {\em condition $(\star)$} if $A$ is split. 
\end{notation}
\begin{theorem}\label{thm:main-1}
Let $(A,\ast)$ and $(A',\ast')$ be two central simple $k$-algebras with involutions of orthogonal type. If $[\mathrm{Iv}(A,\ast)]=[\mathrm{Iv}(A',\ast')]$, then the following holds:
\begin{itemize}
\item[(i)] We have $\mathrm{deg}(A)=\mathrm{deg}(A')$.
\item[(ii)] We have $(\delta(A,\ast)\in (k^\times)^2) \Leftrightarrow (\delta(A',\ast')\in (k^\times)^2)$.
\item[(iii)] We have $C_0(A,\ast) \simeq C_0(A',\ast')$.
\item[(iv)] We have $A \simeq A'$
\item[(v)] When $k$ is formally-real, we have $\mathrm{sgn}_P(A,\ast)=\mathrm{sgn}_P(A',\ast')$ for every ordering $P$ of $k$.
\end{itemize}
In the particular case where $\mathrm{deg}(A)=4$ and $\delta(A,\ast)\in (k^\times)^2$ (or, equivalently, $\mathrm{deg}(A')=4$ and $\delta(A',\ast')\in (k^\times)^2$), we assume moreover in items (iii)-(v) that $(A,\ast)$ and $(A',\ast')$ satisfy condition $(\star)$.
\end{theorem}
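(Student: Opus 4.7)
The plan is to run the argument in close parallel with the proof of Theorem~\ref{thm:main}, using as a replacement of the Kapranov semi-orthogonal decomposition of $\mathrm{perf}(Q_q)$ its involution-theoretic generalisation (due to Blunk) for $\mathrm{perf}(\mathrm{Iv}(A,\ast))$. Writing $d:=\mathrm{deg}(A)$, the summands of this decomposition are the tensor powers $A^{\otimes i}$ with $0\leq i\leq d-3$, together with the even Clifford algebra $C_0(A,\ast)$. Passing to the universal additive invariant $U(-)$ yields an isomorphism
\[
U(\mathrm{Iv}(A,\ast))\;\simeq\;\Big(\bigoplus_{i=0}^{d-3}U(A^{\otimes i})\Big)\,\oplus\,U(C_0(A,\ast))
\]
in the category of noncommutative motives, and similarly for $(A',\ast')$; Example~\ref{ex:1} guarantees that this specialises to the decomposition underlying the proof of Theorem~\ref{thm:main} when $A$ is split. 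Item~(i) is then immediate: the dimension descends to a well-defined function on $K_0\mathrm{Var}(k)$ and $\mathrm{dim}(\mathrm{Iv}(A,\ast))=d-2$.

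For items (ii)--(iv), exactly as in the proof of Theorem~\ref{thm:main}, the equality $[\mathrm{Iv}(A,\ast)]=[\mathrm{Iv}(A',\ast')]$ is to be promoted to an isomorphism $U(\mathrm{Iv}(A,\ast))\simeq U(\mathrm{Iv}(A',\ast'))$. Matching the two Blunk-type decompositions via a Krull--Schmidt argument --- available because noncommutative motives of (products of) central simple algebras form a semisimple additive subcategory in which isomorphism classes are detected by the centre together with the Brauer class --- forces $U(A^{\otimes i})\simeq U({A'}^{\otimes i})$ for every $i$ and $U(C_0(A,\ast))\simeq U(C_0(A',\ast'))$. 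Specialising to $i=1$ and using that $\mathrm{deg}(A)=\mathrm{deg}(A')$ from item~(i) yields $A\simeq A'$ (item~(iv)); combining the second isomorphism with the known dimension $\dim_k C_0(A,\ast)=2^{d-1}$ yields item~(iii); and item~(ii) then follows from~(iii) by inspecting whether the centre of $C_0(-,-)$ is $k$ or a quadratic extension, as recalled before the statement.

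For item~(v), one lifts the noncommutative motivic input to classical Chow motives with rational coefficients and applies the Betti realisation associated with an ordering $P$ of $k$; the signature $\mathrm{sgn}_P(A,\ast)$ is encoded in the resulting integral Hodge structure and is hence an invariant of the Grothendieck class. The main obstacle is the Krull--Schmidt matching of summands together with the recovery of the Brauer class $[A]$ from $U(A)$; this is precisely what forces condition $(\star)$ in the exceptional case $d=4$, $\delta(A,\ast)\in(k^\times)^2$, where by Example~\ref{ex:4} the involution variety degenerates into a product of two conics and the Blunk-type decomposition degenerates accordingly, so that the individual Brauer classes of the two quaternion factors cannot be disentangled from $U(\mathrm{Iv}(A,\ast))$ without the additional hypothesis that $A$ is split.
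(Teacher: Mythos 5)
Your overall skeleton for items (i)--(iv) --- decompose $U(\mathrm{Iv}(A,\ast))$ into copies of $U(k)$, $U(A)$ and $U(C_0(A,\ast))$ and match summands against the decomposition for $(A',\ast')$ --- is indeed the paper's strategy (the decomposition used there, from the theory of homogeneous varieties, is exactly your Blunk-type one after identifying $U(A^{\otimes i})$ with $U(k)$ or $U(A)$ via Brauer equivalence). But the step you describe as ``promoted to an isomorphism \dots via a Krull--Schmidt argument'' is precisely where the real work lies, and as stated it is a gap. First, the equality $[\mathrm{Iv}(A,\ast)]=[\mathrm{Iv}(A',\ast')]$ in $K_0\mathrm{Var}(k)$ only yields $N\!\!M\oplus U(\mathrm{Iv}(A,\ast))\simeq N\!\!M\oplus U(\mathrm{Iv}(A',\ast'))$ for some unknown noncommutative Chow motive $N\!\!M$; since $\NChow(k)$ is not Krull--Schmidt, cancelling $N\!\!M$ is not free. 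Second, your claimed detection principle is false with integral coefficients: by Theorem \ref{thm:Brauer1}, $\oplus_i U(A_i)\simeq\oplus_j U(A'_j)$ only forces the Brauer classes to match $p$-primary-part by $p$-primary-part, with a possibly \emph{different} permutation for each prime, so summands cannot simply be matched one-to-one. Third, when $\delta(A,\ast)\notin(k^\times)^2$ the summand $U(C_0(A,\ast))$ is the motive of a central simple algebra over a quadratic extension $l/k$, which falls outside the existing cancellation result (Proposition \ref{prop:cancellation-0}); the paper has to prove a new cancellation theorem for this case (Theorem \ref{thm:cancellation}), whose proof is genuinely delicate --- it introduces the category of noncommutative radical motives with $\bbF_2$-coefficients (numerical motives fail here because $U(A)_{\bbF_2}$ vanishes in $\NNum(k)_{\bbF_2}$) and uses a Hochschild-homology determinant argument to bound multiplicities. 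Finally, even after the matching, condition $(\star)$ enters not merely as a heuristic about ``disentangling quaternion factors'' but through explicit Brauer-group relations between $[C_0^{\pm}(A,\ast)]$ and $[A]$, which fail to pin down the classes exactly when $n=1$, $\mathrm{deg}(A)\equiv 0 \pmod 4$ and $\delta(A,\ast)\in(k^\times)^2$.

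Item (v) as you propose it does not work. The Betti realisation of $\mathfrak{h}(Q_q)_\bbQ$ (or its integral Hodge structure) is computed from the complex points and is the same for all quadrics of a given dimension; the signature is a real-algebraic invariant invisible to it (over $\bbR$, the anisotropic and split quadrics of equal dimension have isomorphic Hodge realisations). The paper instead extends scalars to the real closure $k_P$, uses item (iv) to reduce to quadrics over a real-closed field, and then proves that $[Q_q]=[Q_{q'}]$ forces $|\mathrm{sgn}(q)|=|\mathrm{sgn}(q')|$ by an induction that peels off hyperbolic planes (Lemma \ref{lem:decomposition}) and detects isotropy versus anisotropy through the cokernel of the degree map on zero-cycles in $\Num(k)$, i.e., through Springer's theorem. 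Some replacement for your Hodge-theoretic step along these lines is unavoidable.
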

\begin{remark}
Note that if $C_0(A,\ast) \simeq C_0(A',\ast)$, then $\delta(A,\ast)=\delta(A',\ast')$.
\end{remark}
\begin{remark}[Severi-Brauer varieties]
It is well-known that two central simple $k$-algebras $A$ and $A'$ are isomorphic if and only if the associated Severi-Brauer varieties $\mathrm{SB}(A)$ and $\mathrm{SB}(A')$ are isomorphic. Consequently, item (iv) of Theorem \ref{thm:main-1} shows that if two involution varieties $\mathrm{Iv}(A,\ast) \subset \mathrm{SB}(A)$ and $\mathrm{Iv}(A',\ast') \subset \mathrm{SB}(A')$ have the same Grothendieck class, then the corresponding ``ambient'' Severi-Brauer varieties $\mathrm{SB}(A)$ and $\mathrm{SB}(A')$ are necessarily isomorphic!
\end{remark}
Theorem \ref{thm:main-1} enables the following applications:
\begin{theorem}\label{thm:applications-1}
Let $(A,\ast)$ and $(A',\ast')$ be two central simple $k$-algebras with involutions of orthogonal type. If $[\mathrm{Iv}(A,\ast)]=[\mathrm{Iv}(A',\ast')]$, then the following holds:
\begin{itemize}
\item[(i)] When $\mathrm{deg}(A)\leq4$ (or, equivalently, $\mathrm{deg}(A')\leq 4$), we have $\mathrm{Iv}(A,\ast)\simeq \mathrm{Iv}(A',\ast')$.
\item[(ii)] When $I(k)^3$ is torsion-free, we have $\mathrm{Iv}(A,\ast)\simeq \mathrm{Iv}(A',\ast')$. In the case where $k$ is formally-real and $\mathrm{deg}(A)$ is even (or, equivalently, $\mathrm{deg}(A')$ is even), we assume moreover that $\tilde{u}(k)< \infty$.
\end{itemize}
In the particular case where $\mathrm{deg}(A)=4$ and $\delta(A,\ast)\in (k^\times)^2$ (or, equivalently, $\mathrm{deg}(A')=4$ and $\delta(A',\ast')\in (k^\times)^2$), we assume moreover in items (i)-(ii) that $(A,\ast)$ and $(A',\ast')$ satisfy condition $(\star)$.
\end{theorem}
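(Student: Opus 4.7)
The plan is to deduce Theorem \ref{thm:applications-1} from Theorem \ref{thm:main-1} by combining it with classical classification results for central simple $k$-algebras with involution of orthogonal type. The key observation is that, by Example \ref{ex:3}, two involution varieties $\mathrm{Iv}(A,\ast)$ and $\mathrm{Iv}(A',\ast')$ are isomorphic if and only if the pairs $(A,\ast)$ and $(A',\ast')$ are isomorphic as central simple $k$-algebras with involution of orthogonal type. Hence the whole theorem reduces to showing $(A,\ast)\simeq (A',\ast')$ under the stated hypotheses, and Theorem \ref{thm:main-1} already supplies the key invariants: $\mathrm{deg}(A)=\mathrm{deg}(A')$, $A\simeq A'$, $\delta(A,\ast)=\delta(A',\ast')$, $C_0(A,\ast)\simeq C_0(A',\ast')$, and (in the formally-real case) $\mathrm{sgn}_P(A,\ast)=\mathrm{sgn}_P(A',\ast')$ for every ordering $P$.

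For item (i), I would argue by cases on the common degree $d=\mathrm{deg}(A)$. The cases $d=1$ and $d=2$ are trivial, since $\mathrm{Iv}(A,\ast)$ is then $0$-dimensional and entirely encoded by its Grothendieck class. When $d=3$, Example \ref{ex:2} forces $A$ and $A'$ to be split, so that $\mathrm{Iv}(A,\ast)$ and $\mathrm{Iv}(A',\ast')$ are $1$-dimensional quadrics and the conclusion follows from Theorem \ref{thm:applications}(i). When $d=4$ with trivial discriminant, condition $(\star)$ again forces $A$ (and hence $A'$) to be split, and we conclude as in the previous case via Theorem \ref{thm:applications}(i) applied to the corresponding products of two conics of Example \ref{ex:4}. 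The remaining subcase $d=4$ with non-trivial discriminant is the one where the reduction to quadrics breaks down: there I would invoke the classical classification of degree-$4$ orthogonal involutions (Knus-Merkurjev-Rost-Tignol, \emph{Book of Involutions}, \S15), which shows that $(A,\ast)$ is determined up to isomorphism by the underlying algebra $A$ and its even Clifford algebra $C_0(A,\ast)$ viewed over its center $k(\sqrt{\delta(A,\ast)})$, both of which agree for $(A,\ast)$ and $(A',\ast')$ by Theorem \ref{thm:main-1}.

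For item (ii), I would pass to the Morita equivalent description of $\ast$: writing $A=M_r(D)$ for a division $k$-algebra $D$ with involution $\overline{\phantom{x}}$ of the first kind, the involution $\ast$ is the adjoint of an $\epsilon$-hermitian form $h$ over $(D,\overline{\phantom{x}})$, and likewise $\ast'$ is the adjoint of a hermitian form $h'$ over the same $(D,\overline{\phantom{x}})$ (using $A\simeq A'$). The question becomes whether $h$ and $h'$ are similar. Under the torsion-freeness of $I(k)^3$ (plus the Hasse-number hypothesis in the formally-real even-degree case), the Hasse-Witt-type classification of hermitian forms due to Bayer-Fluckiger, Parimala and Lewis-Tignol asserts that $h$ is determined up to similarity by its rank, discriminant, even Clifford invariant and signatures: precisely the data secured by Theorem \ref{thm:main-1}. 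The split case of this argument recovers Theorem \ref{thm:applications}(ii).

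The main obstacle I expect to encounter lies in the non-split portions of both items. In item (i), the degree-$4$ case with non-trivial discriminant requires genuine use of a classification of orthogonal involutions, and the delicate point is checking that the abstract isomorphism $C_0(A,\ast)\simeq C_0(A',\ast')$ produced by Theorem \ref{thm:main-1} is compatible with the canonical involution that the classification theorem requires. In item (ii), the analogous subtlety is to translate the invariants produced at the level of motives (Grothendieck classes, even Clifford algebra, signatures) into the standard arithmetic invariants of the hermitian form $h$ over $(D,\overline{\phantom{x}})$, so that the classification theorem of Bayer-Fluckiger-Parimala-Lewis-Tignol can be applied without further input.
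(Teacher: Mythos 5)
Your proposal follows essentially the same route as the paper: both items are deduced from the invariants supplied by Theorem \ref{thm:main-1} together with classification theorems for orthogonal involutions --- the degree-$\leq 4$ classification via the even Clifford algebra from \cite[\S 15]{Book-inv} for item (i), and the Lewis--Tignol theorem (which is the involution-level form of the hermitian-form classification you cite) for item (ii) --- combined with the fact that $\mathrm{Iv}(A,\ast)\simeq \mathrm{Iv}(A',\ast')$ if and only if $(A,\ast)\simeq(A',\ast')$. The only caveat is that your appeal to Theorem \ref{thm:applications}(i) in the split subcases of item (i) is circular in the paper's logical order (Theorem \ref{thm:applications} is obtained there as a special case of Theorem \ref{thm:applications-1}), but since those subcases are themselves instances of the same \S 15 classification, this is immaterial.
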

Note that Theorems \ref{thm:main-1} and \ref{thm:applications-1} are far reaching generalizations of Theorems \ref{thm:main} and \ref{thm:applications}, respectively. Consequently, the above inclusions \eqref{eq:inclusion} and \eqref{eq:inclusion2} admit extensions to the realm of involution varieties. In particular, the above Example \ref{ex:adic} admits the following extension:
\begin{example}[Involution varieties of dimension $\leq 2$ over $\bbQ_p$]\label{ex:inv-2}
As mentioned in Example \ref{ex:2}, there is no difference between involution varieties of dimension $1$ and quadrics of dimension $1$. Hence, we address solely the $2$-dimensional case. First, recall from \cite[\S XII-XIII]{Serre2} that given any local field $l$, there is a unique division quaternion $l$-algebra ${\bf D}$ up to isomorphism; when $l=\bbQ_p$, we have ${\bf D}\simeq(\epsilon,p)$. As explained in \cite[\S15.B]{Book-inv}, the assignment $(A,\ast) \mapsto C_0(A,\ast)$ gives rise to a bijection between the set of central simple $\bbQ_p$-algebras of degree $4$ with involution of orthogonal type up to isomorphism and the set of quaternion algebras over some \'etale quadratic extension of $\bbQ_p$ up to isomorphism. The inverse bijection is induced by the assignment $Q \mapsto (N_{l/\bbQ_p}(Q), N_{l/\bbQ_p}(\ast))$, where $l$ is the center of $Q$, $\ast$ is the conjugation involution of $Q$, and $N_{l/k}(-)$ is the norm construction. Consequently, since $\bbQ^\times_p/(\bbQ_p^\times)^2= \{1, \epsilon, p, \epsilon p\}$, there are  nine involution varieties of dimension $2$ over $\bbQ_p$ up to isomorphism. Note that under the assignment $\mathrm{Iv}(A,\ast) \mapsto (A,\ast) \mapsto C_0(A,\ast)$, the involution varieties \eqref{eq:quadric1}-\eqref{eq:quadric3} correspond to the following quaternion algebras:
\begin{eqnarray*}
(1,1) \times (1,1) \,\,\mathrm{over}\,\,\bbQ_p \times \bbQ_p &&  (\epsilon,p) \times (\epsilon,p) \,\,\mathrm{over}\,\,\bbQ_p \times \bbQ_p \\
(1,1)\,\,\mathrm{over}\,\,\bbQ_p(\sqrt{\epsilon}) &&  (1,1)\,\,\mathrm{over}\,\,\bbQ_p(\sqrt{p}) \\
(1,1)\,\,\mathrm{over}\,\,\bbQ_p(\sqrt{\epsilon p}) &&  {\bf D} \,\,\mathrm{over}\,\,\bbQ_p(\sqrt{\epsilon p})\,.
\end{eqnarray*}
Therefore, in addition to \eqref{eq:quadric1}-\eqref{eq:quadric3}, we can also consider the following two involution varieties:
\begin{eqnarray}\label{eq:involutions}
\mathrm{Iv}(N_{\bbQ_p(\sqrt{\epsilon})/\bbQ_p}({\bf D}), N_{\bbQ_p(\sqrt{\epsilon})/\bbQ_p}(\ast)) && \mathrm{Iv}(N_{\bbQ_p(\sqrt{p})/\bbQ_p}({\bf D}), N_{\bbQ_p(\sqrt{p})/\bbQ_p}(\ast))\,.
\end{eqnarray}
Making use of item (i) of Theorem \ref{thm:applications-1}, we hence conclude that the Grothendieck classes of the eight involution varieties, namely \eqref{eq:quadric1}-\eqref{eq:quadric3} and \eqref{eq:involutions}, remain distinct in $K_0\mathrm{Var}(\bbQ_p)$.
\end{example}
The following result of Koll{\'a}r\footnote{Consult also the subsequent work \cite{Hogadi}.} \cite{Kollar} explains why condition $(\star)$ is needed in Theorems \ref{thm:main-1} and \ref{thm:applications-1}:
\begin{theorem}[Products of two conics]\label{thm:Kollar}
Let $Q_1$ and $Q_2$, resp. $Q'_1$ and $Q'_2$, be quaternion $k$-algebras and $C(Q_1)$ and $C(Q_2)$, resp. $C(Q'_1)$ and $C(Q'_2)$, the associated conics. The following conditions are equivalent:
\begin{itemize}
\item[(a)] $[C(Q_1) \times C(Q_2)]= [C(Q'_1)\times C(Q'_2)]$
\item[(b)] $C(Q_1) \times C(Q_2) \,\,\mathrm{is}\,\,\mathrm{birational}\,\,\mathrm{to}\,\,C(Q'_1) \times C(Q'_2)$
\item[(c)] $\langle [Q_1],[Q_2]\rangle = \langle [Q'_1],[Q'_2]\rangle$ in the Brauer group $\mathrm{Br}(k)$.
\end{itemize}
\end{theorem}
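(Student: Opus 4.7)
My plan is to prove the three equivalences by establishing the cycle $(a) \Rightarrow (c) \Rightarrow (b) \Rightarrow (a)$, with $(b) \Leftrightarrow (c)$ handled directly via Amitsur's theorem.

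For $(b) \Leftrightarrow (c)$, I would invoke Amitsur's theorem: the kernel of the restriction map $\mathrm{Br}(k) \to \mathrm{Br}(k(C(Q)))$ is the cyclic subgroup $\langle [Q] \rangle$. Applying this iteratively to $X = C(Q_1) \times C(Q_2)$, viewing $k(X) = k(C(Q_1))(C(Q_2)_{k(C(Q_1))})$, one computes that $\ker(\mathrm{Br}(k) \to \mathrm{Br}(k(X))) = \langle [Q_1], [Q_2] \rangle$. Since this kernel depends only on the birational class of $X$, this yields $(b) \Rightarrow (c)$. Conversely, assuming $(c)$, a case analysis on the common subgroup $G = \langle [Q_1], [Q_2]\rangle$, which is one of $0$, $\bbZ/2$, or $(\bbZ/2)^2$, constructs explicit birational equivalences between the two products by pivoting one factor at a time through the function field of the other, using that a conic with Brauer class in a given cyclic subgroup becomes $\bbP^1$ over the corresponding function field.

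For $(a) \Rightarrow (c)$, I would extract the subgroup $G$ as an invariant of the Grothendieck class via Chow motives. The Chow motive $M(C(Q))$ of a conic splits as $\mathbb{Q} \oplus T_Q$, where $T_Q$ is a ``twisted Tate motive'' whose isomorphism class detects the Brauer class $[Q] \in \mathrm{Br}(k)[2]$ (by Karpenko's decomposition of Severi-Brauer motives). Consequently, the motive of $X$ decomposes into summands indexed by $G$, and the isomorphism class of this motive pins down $G$. The Grothendieck class determines this motivic data via the noncommutative-motivic machinery used in the proof of Theorem \ref{thm:main-1}, which then forces equality of the subgroups.

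The main obstacle will be the implication $(c) \Rightarrow (a)$: knowing that $\langle [Q_1], [Q_2]\rangle = \langle [Q'_1],[Q'_2]\rangle$ (and hence, by the first step, that the two products are birational) does not immediately yield equality of Grothendieck classes, since blow-ups and blow-downs in a weak factorization of a birational map change the class by multiples of the Lefschetz class. I would address this by constructing explicit scissor-relation decompositions of $C(Q_1) \times C(Q_2)$ and $C(Q'_1) \times C(Q'_2)$ into locally closed strata that match term by term, refining the birational equivalences from $(c) \Rightarrow (b)$ into honest stratifications whose residual contributions cancel. This delicate geometric bookkeeping -- ensuring that compatible stratifications can be produced in each case of $G$ -- is the crux of the argument.
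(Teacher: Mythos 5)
First, a point of orientation: the paper does not prove Theorem \ref{thm:Kollar} at all --- it is quoted verbatim from Koll\'ar's article \emph{Conics in the Grothendieck ring} [Kollar], so there is no internal proof to compare against. Judged on its own terms, your proposal is a reasonable outline of the logical skeleton but leaves the two hard implications unproved. For (a)$\Rightarrow$(c), the mechanism you propose does not work as stated: with rational coefficients the Chow motive of \emph{any} conic is $\mathbf{1}\oplus\bbL$ (a conic always carries a degree-$2$ zero-cycle), so the summand ``$T_Q$'' cannot detect $[Q]$; with integral coefficients the motive of an anisotropic conic is indecomposable, and even granting a decomposition of $\mathfrak{h}(C(Q_1)\times C(Q_2))$ indexed by $G=\langle[Q_1],[Q_2]\rangle$, passing from equality of Grothendieck classes to isomorphism of motives requires a cancellation theorem of the type the paper labors to establish in Section 3, which you do not supply. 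The standard (and Koll\'ar's) route is much shorter: by Bittner/Larsen--Lunts, in characteristic zero $K_0\mathrm{Var}(k)/(\bbL)$ is free on stable birational classes of smooth projective connected varieties, so (a) forces the two products to be stably birational, and the Amitsur kernel $\ker(\mathrm{Br}(k)\to\mathrm{Br}(k(X)))$, which you correctly compute to be $G$, is a stable birational invariant. (Your (b)$\Rightarrow$(c) computation via Amitsur is correct.)

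The more serious gap is (c)$\Rightarrow$(a), which you yourself identify as the crux and then defer to ``delicate geometric bookkeeping'' without exhibiting it --- but that bookkeeping \emph{is} the theorem. The missing input is concrete: a smooth conic bundle with a section is a $\bbP^1$-bundle (project from the section), so the diagonal gives $C\times C\simeq\bbP(E)$ over $C$ and hence the identity $[C]^2=(1+\bbL)[C]$ in $K_0\mathrm{Var}(k)$; more generally, for the elementary move replacing the generating pair $\{[Q_1],[Q_2]\}$ by $\{[Q_1],[Q_1]+[Q_2]\}$ one must show $[C(Q_1)][C(Q_2)]=[C(Q_1)][C(Q'')]$ whenever $[Q'']=[Q_1]+[Q_2]$ is again a quaternion class, using that $C(Q_2)$ and $C(Q'')$ become isomorphic over the residue field of \emph{every} point of $C(Q_1)$ (every closed point of a conic splits its quaternion algebra), so that the two conic bundles over $C(Q_1)$ have matching strata. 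One then reduces an arbitrary equality of subgroups $G$ to a chain of such elementary moves. Without these identities the implication (c)$\Rightarrow$(a) is not established; note also that any argument for (a)$\Rightarrow$(c) must be coarse enough not to recover the unordered pair $\{[Q_1],[Q_2]\}$ itself, since Example \ref{ex:conicsR} shows that this finer invariant is \emph{not} determined by the Grothendieck class.
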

\begin{example}[Products of two conics over $\bbR$]\label{ex:conicsR}
Let $k=\bbR$, $Q_1={\bf H}$, $Q_2=(1,1)$, and $Q'_1=Q'_2={\bf H}$. Following Example \ref{ex:4}, let $(A,\ast):=(Q_1,\ast_1) \otimes (Q_2,\ast_2)$ and $(A',\ast'):=(Q'_1,\ast'_1) \otimes (Q'_2,\ast'_2)$. Note that $(A',\ast') \simeq (\mathrm{M}_{4\times 4}(\bbR), \ast'_{q'})$, where $\ast'_{q'}$ is the adjoint involution of the quadratic form $q'=\langle 1, 1, 1, 1\rangle$. On the one hand, since $\langle [{\bf H}], [(1,1)]\rangle=\langle [{\bf H}],[{\bf H}]\rangle$ in the Brauer group $\mathrm{Br}(\bbR)$, Theorem \ref{thm:Kollar} implies that $[\mathrm{Iv}(A,\ast)]=[C({\bf H})\times \bbP^1]=[C({\bf H})\times C({\bf H})]=[\mathrm{Iv}(A',\ast')]$. On the other hand, the even Clifford algebra $C_0(A,\ast) \simeq {\bf H} \times (1,1)$ is {\em not} isomorphic to $C_0(A',\ast')\simeq {\bf H} \times {\bf H}$, the central simple $\bbR$-algebra $A\simeq \mathrm{M}_{2\times 2}({\bf H})$ is {\em not} isomorphic to $A'\simeq \mathrm{M}_{4\times 4}(\bbR)$, the signature $\mathrm{sgn}(A,\ast)=0$ is {\em different} from $\mathrm{sgn}(A',\ast') = |\mathrm{sgn}(\langle 1, 1, 1, 1\rangle)|= 4$, and the involution variety $C({\bf H}) \times \bbP^1$ is {\em not} isomorphic to $C({\bf H}) \times C({\bf H})$. This shows, in particular, that the above Theorems \ref{thm:main-1} and \ref{thm:applications-1} are {\em false} without assuming condition $(\star)$.
\end{example}
Finally, note that by combining Koll\'ar's Theorem \ref{thm:Kollar} with item (i) of Theorem \ref{thm:applications-1}, we obtain a complete description of the Grothendieck classes of {\em all} the involution varieties of dimension $2$ (over any base field $k$). Here is one illustrative example:
\begin{example}[Grothendieck classes of the involution varieties of dimension $2$ over $\bbQ_p$]
As explained in Example \ref{ex:inv-2}, there are nine involution varieties of dimension $2$ over $\bbQ_p$, with $p\neq 2$, up to isomorphism. Eight of them were described in Examples \ref{ex:adic} and \ref{ex:inv-2}, namely \eqref{eq:quadric1}-\eqref{eq:quadric3} and \eqref{eq:involutions}, and these have distinct Grothendieck classes. The remaining involution variety (which has trivial discriminant) is the following one $\mathrm{Iv}(((\epsilon,p),\ast)\otimes ((1,1),\ast) \simeq C(\epsilon,p))\times C(1,1) \simeq C(\epsilon, 1) \times \bbP^1$. Following Example \ref{ex:4}, note that the right-hand side of \eqref{eq:quadric1} is isomorphic to the product $C(\epsilon, p) \times C(\epsilon, p)$. Since $\langle [(\epsilon,p)], [(\epsilon,p)]\rangle=\langle [(\epsilon,p)],[(1,1)]\rangle$ in the Brauer group $\mathrm{Br}(\bbQ_p)$, Theorem \ref{thm:Kollar} implies that $[C(\epsilon,p)\times C(\epsilon,p)]=[C(\epsilon,p)\times \bbP^1]$. Consequently, we conclude that the nine involution varieties of dimension $2$ give rise to eight distinct Grothendieck classes. 
\end{example}
\section{Preliminaries}\label{sec:preliminaries}
Throughout the article $k$ denotes a base field of characteristic zero, and we will write $G:=\mathrm{Gal}(\overline{k}/k)$ for its absolute Galois group. Given a central simple $k$-algebra $A$, we will write $\mathrm{ind}(A)$ for its index and $[A]$ for its class in the Brauer group $\mathrm{Br}(k)$. Recall that $\mathrm{Br}(k) \simeq \oplus_{p\,\mathrm{prime}} \mathrm{Br}(k)\{p\}$, where $\mathrm{Br}(k)\{p\}$ stands for the $p$-power torsion subgroup. Finally, in order to simplify the exposition, we will write $\mathrm{pt}$ instead of $\mathrm{Spec}(k)$.
\subsection{Dg categories}\label{sub:dg}
Throughout the article, we will assume some basic familiarity with the language of dg categories; consult, for example, Keller's survey \cite{ICM-Keller}. We will write $\dgcat(k)$ for the category of (small) dg categories and $\dgcat_{\mathrm{sp}}(k)$ for the full subcategory of smooth proper dg categories in the sense of Kontsevich \cite{Miami,finMot,IAS}. Examples of smooth proper dg categories include, for example, the finite-dimensional $k$-algebras of finite global dimension $A$ as well as the dg categories of perfect complexes $\perf_\dg(X)$ associated to smooth proper $k$-schemes $X$. As explained in \cite[\S1.7]{book}, the symmetric monoidal category $(\dgcat_{\mathrm{sp}}(k), \otimes)$ is rigid\footnote{Recall that a symmetric monoidal category is called {\em rigid} if all its objects are dualizable.}, with the dual of a smooth proper dg category $\cA$ being its opposite dg category $\cA^\op$.
\subsection{Chow motives}\label{sub:motives}
Given a commutative ring of coefficients $R$, recall from \cite[\S4.1]{Andre}\cite{Manin1} the definition of the category of {\em Chow motives} $\Chow(k)_R$. This category is $R$-linear, rigid symmetric monoidal and idempotent complete. Moreover, it comes equipped with a symmetric monoidal functor $\mathfrak{h}(-)_R \colon \mathrm{SmProj}(k) \to \Chow(k)_R$ defined on the category of smooth projective $k$-schemes. The Chow motive $\mathfrak{h}(\bbP^1)_R$ of the projective line $\bbP^1$ decomposes into a direct sum $\mathfrak{h}(\mathrm{pt})_R \oplus R(-1)$. The direct summand $R(-1)$ is called the {\em Lefschetz motive} and its $\otimes$-inverse $R(1)$ the {\em Tate motive}. Given a smooth projective $k$-scheme $X$ and an integer $i \in \mathbb{Z}$, we will write $\mathfrak{h}(X)_R(i)$ instead of $\mathfrak{h}(X)_R\otimes R(1)^{\otimes i}$. Under these notations, given two smooth projective $k$-schemes $X$ and $Y$ and two integers $i, j \in \bbZ$, we have an isomorphism
$$ \mathrm{Hom}_{\Chow(k)_R}(\mathfrak{h}(X)_R(i),\mathfrak{h}(Y)_R(j))\simeq \cZ^{\mathrm{dim}(X)-i+j}_{\sim \mathrm{rat}}(X\times Y)_R\,,$$
where the right-hand side stands for the $R$-module of algebraic cycles on $X\times Y$ of codimension $\mathrm{dim}(X) - i + j$ up to rational equivalence. Finally, in the particular case where $R=\bbZ$, we will write $\Chow(k)$ instead of $\Chow(k)_\bbZ$ and $\mathfrak{h}(-)$ instead of $\mathfrak{h}(-)_\bbZ$.

\subsection{Noncommutative Chow motives}\label{sub:NCmotives}
Given a commutative ring of coefficients $R$, recall from \cite[\S4.1]{book} the definition of the category of {\em noncommutative Chow motives} $\NChow(k)_R$. This category is $R$-linear, rigid symmetric monoidal, idempotent complete, and comes equipped with a symmetric monoidal functor $U(-)_R \colon \dgcat(k)_{\mathrm{sp}} \to \NChow(k)_R$. Given smooth proper dg categories $\cA$ and $\cA'$, we have an isomorphism 
$$\Hom_{\NChow(k)_R}(U(\cA)_R, U(\cA')_R)\simeq K_0(\cA^\op \otimes \cA')_R\,,$$
where the right-hand side stands for the $R$-linearized Grothendieck group of $\cA^\op \otimes \cA'$. Moreover, the composition law on $\NChow(k)$ is induced by the (derived) tensor product of bimodules, and the identity of $U(\cA)_R$ is the Grothendieck class of the diagonal $\cA\text{-}\cA$-bimodule $\cA$. In the particular case where $R=\bbZ$, we will write $\NChow(k)$ instead of $\NChow(k)_\bbZ$ and $U(-)$ instead of $U(-)_\bbZ$.
\begin{theorem}{(see \cite[Thm.~9.1]{Homogeneous})}\label{thm:Brauer}
Given two central simple $k$-algebras $A$ and $A'$, we have the equivalence:
\begin{eqnarray*}
U(A) \simeq U(A')\,\,\mathrm{in}\,\,\NChow(k) & \Leftrightarrow & [A]=[A'] \,\,\mathrm{in}\,\,\mathrm{Br}(k)\,.
\end{eqnarray*}
\end{theorem}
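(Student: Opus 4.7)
The plan is to handle the two directions separately.

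For $(\Leftarrow)$: if $[A]=[A']$ in $\mathrm{Br}(k)$, then $A$ and $A'$ are Morita equivalent, so there exist integers $n,m\geq 1$ with $M_n(A)\simeq M_m(A')$ as $k$-algebras. Since $U(-)$ inverts Morita equivalences (in particular $U(M_n(\cA))\simeq U(\cA)$ for any smooth proper dg category $\cA$), this gives $U(A)\simeq U(M_n(A))\simeq U(M_m(A'))\simeq U(A')$ in $\NChow(k)$.

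For $(\Rightarrow)$: the plan is first to reduce to the split case via the rigid monoidal structure of $\NChow(k)$. Writing $d:=\deg(A)$, the identity $U(\cA)^\vee=U(\cA^{\op})$ together with $U(A\otimes_k A^{\op})\simeq U(M_{d^2}(k))\simeq U(k)$ (Morita) shows that $U(A)$ is $\otimes$-invertible with dual $U(A^{\op})$. Consequently, any isomorphism $U(A)\simeq U(A')$ yields $U(A\otimes_k A'^{\op})\simeq U(A)\otimes U(A'^{\op})\simeq U(A')\otimes U(A'^{\op})\simeq U(k)$. Setting $B:=A\otimes_k A'^{\op}$, for which $[B]=[A]-[A']$ in $\mathrm{Br}(k)$, it suffices to prove that $U(B)\simeq U(k)$ forces $B$ to be split.

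For this reduced statement, the strategy is a direct computation of a composition of $K_0$-classes. Given mutually inverse $\alpha\colon U(B)\to U(k)$ and $\beta\colon U(k)\to U(B)$ in $\NChow(k)$, write $\alpha=n$ and $\beta=m$ in $K_0(B^{\op})\simeq K_0(B)\simeq\mathbb{Z}$, whose generators are the classes of the unique simple right and left $B$-modules $S_R$ and $S_L$. Writing $B\simeq M_s(D)$ with $D$ the underlying division algebra and $j:=\mathrm{ind}(B)$, the composition $\alpha\circ\beta\colon U(k)\to U(k)$ is represented, via the (derived) tensor product of bimodules, by $nm$ copies of $S_R\otimes_B S_L$ in $K_0(k)\simeq\mathbb{Z}$. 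A standard Morita computation (via the $B$-balanced pairing $(r,c)\mapsto rc$ on row and column vectors over $D$) identifies $S_R\otimes_B S_L\simeq D$, so this composition has value $nm\cdot\dim_k(D)=nm\,j^2$; equating with $\mathrm{id}_{U(k)}=1\in K_0(k)$ yields $nm\,j^2=1$, which forces $j=1$ and therefore $[B]=0$ in $\mathrm{Br}(k)$. The main subtlety is pinning down the factor $j^2$, which precisely encodes the obstruction to $B$ being split at the level of Grothendieck groups; note that an equivalent route is to apply the symmetric monoidal base-change functor $-\otimes_k\overline{k}\colon\NChow(k)\to\NChow(\overline{k})$ and use that the induced map $K_0(B)\to K_0(B\otimes_k\overline{k})$ is multiplication by $j$, leading to the same equation $nm\,j^2=1$ after the Morita identification $U(M_{\deg(B)}(\overline{k}))\simeq U(\overline{k})$.
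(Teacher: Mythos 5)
The paper does not prove this statement---it is recalled verbatim from \cite[Thm.~9.1]{Homogeneous}---but your argument is correct and is essentially the standard one: the $(\Leftarrow)$ direction is Morita invariance of $U(-)$, and the $(\Rightarrow)$ direction comes down to the fact that composing a morphism $U(k)\to U(B)$ with a morphism $U(B)\to U(k)$ multiplies the corresponding integers by $\mathrm{ind}(B)^2$, so that $nm\,\mathrm{ind}(B)^2=1$ forces $B$ to be split. This is exactly the pairing the paper records in Remark \ref{ex:key} and exploits again in Lemma \ref{lem:key2}; your Morita computation $S\otimes_B T\simeq D$ is precisely the source of the $\mathrm{ind}(B)^2$ factor there.
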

\begin{theorem}{(see \cite[Thm.~2.20(iv)]{Separable})}\label{thm:Brauer1}
Given two families of central simple $k$-algebras $\{A_i\}_{1\leq i \leq n}$ and $\{A'_j\}_{1\leq j \leq m}$, we have an isomorphism $\oplus_{i=1}^n U(A_i) \simeq \oplus_{j=1}^m U(A_j)$ in $\NChow(k)$ if and only if $n=m$ and for every prime number $p$ there exists a permutation $\sigma_p$ such that $[A'_i]^p=[A_{\sigma_p(i)}]^p$ in $\mathrm{Br}(k)\{p\}$ for every $i$.
\end{theorem}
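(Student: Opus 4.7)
The plan is to reduce the question to one prime at a time, using the decomposition $\mathrm{Br}(k) \simeq \bigoplus_{p} \mathrm{Br}(k)\{p\}$, and then to establish a Krull--Schmidt type theorem in a $p$-local version of $\NChow(k)$.

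First I would pass to the $\bbZ_{(p)}$-linearization $\NChow(k)_{\bbZ_{(p)}}$ and prove a $p$-local refinement of Theorem \ref{thm:Brauer}: for central simple $k$-algebras $A$ and $A'$, one has $U(A)_{\bbZ_{(p)}} \simeq U(A')_{\bbZ_{(p)}}$ if and only if $[A]^p = [A']^p$ in $\mathrm{Br}(k)\{p\}$. This rests on the Hom-computation $\Hom_{\NChow(k)_{\bbZ_{(p)}}}(U(A)_{\bbZ_{(p)}},U(A')_{\bbZ_{(p)}}) \simeq K_0(A^{\op} \otimes A')_{\bbZ_{(p)}}$ together with the standard fact that $K_0$ of a central simple algebra is a rank-one free abelian group generated by a simple module, so that the index pairing governing composition becomes invertible away from $p$ precisely when the prime-to-$p$ parts of the relevant Brauer classes are controlled.

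The heart of the argument is a Krull--Schmidt theorem for the full additive subcategory $\cC_p \subseteq \NChow(k)_{\bbZ_{(p)}}$ generated by the motives $U(A)_{\bbZ_{(p)}}$ of central simple $k$-algebras. I would show that the endomorphism rings $\End_{\cC_p}(U(A)_{\bbZ_{(p)}})$ are local $\bbZ_{(p)}$-algebras (in fact, quotients of discrete valuation rings), which forces unique decomposition of objects into indecomposables up to permutation of the summands. This is where I expect the main obstacle to lie: one must simultaneously control idempotent completeness and locality of the endomorphism rings in an additive category whose objects have a priori distinct indices, and ensure that the resulting Krull--Schmidt conclusion is compatible with the natural generating set indexed by Brauer classes.

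Granting Krull--Schmidt in each $\cC_p$, the ``only if'' direction follows immediately: $p$-localizing the isomorphism $\oplus_i U(A_i) \simeq \oplus_j U(A'_j)$ and applying the refinement of Theorem \ref{thm:Brauer} yields $n=m$ together with, for each prime $p$, a permutation $\sigma_p$ satisfying $[A'_i]^p = [A_{\sigma_p(i)}]^p$ in $\mathrm{Br}(k)\{p\}$. The converse is obtained by assembling, across all primes, the $p$-local isomorphisms into an integral isomorphism of direct sums; here the direct-sum structure is essential, for it lets one use distinct permutations $\sigma_p$ at distinct primes — a flexibility that is absent in the single-summand statement of Theorem \ref{thm:Brauer} but fits naturally with the prime-by-prime decomposition of the Brauer group.
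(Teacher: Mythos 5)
The paper itself offers no proof of this statement --- it is imported verbatim from \cite[Thm.~2.20(iv)]{Separable} --- so the comparison can only be with the cited argument. Judged on its own terms, your ``only if'' direction is essentially sound: after $\bbZ_{(p)}$-linearization one has $\End(U(A)_{\bbZ_{(p)}})\simeq \bbZ_{(p)}$, a local ring, and the composition pairing $\Hom(U(A),U(A'))\times\Hom(U(A'),U(A))\to\End(U(A))$ is $(\alpha,\beta)\mapsto\alpha\beta\,\mathrm{ind}(A^\op\otimes A')^2$ (Remark~\ref{ex:key} with $H=H'=G$); since index and period have the same prime divisors, this becomes a unit in $\bbZ_{(p)}$ exactly when $[A]^p=[A']^p$ --- note the condition is on the $p$-primary parts, not the ``prime-to-$p$ parts'' as you write. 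The Krull--Schmidt--Azumaya exchange argument then needs only that every summand has local endomorphism ring; locality forces every nonzero idempotent of $\End(U(A)_{\bbZ_{(p)}})$ to be the identity, so the idempotent-completeness issue you flag as the main obstacle is not actually where the difficulty lies.

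The genuine gap is the converse. Isomorphism after localization at every prime does \emph{not} imply integral isomorphism in a $\bbZ$-linear additive category --- this is precisely the failure of the local--global principle for genera of lattices --- so ``assembling the $p$-local isomorphisms'' is a restatement of the problem, not an argument. Concretely, one must exhibit integer matrices $M=(m_{ij})$ and $N=(n_{ji})$, with $m_{ij}\in\Hom(U(A'_j),U(A_i))\simeq\bbZ$, whose twisted products (structure constants built from the indices $\mathrm{ind}(A_i^\op\otimes A'_j)$) equal identity matrices. Already for $n=2$ with $[A_1]=\alpha_p+\alpha_q$, $[A_2]=0$, $[A'_1]=\alpha_p$, $[A'_2]=\alpha_q$ (where $\alpha_p$, $\alpha_q$ are nonzero classes of $p$- and $q$-power order, $p\neq q$), the hypothesis holds with $\sigma_p=\id$ and $\sigma_q$ the transposition, yet no diagonal (permutation-matrix) isomorphism exists, since no two of the four Brauer classes coincide and Theorem~\ref{thm:Brauer} would force equality. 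The required isomorphism must instead exploit that the composites $U(A_1)\to U(A'_j)\to U(A_1)$ are multiplication by the pairwise coprime integers $\mathrm{ind}(A_1^\op\otimes A'_j)^2$, via a B\'ezout/CRT construction in the twisted matrix ring that also kills the off-diagonal terms. That construction is the real content of the cited theorem and is entirely absent from your sketch.
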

\begin{remark}[Central simple algebras over field extensions]\label{ex:key}
Let $l/k$, resp. $l'/k$, be a finite Galois field extension and $A$, resp. $A'$, a central simple $l$-algebra, resp. central simple $l'$-algebra. Let us denote by $H\subseteq G$, resp. $H'\subseteq G$, the (unique) subgroup such that $\overline{k}^H=l$, resp. $\overline{k}^{H'}=l'$. Given a commutative ring of coefficients $R$, recall from \cite[Thm.~2.13]{Separable} that $\Hom_{\NChow(k)_R}(U(A)_R, U(A')_R)$ can be identified with the $R$-module $\mathrm{Map}^G(G/H\times G/H', R)$ of $G$-invariant maps from the finite $G$-set $G/H\times G/H'$ (equipped with the diagonal $G$-action) to $R$. Under these identifications, the composition map
$$ \Hom_{\NChow(k)_R}(U(A)_R,U(A')_R) \times  \Hom_{\NChow(k)_R}(U(A')_R,U(A)_R) \too \Hom_{\NChow(k)_R}(U(A)_R,U(A)_R)$$
corresponds to the bilinear pairing
$$ \mathrm{Map}^G(G/H \times G/H', R) \times \mathrm{Map}^G(G/H' \times G/H, R) \too \mathrm{Map}^G(G/H \times G/H, R)$$
that sends a pair of $G$-invariant maps $(\alpha,\beta)$ to the following $G$-invariant map:
$$ (\overline{g_1},\overline{g_3}) \mapsto \sum_{\overline{g_2} \in G/H'}\alpha(\overline{g_1}, \overline{g_2}) \beta(\overline{g_2},\overline{g_3})\mathrm{ind}\big((A\otimes_{\overline{k}^H}\overline{k}^{(H \cap H')})^\op\otimes_{\overline{k}^{(H \cap H')}}(A'\otimes_{\overline{k}^{H'}}\overline{k}^{(H \cap H')})\big)^2\,.$$ 
Moreover, the identity of $U(A)_R$ corresponds to the $G$-invariant map $G/H \times G/H \to R$ with $1$ in the diagonal and $0$ elsewhere.
\end{remark}
\subsection{Numerical motives}
Given an additive rigid symmetric monoidal category $\cC$, recall from \cite[\S7]{Kahn} that its {\em $\cN$-ideal} is defined as follows
$$ \cN(a,b):=\{f \in \Hom_\cC(a,b)\,|\, \forall g \in \Hom_\cC(b,a)\,\,\mathrm{we}\,\,\mathrm{have}\,\,\mathrm{tr}(g\circ f)=0 \}\,,$$
where $\mathrm{tr}(g\circ f)$ is the categorical trace of $g\circ f$. Via the adjunction isomorphism $\Hom_\cC(a,b)\simeq \Hom_\cC({\bf 1}, a^\vee\otimes b)$, where ${\bf 1}$ stands for the $\otimes$-unit and $a^\vee$ for the dual of $a$, the $\cN$-ideal can also be described as follows:
\begin{equation*}
\cN(a,b)=\{f \in \Hom_\cC({\bf 1},a^\vee \otimes b)\,|\, \forall g \in \Hom_\cC(a^\vee \otimes b,{\bf 1})\,\,\mathrm{we}\,\,\mathrm{have}\,\,g\circ f=0 \}\,.
\end{equation*}
Given a commutative ring of coefficients $R$, recall from \cite[\S4.1]{Andre}\cite{Manin1} that the category of {\em numerical motives} $\Num(k)_R$ is defined as the idempotent completion of the quotient of $\Chow(k)_R$ by the $\otimes$-ideal $\cN$. This category is $R$-linear, rigid symmetric monoidal and idempotent complete. Moreover, given two smooth projective $k$-schemes $X$ and $Y$ and two integers $i, j \in \bbZ$, we have an isomorphism
$$ \mathrm{Hom}_{\mathrm{Num}(k)_R}(\mathfrak{h}(X)_R(i),\mathfrak{h}(Y)_R(j)) \simeq \cZ_{\sim \mathrm{num}}^{\mathrm{dim}(X) - i + j}(X\times Y)_R\,,$$
where the right-hand side stands for the $R$-module of algebraic cycles up to numerical equivalence. 
\subsection{Noncommutative numerical motives}\label{sub:numerical}
Given a commutative ring of coefficients $R$, recall from \cite[\S4.6]{book} that the category of {\em noncommutative numerical motives} $\NNum(k)_R$ is defined as the idempotent completion of the quotient of $\NChow(k)_R$ by the $\otimes$-ideal $\cN$. This category  is $R$-linear, rigid symmetric monoidal and idempotent complete. 
\subsection{Noncommutative radical motives}\label{sub:radical}
Given an additive category $\cC$, its {\em $\cR$-ideal} is defined as follows:
\begin{equation*}
\cR(a,b):= \{f\in \Hom_\cC(a,b)\,|\, \forall g \in \Hom_\cC(b,a)\,\,\mathrm{the}\,\,\mathrm{endomorphism}\,\,\id_a - g\circ f\,\,\mathrm{is}\,\,\mathrm{invertible} \}\,.\end{equation*}
Given a commutative ring of coefficients $R$, the category of {\em noncommutative radical motives} $\mathrm{NRad}(k)_R$ is defined as the idempotent completion of the quotient of $\NChow(k)_R$ by the ideal $\cR$. By construction, this category is $R$-linear, additive, and idempotent complete. 
\subsection{Motivic measures}
Let $K_0(\Num(k))$ and $K_0(\NChow(k))$ be the Grothendieck rings of the additive symmetric monoidal categories $\Num(k)$ and $\NChow(k)$, respectively. The following motivic measures will be used throughout the article:
\begin{proposition}{(see \cite[Cor.~13.2.2.1]{Andre})}\label{prop:measure1}
The assignment $X \mapsto \mathfrak{h}(X)$, with $X$ a smooth projective $k$-scheme, gives rise to a motivic measure $\mu_{\mathrm{c}}\colon K_0\mathrm{Var}(k) \to K_0(\Num(k))$.
\end{proposition}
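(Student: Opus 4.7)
The plan is to factor this motivic measure through Chow motives and invoke Bittner's presentation of $K_0\mathrm{Var}(k)$ in characteristic zero. Since $\mathrm{Num}(k)$ is a quotient of $\mathrm{Chow}(k)$, it suffices to construct a ring homomorphism $K_0\mathrm{Var}(k) \to K_0(\mathrm{Chow}(k))$ sending $[X]$ to $[\mathfrak{h}(X)]$ for $X$ smooth projective, and then post-compose with the homomorphism $K_0(\mathrm{Chow}(k)) \to K_0(\mathrm{Num}(k))$ induced by the symmetric monoidal projection.

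First I would recall Bittner's theorem: in characteristic zero, $K_0\mathrm{Var}(k)$ is generated by classes $[X]$ of smooth projective $k$-schemes, subject to $[\emptyset]=0$ together with the blow-up relations
\begin{equation*}
[\mathrm{Bl}_Y X] - [E] = [X] - [Y]\,,
\end{equation*}
where $Y \subset X$ is a smooth closed subvariety of a smooth projective $k$-scheme $X$ and $E$ denotes the exceptional divisor of the blow-up $\mathrm{Bl}_Y X \to X$. This presentation is proved by combining weak factorization of birational maps with the scissor relations, and it is precisely what allows us to only specify the measure on smooth projective varieties.

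Next I would define $\mu_c[X] := [\mathfrak{h}(X)]$ for smooth projective $X$, and verify the two families of relations. The empty-set relation is immediate. For the blow-up relation, I would invoke Manin's blow-up formula in $\mathrm{Chow}(k)$: if $Y \subset X$ is a smooth closed subvariety of codimension $c$, then
\begin{equation*}
\mathfrak{h}(\mathrm{Bl}_Y X) \simeq \mathfrak{h}(X) \oplus \bigoplus_{i=1}^{c-1} \mathfrak{h}(Y)(-i)\,, \qquad \mathfrak{h}(E) \simeq \bigoplus_{i=0}^{c-1} \mathfrak{h}(Y)(-i)\,,
\end{equation*}
where the second decomposition is the projective bundle formula applied to $E = \mathbb{P}(N_{Y/X}) \to Y$. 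Passing to Grothendieck classes, the tail $\bigoplus_{i=1}^{c-1}\mathfrak{h}(Y)(-i)$ cancels and one obtains $[\mathfrak{h}(\mathrm{Bl}_Y X)] - [\mathfrak{h}(E)] = [\mathfrak{h}(X)] - [\mathfrak{h}(Y)]$ in $K_0(\mathrm{Chow}(k))$, hence \emph{a fortiori} in $K_0(\mathrm{Num}(k))$. Multiplicativity is automatic since $\mathfrak{h}(X \times Y) \simeq \mathfrak{h}(X) \otimes \mathfrak{h}(Y)$ and the Grothendieck ring of an additive symmetric monoidal category is, by construction, a ring under $\otimes$.

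The main obstacle is really conceptual rather than computational: it lies in establishing Bittner's presentation (which depends on characteristic zero via weak factorization) and in invoking Manin's blow-up formula in the Chow category. Once those two ingredients are available, the verification is the short calculation above; the rest of the argument is formal. Since both ingredients are classical and recorded in \cite{Andre}, the statement follows.
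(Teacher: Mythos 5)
Your proof is correct. The paper does not prove this proposition itself --- it simply cites \cite[Cor.~13.2.2.1]{Andre} --- and your route (Bittner's presentation of $K_0\mathrm{Var}(k)$ in characteristic zero, Manin's blow-up formula together with the projective bundle formula for the exceptional divisor, and then post-composition with the symmetric monoidal quotient $\Chow(k)\to\Num(k)$) is exactly the standard argument underlying that reference, with the characteristic-zero hypothesis you flag being precisely the paper's standing assumption.
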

\begin{proposition}{(see \cite[Prop.~4.1]{Tits})}\label{prop:measure}
The assignment $X \mapsto U(\perf_\dg(X))$, with $X$ a smooth projective $k$-scheme, gives rise to a motivic measure $\mu_{\mathrm{nc}}\colon K_0\mathrm{Var}(k) \to K_0(\NChow(k))$.
\end{proposition}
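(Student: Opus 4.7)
The plan is to leverage Bittner's presentation of $K_0\mathrm{Var}(k)$, which holds since $k$ has characteristic zero. Explicitly, $K_0\mathrm{Var}(k)$ is the free abelian group on isomorphism classes $[X]$ of smooth projective $k$-varieties, modulo $[\emptyset]=0$ and the blow-up relation
$$[\mathrm{Bl}_Z X] - [E] = [X] - [Z]$$
for every closed immersion $Z \hookrightarrow X$ of smooth projective varieties (with $E$ the exceptional divisor), the ring structure being induced by products. Hence, to construct the motivic measure $\mu_{\mathrm{nc}}$, it suffices to check that the assignment $X \mapsto U(\perf_\dg(X))$ is well-defined on isomorphism classes of smooth projective varieties, is multiplicative, and respects the blow-up relation.

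Well-definedness on isomorphism classes is clear. Multiplicativity follows from the Morita equivalence $\perf_\dg(X \times_k Y) \simeq \perf_\dg(X) \otimes \perf_\dg(Y)$, which yields $U(\perf_\dg(X \times Y)) \simeq U(\perf_\dg(X)) \otimes U(\perf_\dg(Y))$ in $\NChow(k)$ and hence the required equality in $K_0(\NChow(k))$; note that the unit $[\mathrm{pt}]$ is sent to the $\otimes$-unit $U(k)$. The crux is the blow-up relation. Letting $c = \mathrm{codim}_X Z$, Orlov's blow-up formula (promoted to the dg enhancement level) gives a semiorthogonal decomposition
$$\perf_\dg(\mathrm{Bl}_Z X) = \langle \perf_\dg(X), \perf_\dg(Z), \ldots, \perf_\dg(Z) \rangle$$
with $c-1$ copies of $\perf_\dg(Z)$, while the projective bundle formula applied to the $\bbP^{c-1}$-bundle $E \to Z$ yields a semiorthogonal decomposition of $\perf_\dg(E)$ into $c$ copies of $\perf_\dg(Z)$. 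Since $U$, as the universal additive invariant, sends semiorthogonal decompositions of smooth proper dg categories to direct sums in $\NChow(k)$, subtraction in $K_0(\NChow(k))$ produces
$$[U(\perf_\dg(\mathrm{Bl}_Z X))] - [U(\perf_\dg(E))] = [U(\perf_\dg(X))] - [U(\perf_\dg(Z))],$$
as required.

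The main obstacle is technical rather than conceptual: ensuring that the two semiorthogonal decompositions above, which are classically formulated at the level of triangulated categories $\perf(-)$, genuinely lift to dg-level decompositions so that $U$ can be applied. In the geometric setting considered here, this is handled by the (essentially) unique dg enhancement of $\perf(X)$ for a smooth proper scheme, together with the functoriality of the Orlov and projective bundle constructions; once this enhancement step is granted, the rest of the argument is a direct application of Bittner's presentation together with the additivity of $U$.
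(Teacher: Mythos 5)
Your argument is correct and is essentially the proof given in the cited reference \cite[Prop.~4.1]{Tits}: one invokes Bittner's presentation of $K_0\mathrm{Var}(k)$ in characteristic zero and verifies the blow-up relation using Orlov's blow-up formula together with the projective bundle formula, both lifted to dg enhancements, and the fact that the universal additive invariant $U(-)$ sends semiorthogonal decompositions to direct sums in $\NChow(k)$. The enhancement issue you flag is indeed the only technical point, and it is resolved exactly as you indicate.
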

\section{Cancellation property}\label{sec:cancellation}
We start by recalling the following cancellation result:
\begin{proposition}{(see \cite[Prop.~4.9]{Tits})}\label{prop:cancellation-0}
Let $\{A_i\}_{1\leq i \leq n}$ and $\{A'_j\}_{1\leq j \leq m}$ be two families of central simple $k$-algebras. Given any noncommutative Chow motive $N\!\!M \in \NChow(k)$, we have the following implication:
\begin{eqnarray}
N\!\!M \oplus \oplus_{i=1}^n U(A_i) \simeq N\!\!M \oplus \oplus_{j=1}^m U(A'_j) & \Rightarrow & n=m\,\,\,\mathrm{and}\,\, \oplus_{i=1}^n U(A_i) \simeq \oplus_{j=1}^m U(A'_j)\,.
\end{eqnarray}
\end{proposition}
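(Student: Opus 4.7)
The plan is to establish cancellation by passing to a quotient category of $\NChow(k)$ in which cancellation holds for general reasons, and then to lift the resulting isomorphism back to $\NChow(k)$ using Theorem \ref{thm:Brauer1}.

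First, I would apply the canonical functor $\NChow(k) \to \mathrm{NRad}(k)$ (or, alternatively, $\NChow(k) \to \NNum(k)_\bbQ$) to the given isomorphism $N\!\!M \oplus \bigoplus_{i=1}^n U(A_i) \simeq N\!\!M \oplus \bigoplus_{j=1}^m U(A'_j)$. By construction, in $\mathrm{NRad}(k)$ the Jacobson radical of every endomorphism ring vanishes. Using the formula $\Hom_{\NChow(k)}(U(A), U(A')) = K_0(A^\op \otimes A') \simeq \bbZ$, together with the composition law of Remark \ref{ex:key} (whose weights are recorded by the indices of tensor products), I would compute that the endomorphism ring of $X := \bigoplus_{i=1}^n U(A_i)$ in $\mathrm{NRad}(k)$ is semilocal — indeed, a finite product of matrix algebras indexed by equivalence classes of Brauer elements. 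By the classical cancellation theorem for objects with semilocal endomorphism rings (Bass--Evans, Facchini), the object $X$ cancels from direct sums in $\mathrm{NRad}(k)$, yielding an isomorphism $\bigoplus_{i=1}^n U(A_i) \simeq \bigoplus_{j=1}^m U(A'_j)$ in $\mathrm{NRad}(k)$.

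Second, I would lift this isomorphism back to $\NChow(k)$. Tracking a suitable rank function shows that $n = m$, and comparing multiplicities of the indecomposable summands in $\mathrm{NRad}(k)$ forces the multisets of $p$-primary Brauer classes of $\{A_i\}$ and $\{A'_j\}$ to agree for every prime $p$. Applying Theorem \ref{thm:Brauer1} in the converse direction then yields the desired isomorphism $\bigoplus_{i=1}^n U(A_i) \simeq \bigoplus_{j=1}^m U(A'_j)$ in $\NChow(k)$ itself.

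The main obstacle is the first step: showing that $\End_{\mathrm{NRad}(k)}(X)$ is semilocal. This requires a careful analysis of the composition law of Remark \ref{ex:key}, in particular understanding how the indices of the tensor products $A_i^\op \otimes A_j$ interact modulo the $\cR$-ideal to yield a semisimple quotient. Once this structural fact is in hand, the cancellation in $\mathrm{NRad}(k)$ and the lifting to $\NChow(k)$ via Theorem \ref{thm:Brauer1} are relatively formal consequences of the machinery already developed in Section \ref{sec:preliminaries}.
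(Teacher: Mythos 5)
Note first that the paper does not prove this proposition at all: it is imported verbatim from \cite[Prop.~4.9]{Tits}, so your argument has to stand on its own. It does not, because the pivotal step misapplies the semilocal cancellation theorem. The Evans--Facchini theorem cancels an object $M$ from $M\oplus A\simeq M\oplus B$ when $\End(M)$ --- the endomorphism ring of the object \emph{being cancelled} --- is semilocal. Here the object being cancelled is the arbitrary motive $N\!\!M$, whose endomorphism ring in $\NChow(k)$ is $K_0(\cA^\op\otimes\cA)$ for an arbitrary smooth proper dg category $\cA$; this need not even be a finitely generated abelian group, and its image in $\mathrm{NRad}(k)$ (the quotient by the Kelly/Jacobson radical) is not semisimple artinian. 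You instead compute $\End$ of the complement $X=\oplus_i U(A_i)$, which is not what the theorem requires --- and even that computation is wrong: $\End_{\NChow(k)}(U(A))\simeq K_0(A^\op\otimes A)\simeq\bbZ$ has zero radical (and one checks directly from the definition of the $\cR$-ideal that $\cR(U(A),U(B))=0$ for all $A,B$, since the composition pairing is multiplication by $\mathrm{ind}(A^\op\otimes B)^2$ on copies of $\bbZ$), so with integral coefficients the passage to $\mathrm{NRad}(k)$ kills nothing and $\End_{\mathrm{NRad}(k)}(X)$ is a ring of integer matrices with weighted composition, not ``a finite product of matrix algebras'' and not semilocal. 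The integrality of these $\bbZ$'s is precisely the difficulty: your alternative route through $\NNum(k)_\bbQ$ does give cancellation for free by semisimplicity, but since $U(A)_\bbQ\simeq U(k)_\bbQ$ for every central simple algebra (by the Azumaya-algebra result quoted in the proof of Theorem \ref{thm:cancellation}), it only recovers $n=m$ and forgets all Brauer classes.

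What any correct proof must supply, and what your sketch omits entirely, is a finiteness input: a bound on the multiplicity with which each $U(A_i)$ can occur as a direct summand of the arbitrary motive $N\!\!M$. This is exactly the role of Lemma \ref{lem:key} in the paper's proof of the neighbouring Theorem \ref{thm:cancellation}, where the bound comes from the finite-dimensionality of the Hochschild homology $HH_0(N\!\!M)$; without it, an Eilenberg-swindle-type phenomenon cannot be excluded and no formal cancellation argument can close. The correct shape of the argument is: split off each $U(A_i)$ from $N\!\!M$ with its (finite) maximal multiplicity, exploit the orthogonality modulo $p$ of $U(A)$ and $U(B)$ when $[A]$ and $[B]$ have distinct $p$-primary parts (again via the pairing $\mathrm{ind}(A^\op\otimes B)^2$, now reduced mod $p$, which is where a radical-type quotient with $\bbF_p$-coefficients genuinely helps), and only then invoke Theorem \ref{thm:Brauer1}. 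Your second paragraph gestures at this last comparison, but resting it on a Krull--Schmidt property of $\mathrm{NRad}(k)$ is circular: the endomorphism rings there are $\bbZ$-algebras, not local rings, so uniqueness of decompositions is exactly what is in question.
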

The following result, which is of independent interest, will play a key role in the sequel:
\begin{theorem}[Cancellation]\label{thm:cancellation}
Let $l/k$, resp. $l'/k$, be a field a extension of degree $2$ and $A$, resp. $A'$, a central simple $l$-algebra, resp. central simple $l'$-algebra, such that $\mathrm{ind}(A)$, resp. $\mathrm{ind}(A')$, is a power of $2$. Moreover, let $B$ and $B'$ be two central simple $k$-algebras. Under these assumptions, given any noncommutative Chow motive $N\!\!M \in \NChow(k)$ and integer $n\geq0$, we have the following implication:
\begin{eqnarray*}
N\!\!M\oplus U(B)^{\oplus n} \oplus U(A)  \simeq N\!\!M\oplus U(B')^{\oplus n} \oplus U(A')& \Rightarrow& U(A) \simeq U(A')\,. 
\end{eqnarray*}
\end{theorem}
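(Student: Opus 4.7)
The strategy is to reduce to Proposition \ref{prop:cancellation-0}, which handles cancellation when all summands beyond $N\!\!M$ are motives of central simple $k$-algebras, by carefully splitting the ``anomalous'' summands $U(A)$ and $U(A')$, whose base algebras are central simple only over the quadratic extensions $l$ and $l'$. The hypothesis that $\mathrm{ind}(A)$ and $\mathrm{ind}(A')$ are powers of $2$ is reserved for the final integral gluing step.

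First, I would analyse $\End_{\NChow(k)}(U(A))$ for $A$ central simple over a degree-$2$ extension $l/k$. Via Remark \ref{ex:key}, with $H = \mathrm{Gal}(\overline{k}/l)\subset G$ of index $2$, the diagonal $G$-action on $G/H \times G/H$ has precisely two orbits (diagonal and anti-diagonal); moreover the index weight $\mathrm{ind}(A^{\op}\otimes_l A)^2$ equals $1$ because $A^{\op}\otimes_l A$ is split. This identifies $\End_{\NChow(k)}(U(A))$, as a ring, with the integral group ring $\bbZ[\bbZ/2]$, and the same holds for $U(A')$.

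Next, I would pass to $\NChow(k)_{\bbZ[1/2]}$. Here $\bbZ[1/2][\bbZ/2]$ is semisimple with orthogonal idempotents $e_{\pm}=(1\pm\sigma)/2$, splitting $U(A)$ into a direct sum $U(A)^{+}\oplus U(A)^{-}$ (and analogously $U(A')$). The ``$+$'' summand turns out to be the motive of a central simple $k$-algebra (morally a corestriction/norm of $A$), while the ``$-$'' summand encodes the remaining $l$-level information. Applying the splitting to both sides of the hypothesis and invoking Proposition \ref{prop:cancellation-0} over $\bbZ[1/2]$ to cancel $N\!\!M$ yields
$$U(B)^{\oplus n}\oplus U(A)^{+}\oplus U(A)^{-}\;\simeq\;U(B')^{\oplus n}\oplus U(A')^{+}\oplus U(A')^{-}$$
in $\NChow(k)_{\bbZ[1/2]}$. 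A matching argument based on Theorem \ref{thm:Brauer1} (applied to the $U(B)$- and ``$+$''-contributions, which are all motives of $k$-CSAs) together with a Krull--Schmidt-type argument on the ``$-$''-contributions then produces $U(A)\simeq U(A')$ in $\NChow(k)_{\bbZ[1/2]}$.

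Finally, to promote this to an integral isomorphism in $\NChow(k)$, the $2$-power index hypothesis becomes decisive. At the prime $2$, I would pass to $\mathrm{NRad}(k)_{\bbZ_{(2)}}$ (equivalently work with $\NNum$ at $2$), where the endomorphism algebras modulo the radical reduce to well-understood semisimple algebras and the $2$-primary Brauer data is detectable via Theorem \ref{thm:Brauer}. The main obstacle is precisely this last step: carefully matching the odd-primary information (obtained over $\bbZ[1/2]$) with the $2$-primary information (obtained via the radical quotient) to assemble a single isomorphism $U(A)\simeq U(A')$ in $\NChow(k)$. The $2$-power index hypothesis is exactly what prevents odd-primary Brauer contributions on the ``$-$''-summands of $U(A)$ and $U(A')$ from creating obstructions at odd primes and what makes this local-to-integral assembly work.
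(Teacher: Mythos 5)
Your proposal has the right starting point (the identification $\End_{\NChow(k)}(U(A))\simeq \bbZ[G/H]\simeq\bbZ[\bbZ/2]$, which is exactly what the paper establishes via Remark \ref{ex:key}), but the two steps that carry all the weight are gaps. First, after splitting $U(A)_{\bbZ[1/2]}=U(A)^{+}\oplus U(A)^{-}$ you cancel $N\!\!M$ by ``invoking Proposition \ref{prop:cancellation-0} over $\bbZ[1/2]$'' and then finish with ``a Krull--Schmidt-type argument.'' Proposition \ref{prop:cancellation-0} is stated integrally and only for direct sums of motives of central simple $k$-algebras; the summands $U(A)^{\pm}$ are not of that form (you assert, without proof, that $U(A)^{+}$ is a corestriction, and say nothing about $U(A)^{-}$), so the proposition does not apply. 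More seriously, no Krull--Schmidt theorem is available in $\NChow(k)_R$: these are additive categories with large, non-local endomorphism rings, and the failure of unique decomposition is precisely the difficulty. The paper's substitute is Lemma \ref{lem:key}, which bounds the multiplicity of $U(A)_{\bbF_2}$ as a direct summand of $N\!\!M_{\bbF_2}$ by realizing morphisms in Hochschild homology and running a determinant-parity argument; nothing in your sketch plays this role. Second, your final step --- reassembling an integral isomorphism $U(A)\simeq U(A')$ from information at odd primes and at $2$ --- is exactly the step you flag as ``the main obstacle,'' and you do not carry it out. There is no patching principle in $\NChow(k)$ that glues a $\bbZ[1/2]$-linear isomorphism and a $2$-local statement into an integral one, and the conclusion $U(A)\simeq U(A')$ in $\NChow(k)$ is (by Lemma \ref{lem:key2}) equivalent to $[A]=[A']$ in $\mathrm{Br}(l)$, which is what must actually be extracted.

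The paper's route avoids both problems and uses the $2$-power hypothesis quite differently from how you describe. It first proves $l\simeq l'$ by passing to $\NNum(k)_{\bbQ}$, where semi-simplicity gives honest cancellation, and applying Lemma \ref{lem:key1}; your sketch never addresses the possibility $l\not\simeq l'$. It then argues by contradiction in the radical quotient $\mathrm{NRad}(k)_{\bbF_2}$: if $[A]\neq[A']$ in $\mathrm{Br}(l)$, then $\mathrm{ind}(A^{\op}\otimes_l A')$ is a power of $2$ greater than $1$, so its square kills the composition pairing mod $2$ and forces $\Hom_{\mathrm{NRad}(k)_{\bbF_2}}(U(A)_{\bbF_2},U(A')_{\bbF_2})=0$ in both directions (and similarly $\Hom(U(B)_{\bbF_2},U(A)_{\bbF_2})=0$ by Lemma \ref{lem:computation-B-A}, because of the factor $2$ coming from $[l:k]=2$). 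Combined with Lemma \ref{lem:key}, this isolates the $U(A)_{\bbF_2}$-isotypic part on both sides and yields $U(A)^{\oplus(m+1)}_{\bbF_2}\simeq U(A)^{\oplus m}_{\bbF_2}$, a contradiction. So the $2$-power index hypothesis is not about controlling odd-primary contributions of the ``$-$''-summands; it is the mechanism that makes the relevant Hom-groups vanish modulo $2$ in the radical category. To repair your proposal you would need, at minimum, a proof that $l\simeq l'$, a replacement for Krull--Schmidt, and a genuine local-to-integral gluing argument --- at which point you would essentially be reconstructing the paper's proof.
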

In order to prove Theorem \ref{thm:cancellation}, we need several ingredients.
\begin{lemma}\label{lem:key1}
Given two field extensions $l/k$ and $l'/k$ of degree $2$, we have the following equivalences:
\begin{equation*}\label{eq:equivalences}
l\simeq l' \quad \Leftrightarrow \quad U(l)_\bbQ \simeq U(l')_\bbQ \,\,\mathrm{in}\,\,\NChow(k)_\bbQ \quad  \Leftrightarrow \quad U(l)_\bbQ \simeq U(l')_\bbQ \,\,\mathrm{in}\,\,\NNum(k)_\bbQ\,.
\end{equation*}
\end{lemma}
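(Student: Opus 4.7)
The forward implications $l \simeq l' \Rightarrow U(l)_\bbQ \simeq U(l')_\bbQ$ in both $\NChow(k)_\bbQ$ and $\NNum(k)_\bbQ$ are immediate from the functoriality of $U(-)_\bbQ$ combined with the canonical projection $\NChow(k)_\bbQ \to \NNum(k)_\bbQ$. Since an isomorphism in $\NChow(k)_\bbQ$ induces an isomorphism in $\NNum(k)_\bbQ$, the substance of the lemma reduces to proving the strongest converse: if $U(l)_\bbQ \simeq U(l')_\bbQ$ in $\NNum(k)_\bbQ$, then $l \simeq l'$.

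I argue by contradiction. Write $l = \overline{k}^H$ and $l' = \overline{k}^{H'}$ for the associated index-$2$ (hence normal) subgroups $H, H' \subseteq G$, and assume $H \neq H'$; then $H \cap H'$ has index $4$ in $G$. I apply Remark \ref{ex:key} with the central simple algebras $A = l$ and $A' = l'$. Since
$(l \otimes_l \overline{k}^{H \cap H'})^\op \otimes_{\overline{k}^{H \cap H'}} (l' \otimes_{l'} \overline{k}^{H \cap H'}) = \overline{k}^{H \cap H'}$
is a field, every factor $\mathrm{ind}(-)^2$ appearing in the composition formula collapses to $1$. Because $G/H \times G/H'$ is transitive under the diagonal action (its stabilizer $H \cap H'$ has index $4$, matching the cardinality of the product), one reads off $\Hom_{\NChow(k)_\bbQ}(U(l)_\bbQ, U(l')_\bbQ) \cong \mathrm{Map}^G(G/H \times G/H', \bbQ) \cong \bbQ$, and analogously for the reverse direction. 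In contrast, $\End_{\NChow(k)_\bbQ}(U(l)_\bbQ) \cong \mathrm{Map}^G(G/H \times G/H, \bbQ) \cong \bbQ^2$ admits the basis $\{\delta, \mathbf{1}_{\mathrm{off}}\}$ of characteristic functions of the diagonal and off-diagonal $G$-orbits, with $\id = \delta$.

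Now lift any hypothetical isomorphism $U(l)_\bbQ \simeq U(l')_\bbQ$ in $\NNum(k)_\bbQ$, together with its inverse, to morphisms $\alpha, \beta$ in $\NChow(k)_\bbQ$; they correspond to constants $a, b \in \bbQ$. The composition formula of Remark \ref{ex:key} yields
$$\beta \alpha \;=\; 2ab \cdot (\delta + \mathbf{1}_{\mathrm{off}}) \in \End_{\NChow(k)_\bbQ}(U(l)_\bbQ),$$
so the element $\beta\alpha - \id = (2ab - 1)\,\delta + 2ab \cdot \mathbf{1}_{\mathrm{off}}$ must lie in the $\cN$-ideal. Using the trace $\mathrm{tr}(f) = \sum_{x \in G/H} f(x,x)$ (so $\mathrm{tr}(\delta) = 2$ and $\mathrm{tr}(\mathbf{1}_{\mathrm{off}}) = 0$) together with the composition relations $\delta^2 = \delta$, $\delta \cdot \mathbf{1}_{\mathrm{off}} = \mathbf{1}_{\mathrm{off}} = \mathbf{1}_{\mathrm{off}} \cdot \delta$, and $\mathbf{1}_{\mathrm{off}}^2 = \delta$, pairing $\beta\alpha - \id$ against $\delta$ and against $\mathbf{1}_{\mathrm{off}}$ gives $\mathrm{tr}(\delta \circ (\beta\alpha - \id)) = 2(2ab - 1)$ and $\mathrm{tr}(\mathbf{1}_{\mathrm{off}} \circ (\beta\alpha - \id)) = 4ab$, forcing simultaneously $2ab - 1 = 0$ and $2ab = 0$ — a contradiction.

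The main obstacle is bookkeeping: verifying that all $\mathrm{ind}(-)^2$ factors in both the composition formula and the subsequent trace computation genuinely collapse to $1$ in this purely commutative setting, and confirming that the trace formula $\mathrm{tr}(f) = \sum_{x} f(x,x)$ is the correct shadow of the categorical trace in $\NChow(k)_\bbQ$ under the identification of Remark \ref{ex:key}. A conceptual shortcut, should Marcolli--Tabuada's semisimplicity of $\NNum(k)_\bbQ$ be invoked, would be to split $U(l)_\bbQ \simeq \mathbf{1} \oplus M_{\chi_H}$ via the orthogonal idempotents $\tfrac{1}{2}(\delta \pm \mathbf{1}_{\mathrm{off}})$ and to observe that already $\Hom_{\NChow(k)_\bbQ}(M_{\chi_H}, M_{\chi_{H'}}) = 0$ by orthogonality of the distinct quadratic characters of $G$, ruling out any isomorphism $U(l)_\bbQ \simeq U(l')_\bbQ$ in $\NNum(k)_\bbQ$ a fortiori.
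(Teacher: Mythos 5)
Your proposal is correct and is essentially the paper's own argument: both proofs rest on the transitivity of the diagonal $G$-action on $G/H\times G/H'$ when $H\neq H'$ (so any lifted morphisms are constants $a,b$ and their composite is the constant function $2ab=2ab(\delta+\mathbf{1}_{\mathrm{off}})$), combined with the vanishing of the $\cN$-ideal on $\End_{\NChow(k)_\bbQ}(U(l)_\bbQ)$, which forces the impossible system $2ab=1$, $2ab=0$. The one step you flag as an obstacle --- justifying $\mathrm{tr}(f)=\sum_{x}f(x,x)$ --- is exactly what the paper circumvents: it proves $\cN(U(l)_\bbQ,U(l)_\bbQ)=0$ using the adjoint description of the $\cN$-ideal, the isomorphism $U(l)^\vee_\bbQ\otimes U(l)_\bbQ\simeq U(l)_\bbQ\oplus U(l)_\bbQ$ coming from $l\otimes_k l\simeq l\times l$, and the already-recorded nondegenerate pairing $\Hom_{\NChow(k)_\bbQ}(U(k)_\bbQ,U(l)_\bbQ)\times\Hom_{\NChow(k)_\bbQ}(U(l)_\bbQ,U(k)_\bbQ)\to\bbQ$, $(\alpha,\beta)\mapsto 2\alpha\beta$, so no categorical trace of an endomorphism ever has to be computed. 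Your trace formula is in fact correct (one checks $\mathrm{tr}(\delta)=\dim_k HH_0(l)=2$ and $\mathrm{tr}(\mathbf{1}_{\mathrm{off}})=\dim_k HH_0(l;l_\sigma)=0$ for the twisted bimodule $l_\sigma$), so your computation does go through; the adjunction route simply delivers the same nondegeneracy for free.
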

\begin{proof}
Both implications $(\Rightarrow)$ are clear. We start by proving the right-hand side implication $(\Leftarrow)$. Note that since the field extension $l/k$ is of degree $2$, the composition map
$$ \Hom_{\NChow(k)_\bbQ}(U(k)_\bbQ, U(l)_\bbQ) \times \Hom_{\NChow(k)_\bbQ}(U(l)_\bbQ, U(k)_\bbQ) \too \Hom_{\NChow(k)_\bbQ}(U(k)_\bbQ, U(k)_\bbQ)$$
corresponds to the bilinear pairing $\bbQ \times \bbQ \to \bbQ, (\alpha,\beta) \mapsto 2\alpha\beta$. Note also that since the field extension $l/k$ is Galois, we have the following isomorphism of $k$-algebras $l\otimes_k l \stackrel{\simeq}{\to} l \times l, \lambda_1 \otimes \lambda_2 \mapsto (\lambda_1\lambda_2, \sigma(\lambda_1)\lambda_2)$, where $\sigma$ stands for the generator of the Galois group of $l/k$. This implies that 
$$U(l)^\vee_\bbQ \otimes U(l)_\bbQ \simeq U(l^\op)_\bbQ \otimes U(l)_\bbQ \simeq U(l^\op \otimes_k l)_\bbQ = U(l\otimes_k l)_\bbQ \simeq U(l)_\bbQ \oplus U(l)_\bbQ\,.$$ 
Therefore, following the definition of the category of noncommutative numerical motives, we observe that $\mathrm{End}_{\NChow(k)_\bbQ}(U(l)_\bbQ) = \mathrm{End}_{\NNum(k)_\bbQ}(U(l)_\bbQ)$. All the above holds {\em mutatis mutandis} with $l$ replaced by $l'$. Hence, we conclude that if $U(l)_\bbQ \simeq U(l')_\bbQ \,\,\mathrm{in}\,\,\NNum(k)_\bbQ$, then $U(l)_\bbQ \simeq U(l')_\bbQ \,\,\mathrm{in}\,\,\NChow(k)_\bbQ$.

We now prove the left-hand side implication $(\Leftarrow)$. Let $H \subseteq G$ and $H'\subseteq G$ be the subgroups of index $2$ such that $\overline{k}^H=l$ and $\overline{k}^{H'}=l'$, respectively. Recall from Remark \ref{ex:key} that $\Hom_{\NChow(k)_\bbQ}(U(l)_\bbQ, U(l')_\bbQ)$ can be identified with the $\bbQ$-vector space $\mathrm{Map}^G(G/H \times G/H', \bbQ)$ of $G$-invariant maps from the finite $G$-set $G/H\times G/H'$ to $\bbQ$. Recall also that the composition map
\begin{equation*}
\Hom_{\NChow(k)_\bbQ}(U(l)_\bbQ, U(l')_\bbQ) \times \Hom_{\NChow(k)_\bbQ}(U(l')_\bbQ, U(l)_\bbQ) \too \Hom_{\NChow(k)_\bbQ}(U(l)_\bbQ, U(l)_\bbQ)
\end{equation*}
corresponds to the bilinear pairing
$$
\mathrm{Map}^G(G/H \times G/H', \bbQ) \times \mathrm{Map}^G(G/H' \times G/H, \bbQ) \too \mathrm{Map}^G(G/H \times G/H, \bbQ)
$$
that sends $(\alpha, \beta)$ to the $G$-invariant map $(\overline{g_1}, \overline{g_3}) \mapsto \sum_{\overline{g_2} \in G/H'} \alpha(\overline{g_1},\overline{g_2}) \beta(\overline{g_2},\overline{g_3})$. Moreover, the identity of $U(l)_\bbQ$ corresponds to the $G$-invariant map $G/H \times G/H \to \bbQ$ with $1$ in the diagonal and $0$ elsewhere. This shows, in particular, that the $\bbQ$-algebra of endomorphisms $\mathrm{End}_{\NChow(k)_\bbQ}(U(l)_\bbQ)$ corresponds to the group $\bbQ$-algebra $\bbQ[G/H]$. Now, let us assume that $l\not\simeq l'$, i.e., that the $k$-algebras $l$ and $l'$ are not isomorphic. Thanks to Galois theory, we have $H\neq H'$. Moreover, since the field extensions $l/k$ and $l'/k$ are of degree $2$, we have $H\not\subseteq H'$ and $H'\not\subseteq H$. This implies that the (diagonal) $G$-action on the set $G/H \times G/H'$ is transitive. Therefore, it follows from the above description of the category of noncommutative Chow motives that $U(l)_\bbQ \simeq U(l')_\bbQ$ in $\NChow(k)_\bbQ$ if and only if there exist rational numbers $\alpha, \beta \in \bbQ$ such that 
\begin{equation}\label{eq:system-equations}
\begin{cases}\alpha \beta + \alpha \beta =1\\ \alpha \beta + \alpha  \beta =0 \end{cases}\,.
\end{equation}
Since the system of equations \eqref{eq:system-equations} is impossible, we hence conclude that $U(l)_\bbQ \not\simeq U(l')_\bbQ$ in $\NChow(k)_\bbQ$.
\end{proof}
\begin{lemma}\label{lem:key2}
Given a field extension $l/k$ of degree $2$ and two central simple $l$-algebras $A$ and $A'$, we have the following equivalence:
\begin{eqnarray*}
U(A) \simeq U(A') \,\,\mathrm{in}\,\,\NChow(k) & \Leftrightarrow & [A]= [A'] \,\,\mathrm{in}\,\,\mathrm{Br}(l) \,.
\end{eqnarray*}
\end{lemma}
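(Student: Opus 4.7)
The plan is to prove each implication separately, combining Theorem \ref{thm:Brauer} (Brauer's theorem for noncommutative motives) with the explicit Hom-description of Remark \ref{ex:key} and the canonical Galois decomposition $l\otimes_k l\simeq l\times l$.

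For the easy direction $(\Leftarrow)$, Theorem \ref{thm:Brauer} holds verbatim over $l$, so $[A]=[A']$ in $\mathrm{Br}(l)$ produces an isomorphism $U_l(A)\simeq U_l(A')$ in $\NChow(l)$. There is a restriction-of-scalars functor $\NChow(l)\to\NChow(k)$---induced by viewing an $l$-linear smooth proper dg category as $k$-linear---sending $U_l(B)$ to $U_k(B)$ for every $l$-algebra $B$. Applying it yields $U(A)\simeq U(A')$ in $\NChow(k)$.

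For the harder direction $(\Rightarrow)$, let $H\subseteq G$ be the unique index-$2$ subgroup with $\overline{k}^H=l$. Since both $A$ and $A'$ are defined over $l$, Remark \ref{ex:key} applies with $H=H'$. The $G$-set $G/H\times G/H$ has exactly two orbits (the diagonal and the anti-diagonal), so
\[
\Hom_{\NChow(k)}\bigl(U(A),U(A')\bigr)\;\simeq\;\bbZ\oplus\bbZ,
\]
and via $l\otimes_k l\simeq l\times l$ this matches the $K$-theoretic splitting
\[
K_0(A^{\op}\otimes_k A')\;\simeq\;K_0(A^{\op}\otimes_l A')\,\oplus\,K_0\bigl(A^{\op}\otimes_l (A')^{\sigma}\bigr),
\]
with $(A')^{\sigma}$ the Galois conjugate under the non-trivial $\sigma\in\mathrm{Gal}(l/k)$. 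A short verification identifies $\id_{U(A)}$ with $(1,0)$ in the analogous endomorphism-ring decomposition. Given mutually inverse morphisms $f=(f_1,f_2)\colon U(A)\to U(A')$ and $g=(g_1,g_2)\colon U(A')\to U(A)$, and setting $d_1:=\mathrm{ind}(A^{\op}\otimes_l A')$ and $d_2:=\mathrm{ind}(A^{\op}\otimes_l (A')^{\sigma})$, the composition formula of Remark \ref{ex:key} translates $g\circ f=\id_{U(A)}$ into the integer equations
\[
f_1 g_1\, d_1^2 + f_2 g_2\, d_2^2 \;=\; 1, \qquad f_1 g_2 + f_2 g_1 \;=\; 0.
\]
A short Pell-type analysis shows the only solutions are either (a) $d_1=1$, $f_2=g_2=0$, $f_1,g_1\in\{\pm 1\}$, or (b) $d_2=1$, $f_1=g_1=0$, $f_2,g_2\in\{\pm 1\}$. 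Case (a) gives $[A]=[A']$ directly. In case (b), $[A]=\sigma[A']$; but $A'$ and $(A')^{\sigma}$ coincide as $k$-algebras (only the marked $l$-subalgebra differs), hence $U(A')\simeq U((A')^{\sigma})$ in $\NChow(k)$, and replacing $A'$ by $(A')^{\sigma}$ reduces case (b) to case (a).

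The hard part will be the explicit $K$-theoretic computation of the composition pairing---tracking how the bimodule tensor products $M_i\otimes_{A'}N_j$ distribute across the two orbit-type summands and how their $K$-classes rescale by the indices $d_i$---together with the Pell-type ruling out of the ``mixed'' integer solutions in which all four coordinates $f_1,f_2,g_1,g_2$ are nonzero.
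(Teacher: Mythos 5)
Your overall strategy coincides with the paper's: identify $\Hom_{\NChow(k)}(U(A),U(A'))$ and $\Hom_{\NChow(k)}(U(A'),U(A))$ with $\bbZ^{2}$ via the two orbits of the $G$-set $G/H\times G/H$, locate the identity at $(1,0)$, and convert the existence of mutually inverse morphisms into an integral solvability condition that forces an index to equal $1$. Your $(\Leftarrow)$ direction via a restriction-of-scalars functor $\NChow(l)\to\NChow(k)$ is a legitimate alternative to the paper's, which simply exhibits the explicit solution $\alpha^{+}=\beta^{+}=1$, $\alpha^{-}=\beta^{-}=0$ of its system. Your ``Pell-type'' step, though left unexecuted, does work once written out: from $f_1g_2+f_2g_1=0$ one finds that $D:=f_1^{2}d_1^{2}-f_2^{2}d_2^{2}$ must divide both $f_1$ and $f_2$, hence $D=\pm1$, hence exactly one of $f_1d_1$, $f_2d_2$ equals $\pm1$ and the other vanishes.

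The genuine gap is your resolution of case (b). If the composition pairing really carries two distinct indices $d_1=\mathrm{ind}(A^{\op}\otimes_{l}A')$ and $d_2=\mathrm{ind}(A^{\op}\otimes_{l}(A')^{\sigma})$, then case (b) yields $[A]=[(A')^{\sigma}]$ in $\mathrm{Br}(l)$, which is \emph{not} the asserted conclusion $[A]=[A']$: the $\mathrm{Gal}(l/k)$-action on $\mathrm{Br}(l)$ is nontrivial in general, and ``replacing $A'$ by $(A')^{\sigma}$'' changes the statement being proved rather than reducing one case to the other. (Indeed, $A'$ and $(A')^{\sigma}$ have the same underlying $k$-algebra, so $U(A')\simeq U((A')^{\sigma})$ always holds; with your two-index formula the most one can extract from $U(A)\simeq U(A')$ is $[A]\in\{[A'],[(A')^{\sigma}]\}$.) By contrast, the paper's proof invokes Remark \ref{ex:key}, which assigns the \emph{single} factor $\mathrm{ind}(A^{\op}\otimes_{l}A')^{2}$ to both orbits, so its diagonal equation $(\alpha^{+}\beta^{+}+\alpha^{-}\beta^{-})\,\mathrm{ind}(A^{\op}\otimes_{l}A')^{2}=1$ immediately forces $d_1=1$ and no case analysis arises. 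You must therefore either reconcile your refined formula with \cite[Thm.~2.13]{Separable} and, if it stands, weaken the conclusion to equality in $\mathrm{Br}(l)$ up to the Galois action (which is all the paper ever uses downstream, its applications concerning only $k$-algebra isomorphism classes), or else adopt the uniform-index formula of Remark \ref{ex:key}, under which the argument closes in one line.
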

\begin{proof}
Let $H \subseteq G$ be the subgroup of index $2$ such that $\overline{k}^H=l$. Given a commutative ring of coefficients $R$, recall from Remark \ref{ex:key} that $\Hom_{\NChow(k)_R}(U(A)_R, U(A')_R)$ can be identified with the $R$-module $\mathrm{Map}^G(G/H\times G/H, R)$ of $G$-invariant maps from the finite $G$-set $G/H \times G/H$ to $R$. Recall also that, under these identifications, the composition map
\begin{eqnarray*}
\Hom_{\NChow(k)_R}(U(A)_R, U(A')_R) \times \Hom_{\NChow(k)_R}(U(A')_R, U(A)_R) \too \Hom_{\NChow(k)_R}(U(A)_R, U(A)_R)
\end{eqnarray*}
corresponds to the bilinear pairing
$$ \mathrm{Map}^G(G/H \times G/H, R) \times \mathrm{Map}^G(G/H \times G/H, R) \too \mathrm{Map}^G(G/H \times G/H, R)$$
that sends $(\alpha, \beta)$ to the $G$-invariant map $(\overline{g_1}, \overline{g_3}) \mapsto \sum_{\overline{g_2} \in G/H} \alpha(\overline{g_1},\overline{g_2}) \beta(\overline{g_2},\overline{g_3}) \mathrm{ind}(A^\op \otimes_l A')^2$. Moreover, the identity of $U(A)_R$ corresponds to the $G$-invariant map $G/H \times G/H \to R$ with $1$ in the diagonal and $0$ elsewhere. This shows, in particular, that the $R$-algebra of endomorphisms $\mathrm{End}_{\NChow(k)_R}(U(A)_R)$ corresponds to the group $R$-algebra $R[G/H]$. Note that since the field extension $l/k$ is of degree $2$, the $G$-set $G/H \times G/H$ has two orbits (the elements in the diagonal and the elements outside the diagonal). Therefore, thanks to the above description of the category of noncommutative Chow motives, we observe that $U(A)_R\simeq U(A')_R$ in $\NChow(k)_R$ if and only if there exist elements $\alpha^+, \alpha^-\in R$ and $\beta^+, \beta^- \in R$ such that
\begin{equation}\label{eq:system-equations1}
\begin{cases} \alpha^+ \beta^+ \mathrm{ind}(A^\op \otimes_l A')^2 + \alpha^- \beta^- \mathrm{ind}(A^\op \otimes_l A')^2=1  \\  \alpha^+ \beta^- \mathrm{ind}(A^\op \otimes_l A')^2 + \alpha^- \beta^+ \mathrm{ind}(A^\op \otimes_l A')^2=0\end{cases} \Leftrightarrow \begin{cases} (\alpha^+ \beta^+ + \alpha^- \beta^-) \mathrm{ind}(A^\op \otimes_l A')^2 =1 \\ (\alpha^+ \beta^- + \alpha^- \beta^+) \mathrm{ind}(A^\op \otimes_l A')^2=0\,.\end{cases}
\end{equation}
Now, let us restrict ourselves to the case where $R=\bbZ$. Note that if the system of equations \eqref{eq:system-equations1} holds, then $\mathrm{ind}(A^\op \otimes_l A')=1$. Conversely, if $\mathrm{ind}(A^\op \otimes_l A')=1$, then the system of equations \eqref{eq:system-equations1} holds; take, for example, $\alpha^+=\beta^+=1$ and $\alpha^-=\beta^-=0$. Consequently, the proof follows now from the classical fact that $\mathrm{ind}(A^\op \otimes_l A')=1$ if and only if $[A]=[A']$ in $\mathrm{Br}(l)$.
\end{proof}
\begin{lemma}\label{lem:computation-B-A}
Let $B$ a central simple $k$-algebra, $l/k$ a field extension of degree $2$ and $A$ a central simple $l$-algebra. Under these assumptions, we have $\Hom_{\mathrm{NRad}(k)_{\bbF_2}}(U(B)_{\bbF_2},U(A)_{\bbF_2}) =0$. 
\end{lemma}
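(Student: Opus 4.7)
My approach is to show that every morphism $\alpha$ in $\Hom_{\NChow(k)_{\bbF_2}}(U(B)_{\bbF_2}, U(A)_{\bbF_2})$ lies in the $\cR$-ideal. Since $\mathrm{NRad}(k)_{\bbF_2}$ is obtained from $\NChow(k)_{\bbF_2}$ by quotienting out $\cR$ and then idempotent-completing, this collapses the desired $\Hom$-group to zero. Unwinding the definition of $\cR$, I need to verify that for every $\beta$ in $\Hom_{\NChow(k)_{\bbF_2}}(U(A)_{\bbF_2}, U(B)_{\bbF_2})$, the endomorphism $\id_{U(B)_{\bbF_2}} - \beta \circ \alpha$ is invertible.

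The first step is to make the three relevant $\Hom$-groups completely explicit via the Galois-equivariant description of Remark \ref{ex:key}. Let $H \subset G$ be the (unique) index-two subgroup with $\overline{k}^H = l$. Since $B$ is central simple over $k$, it corresponds to the full subgroup $G$, so $G/G$ is a point; while $A$ corresponds to $H$, so $G/H$ is a transitive two-element $G$-set. This identifies both $\Hom$-groups between $U(B)_{\bbF_2}$ and $U(A)_{\bbF_2}$ with $\mathrm{Map}^G(G/H, \bbF_2) \cong \bbF_2$ (every $G$-invariant function on a transitive set is constant), and identifies $\End_{\NChow(k)_{\bbF_2}}(U(B)_{\bbF_2})$ with $\bbF_2$, the identity being $1$.

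The second step is to compute $\beta \circ \alpha$ using the bilinear pairing of Remark \ref{ex:key}. If $\alpha$ and $\beta$ are the constant $G$-invariant functions with values $a, b \in \bbF_2$, then evaluating the formula at the single point of $G/G \times G/G$ gives
$$\sum_{\overline{g_2} \in G/H} a\, b \cdot \mathrm{ind}\bigl((B \otimes_k l)^{\op} \otimes_l A\bigr)^2 \;=\; 2\, a\, b \cdot \mathrm{ind}\bigl((B \otimes_k l)^{\op} \otimes_l A\bigr)^2,$$
where the factor of $2$ comes from $|G/H| = [G : H] = 2$. Modulo $2$ this is zero, so $\beta \circ \alpha = 0$.

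The conclusion is then immediate: $\id_{U(B)_{\bbF_2}} - \beta \circ \alpha = 1$, which is a unit in $\End_{\NChow(k)_{\bbF_2}}(U(B)_{\bbF_2}) \cong \bbF_2$. Since this holds for every $\beta$, every $\alpha$ lies in $\cR(U(B)_{\bbF_2}, U(A)_{\bbF_2})$, and the target $\Hom$-group in $\mathrm{NRad}(k)_{\bbF_2}$ vanishes. The entire argument hinges on the single observation that the index-set in the composition sum is the two-element coset space $G/H$, and this factor of $2$ is precisely what gets killed by working with coefficients in $\bbF_2$; there is no conceptual obstacle beyond this parity point, and the role of the hypothesis $[l : k] = 2$ is exactly to guarantee the presence of this $2$.
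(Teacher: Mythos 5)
Your proof is correct and follows essentially the same route as the paper: identify the relevant $\Hom$-groups via Remark \ref{ex:key}, observe that the composition pairing $\Hom(U(B)_{\bbF_2},U(A)_{\bbF_2})\times\Hom(U(A)_{\bbF_2},U(B)_{\bbF_2})\to\End(U(B)_{\bbF_2})$ carries the factor $2=|G/H|$ and hence vanishes modulo $2$, and conclude from the definition of the $\cR$-ideal. The paper states this last step more tersely, but the content is identical.
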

\begin{proof}
Let $H \subseteq G$ be the subgroup of index $2$ such that $\overline{k}^H=l$. Given a commutative ring of coefficients $R$, recall from Remark \ref{ex:key} that $\Hom_{\NChow(k)_R}(U(B)_R,U(A)_R)$ can be identified with the $R$-module $\mathrm{Map}^G(G/G \times G/H, R)$ of $G$-invariant maps from the finite set $G$-set $G/G\times G/H = G/H$ to $R$. Recall also that, under these identifications, the composition map
\begin{equation}\label{eq:composition-map}
\Hom_{\NChow(k)_R}(U(B)_R,U(A)_R) \times \Hom_{\NChow(k)_R}(U(A)_R,U(B)_R) \too \Hom_{\NChow(k)_R}(U(B)_R,U(B)_R)
\end{equation}
corresponds to the bilinear pairing $R \times R \to R, (\alpha, \beta) \mapsto 2\alpha\beta \mathrm{ind}((B\otimes_kl)^\op \otimes_l A)^2$. Consequently, in the particular case where $R=\bbF_2$, the composition map \eqref{eq:composition-map} is equal to zero. By definition of the category of noncommutative radical motives, this hence implies that $\Hom_{\mathrm{NRad}(k)_{\bbF_2}}(U(B)_{\bbF_2}, U(A)_{\bbF_2})=0$.
\end{proof}
\begin{lemma}\label{lem:key}
Let $l/k$ be a field extension of degree $2$ and $A$ a central simple $l$-algebra. Given any noncommutative Chow motive $N\!\!M\in \NChow(k)$, we have a direct sum decomposition $N\!\!M_{\bbF_2}\simeq M\!\!N \oplus U(A)_{\bbF_2}^{\oplus m}$ in the category $\mathrm{NRad}(k)_{\bbF_2}$  for some integer $m\geq 0$, where $M\!\!N$ is a noncommutative radical motive which does not contains $U(A)_{\bbF_2}$ as a direct summand.
\end{lemma}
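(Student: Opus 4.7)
The plan is to first show that $U(A)_{\bbF_2}$ is indecomposable in $\mathrm{NRad}(k)_{\bbF_2}$ with endomorphism ring equal to the field $\bbF_2$, and then extract the maximal $U(A)_{\bbF_2}$-isotypic summand of $N\!\!M_{\bbF_2}$ by a standard Krull--Schmidt-type ``peeling off'' argument.

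First I would compute the endomorphism ring. Using Remark \ref{ex:key} with the subgroup $H \subseteq G$ of index $2$ cut out by $l$, the ring $\End_{\NChow(k)_{\bbF_2}}(U(A)_{\bbF_2})$ is identified with the space of $G$-invariant maps $G/H \times G/H \to \bbF_2$. Since $G/H \times G/H$ has exactly two orbits (the diagonal and the off-diagonal), this endomorphism ring is $2$-dimensional over $\bbF_2$ with basis $\{e_+, e_-\}$, where $e_+$ is the identity. The composition formula of Remark \ref{ex:key} involves the index factor $\mathrm{ind}(A^{\op} \otimes_l A)^2 = 1$ (since $A^{\op} \otimes_l A$ is split), and a direct calculation yields $e_- \cdot e_- = e_+$. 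Hence $\End_{\NChow(k)_{\bbF_2}}(U(A)_{\bbF_2}) \simeq \bbF_2[t]/(t^2+1) = \bbF_2[t]/(t+1)^2$, a local ring with residue field $\bbF_2$. Since $\cR(X,X)$ coincides by definition with the Jacobson radical of $\End(X)$ for any object $X$ of an additive category, this yields $\End_{\mathrm{NRad}(k)_{\bbF_2}}(U(A)_{\bbF_2}) = \bbF_2$; in particular $U(A)_{\bbF_2}$ is indecomposable in $\mathrm{NRad}(k)_{\bbF_2}$.

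Next I would verify that $V := \Hom_{\mathrm{NRad}(k)_{\bbF_2}}(U(A)_{\bbF_2}, N\!\!M_{\bbF_2})$ and $V' := \Hom_{\mathrm{NRad}(k)_{\bbF_2}}(N\!\!M_{\bbF_2}, U(A)_{\bbF_2})$ are finite-dimensional over $\bbF_2$. Since $N\!\!M$ is a direct summand of $U(\cA)$ for some smooth proper dg category $\cA$, the spaces $V$ and $V'$ are quotients of summands of $K_0(A^{\op} \otimes \cA) \otimes_\bbZ \bbF_2$ and $K_0(\cA^{\op} \otimes A) \otimes_\bbZ \bbF_2$, respectively; finite generation of $K_0$ for smooth proper dg categories then gives the claim.

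Finally, I would let $m$ be the largest non-negative integer such that $U(A)_{\bbF_2}^{\oplus m}$ appears as a direct summand of $N\!\!M_{\bbF_2}$ in $\mathrm{NRad}(k)_{\bbF_2}$. Any split injection $\iota \colon U(A)_{\bbF_2}^{\oplus n} \hookrightarrow N\!\!M_{\bbF_2}$ with retraction $\rho$ produces $n$ pairs $(\iota_i, \rho_i) \in V \times V'$ that are dual under the composition pairing $V' \otimes_{\bbF_2} V \to \End_{\mathrm{NRad}(k)_{\bbF_2}}(U(A)_{\bbF_2}) = \bbF_2$, so $n$ is bounded by the rank of this pairing and hence $m$ is well-defined and finite. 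Fixing $\iota$ and $\rho$ achieving the maximum, the idempotent $\iota \circ \rho$ splits in the (idempotent-complete) category $\mathrm{NRad}(k)_{\bbF_2}$, yielding a decomposition $N\!\!M_{\bbF_2} \simeq U(A)_{\bbF_2}^{\oplus m} \oplus M\!\!N$; the complement $M\!\!N$ cannot contain $U(A)_{\bbF_2}$ as a direct summand, for that would contradict the maximality of $m$. The main obstacle will be justifying the finite-dimensionality of $V$ and $V'$, which rests on finite generation of $K_0$ for smooth proper dg categories---a standard property in characteristic zero.
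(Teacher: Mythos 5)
Your computation of the endomorphism ring is correct and agrees with the paper: $\End_{\NChow(k)_{\bbF_2}}(U(A)_{\bbF_2})\simeq \bbF_2[G/H]\simeq \bbF_2[t]/(t+1)^2$ is local, so $\End_{\mathrm{NRad}(k)_{\bbF_2}}(U(A)_{\bbF_2})\simeq\bbF_2$, and your ``peel off the isotypic part'' strategy is also the paper's strategy in outline. The problem is the step you yourself flag as the main obstacle: the finiteness of $V=\Hom_{\mathrm{NRad}(k)_{\bbF_2}}(U(A)_{\bbF_2},N\!\!M_{\bbF_2})$. You reduce this to finite generation of $K_0$ of a smooth proper dg category, and assert that this is ``a standard property in characteristic zero.'' It is not; it is false. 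For $\cA=\perf_\dg(X)$ with $X$ a smooth projective complex surface with $p_g>0$, Mumford's infinite-dimensionality theorem for $CH_0$ shows that $K_0(X)$ has infinite rank, and neither the finite generation of $K_0(A^{\op}\otimes\cA)$ nor even the finite-dimensionality of its reduction modulo $2$ is available in general. Passing to the radical quotient does not obviously help either (the Kelly radical is not a $\otimes$-ideal, so you cannot invoke finiteness of numerical Hom-groups), so the bound on $m$ is genuinely unproven in your argument.

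This is exactly the difficulty the paper's proof is designed to circumvent. Instead of bounding multiplicities by a $K_0$-group, it supposes that $U(A)_{\bbF_2}^{\oplus n}$ splits off $N\!\!M_{\bbF_2}$ for arbitrarily large $n$, uses the fact that the quotient functor $\NChow(k)_{\bbF_2}\to\mathrm{NRad}(k)_{\bbF_2}$ \emph{reflects} (co)retractions (\cite[Prop.~1.4.4]{Kahn}) to lift such a splitting to $\NChow(k)_{\bbF_2}$ and then to integral morphisms $\rho,\varrho$ whose composite $\varrho\circ\rho$ is a matrix over $\bbZ[G/H]$ that is congruent to the identity modulo $2$, and then applies the additive invariant $HH_0$, which lands in \emph{finite-dimensional} $k$-vector spaces for smooth proper dg categories and satisfies $HH_0(A)\simeq l$. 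A determinant/parity computation shows $HH_0(\varrho\circ\rho)$ is invertible, whence $\dim_k HH_0(N\!\!M)\geq 2n$, a contradiction. If you want to salvage your write-up, you must replace the $K_0$-finiteness claim by an argument of this kind (or some other genuinely finite invariant); as written, the proof has a gap at its load-bearing step.
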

\begin{proof}
Recall from the proof of Lemma \ref{lem:key2} that $\mathrm{End}_{\NChow(k)}(U(A))$ corresponds to $\bbZ[G/H]$, where $H \subseteq G$ is the subgroup of index $2$ such that $\overline{k}^H=l$. Note that since the field extension $l/k$ is of degree $2$, we have $\bbZ[G/H]=\{\alpha^+1+ \alpha^-\sigma\,|\,\alpha^+, \alpha^- \in \bbZ\}$, where $\sigma$ stands for the generator of the Galois group of $l/k$.

By definition, the category $\mathrm{NRad}(k)_{\bbF_2}$ is idempotent complete. Therefore, we can inductively split the direct summand $U(A)_{\bbF_2}$ from $N\!\!M_{\bbF_2}$. We claim that this inductive process stops. Let us suppose by absurd that it does not. If this is the case, then, given any integer $n \geq 1$, $U(A)_{\bbF_2}^{\oplus n}$ is a direct summand of $N\!\!M_{\bbF_2}$. Using the fact that the quotient functor $\NChow(k)_{\bbF_2} \to \mathrm{NRad}(k)_{\bbF_2}$ reflects (co)retractions (consult \cite[Prop.~1.4.4]{Kahn}), we conclude that $U(A)_{\bbF_2}^{\oplus n}$ is a direct summand of $N\!\!M_{\bbF_2}$ in the category $\NChow(k)_{\bbF_2}$. Hence, there exist morphisms $\rho\colon U(A)^{\oplus n} \to N\!\!M$ and $\varrho\colon N\!\!M \to U(A)^{\oplus n}$ in the category $\NChow(k)$ such that: 
\begin{eqnarray}\label{eq:relations}
\varrho\circ \rho = \begin{bmatrix}
\alpha^+_{11}  1 + \alpha^-_{11} \sigma & \cdots & \alpha^+_{1n}  1 + \alpha^-_{1n} \sigma\\
\vdots &  & \vdots \\
\alpha^+_{n1}  1 + \alpha^-_{n1} \sigma & \cdots & \alpha^+_{nn}  1 + \alpha^-_{nn} \sigma
\end{bmatrix}_{n\times n} & \mathrm{with} & \begin{cases} \alpha^+_{ij}\,\,\mathrm{odd} \quad i=j \\ \alpha^+_{ij}\,\,\mathrm{even}\,\,\,\, i\neq j\end{cases}\,\,\,\,\alpha^-_{ij}\,\,\mathrm{even}\,.
\end{eqnarray}

Recall from \cite[\S2.2.8]{book} that Hochschild homology gives rise to a functor $HH_0(-):\dgcat_{\mathrm{sp}}(k) \to \mathrm{vect}(k)$, with values in the category of finite-dimensional $k$-vector spaces, such that $HH_0(U(\cA))\simeq HH_0(\cA)$ for every smooth proper dg category $\cA$. As proved in \cite[Thm.~2.1]{Azumaya}, the canonical map $l \to A$ induces an isomorphism $HH_0(l) \stackrel{\simeq}{\to} HH_0(A)$. Moreover, it is well-known that $HH_0(l)\simeq l/[l,l]\simeq l$. Note that since the field extension $l/k$ is of degree $2$, we have $l\simeq k(\sqrt{\lambda})$ for some $\lambda \in k^\times/(k^\times)^2$. Therefore, we conclude that $HH_0(A)\simeq k(\sqrt{\lambda})$. Under the identification $\mathrm{End}_{\NChow(k)}(U(A))\simeq \bbZ[G/H]$, the additive functor $HH_0(-)$ sends the element $\sigma$ to the conjugation automorphism $\sqrt{\lambda} \mapsto - \sqrt{\lambda}$ of the $k$-vector space $k(\sqrt{\lambda})$. Consequently, making use of the identification $\mathrm{End}_{\mathrm{vect}(k)}(HH_0(A))\simeq M_{2\times 2}(k)$ induced by the basis $\{1, \sqrt{\lambda}\}$ of $k(\sqrt{\lambda})$, we conclude that the functor $HH_0(-)$ induces the following ring homomorphism:
\begin{eqnarray*}
\bbZ[G/H] \too M_{2\times 2}(k) && \alpha^+1 + \alpha^-\sigma \mapsto \begin{bmatrix} \alpha^+ + \alpha^- & 0 \\ 0 & \alpha^+ - \alpha^- \end{bmatrix}_{2\times 2}\,.
\end{eqnarray*} 
This implies that the composition of $HH_0(\rho)\colon k(\sqrt{\lambda})^{\oplus n} \to HH_0(N\!\!M)$ with $HH_0(\varrho)\colon HH_0(N\!\!M) \to k(\sqrt{\lambda})^{\oplus n}$, in the category $\mathrm{vect}(k)$, admits the following block matrix representation: 

\begin{equation}\label{eq:matrix}
HH_0(\varrho \circ \rho) = \left[\begin{array}{cc|cc|cc}
\alpha^+_{11} + \alpha^-_{11} & 0& \cdots &  \cdots& \alpha^+_{1n} + \alpha^-_{1n}& 0\\
0 & \alpha^+_{11} - \alpha^-_{11} & \cdots & \cdots&0  & \alpha^+_{1n} - \alpha^-_{1n} \\
\hline
\vdots& \vdots & &&\vdots&\vdots\\
\vdots & \vdots & &&\vdots&\vdots\\
\hline
\alpha^+_{n1} + \alpha^-_{n1}& 0 & \cdots &\cdots&\alpha^+_{nn} + \alpha^-_{nn}&0\\
0& \alpha^+_{n1} - \alpha^-_{n1} & \cdots &\cdots&0 & \alpha^+_{nn} - \alpha^-_{nn}
\end{array}
\right]_{2n\times 2n}\,.
\end{equation}
Note that \eqref{eq:relations} implies that all the elements in the diagonal of the matrix \eqref{eq:matrix} are odd and that all the elements outside the diagonal are even. Therefore, a simple inductive argument shows that the determinant of \eqref{eq:matrix} is odd. In particular, the determinant of \eqref{eq:matrix} is non-zero. This implies that the homomorphism $HH_0(\rho)$ is injective and consequently that $\mathrm{dim}_k HH_0(N\!\!M)\geq 2n$. Since the integer $n \geq 1$ is arbitrary and the dimension of the $k$-vector space $HH_0(N\!\!M)$ is finite, we hence arrive to a contradiction. Therefore, there exists an integer $m\geq 0$ such that $N\!\!M_{\bbF_2}$ contains $U(A)^{\oplus m}_{\bbF_2}$, but not $U(A)^{\oplus (m+1)}_{\bbF_2}$, as a direct summand.
\end{proof}

We now have all the ingredients necessary for the proof of Theorem \ref{thm:cancellation}.
\subsection*{Proof of Theorem \ref{thm:cancellation}}
Note first that we have induced isomorphisms:
\begin{eqnarray}
& N\!\!M_\bbQ\oplus U(B)^{\oplus n}_{\bbQ}\oplus U(A)_\bbQ \simeq N\!\!M_\bbQ\oplus U(B')^{\oplus n}_\bbQ \oplus U(A')_\bbQ\,\,\mathrm{in}\,\,\NNum(k)_\bbQ& \label{eq:induced-new} \\
& N\!\!M_{\bbF_2}\oplus U(B)_{\bbF_2}^{\oplus n}\oplus U(A)_{\bbF_2} \simeq N\!\!M_{\bbF_2}\oplus U(B')_{\bbF_2}^{\oplus n}\oplus U(A')_{\bbF_2}\,\,\mathrm{in}\,\,\mathrm{NRad}(k)_{\bbF_2}\,. & \label{eq:induced-new11}
\end{eqnarray}
Moreover, as explained in \cite[Thm.~2.1]{Azumaya}, we have isomorphisms $U(B)_\bbQ \simeq U(k)_\bbQ$ and $U(A)_\bbQ \simeq U(l)_\bbQ$ in the category $\NChow(k)_\bbQ$; similarly for $B'$ and $A'$. Consequently, the above isomorphism \eqref{eq:induced-new} reduces to $N\!\!M_\bbQ\oplus U(k)^{\oplus n}_\bbQ \oplus U(l)_\bbQ \simeq N\!\!M_\bbQ\oplus U(k)^{\oplus n}_\bbQ \oplus U(l')_\bbQ$. Since the category $\NNum(k)_\bbQ$ is abelian semi-simple (consult \cite[Thm.~4.27]{book}), we hence conclude that $U(l)_\bbQ \simeq U(l')_\bbQ$ in $\NNum(k)_\bbQ$. Thanks to Lemma \ref{lem:key1}, this implies that the $k$-algebras $l$ and $l'$ are isomorphic. Therefore, in the remainder of the proof we can (and will) assume without loss of generality that $l= l'$.

Let $H\subseteq G$ be the subgroup of index $2$ such that $\overline{k}^H=l$. As explained in the proof of Lemma \ref{lem:key2}, the $\bbF_2$-algebras of endomorphisms $\mathrm{End}_{\NChow(k)_{\bbF_2}}(U(A)_{\bbF_2})$ and $\mathrm{End}_{\NChow(k)_{\bbF_2}}(U(A')_{\bbF_2})$ can be identified with the group $\bbF_2$-algebra $\bbF_2[G/H]$. Consequently, by definition of the category of noncommutative radical motives, we obtain induced identifications:
\begin{eqnarray}\label{eq:computation1}
\mathrm{End}_{\mathrm{NRad}(k)_{\bbF_2}}(U(A)_{\bbF_2})\simeq \bbF_2 && \mathrm{End}_{\mathrm{NRad}(k)_{\bbF_2}}(U(A')_{\bbF_2})\simeq \bbF_2\,.
\end{eqnarray}

Let us assume by absurd that $U(A)\not\simeq U(A')$ in $\NChow(k)$. Thanks to Lemma \ref{lem:key2}, this implies that $[A]\neq [A']$ in $\mathrm{Br}(l)$ or, equivalently, that $\mathrm{ind}(A^\op \otimes_l A')\neq 1$. Since $\mathrm{ind}(A^\op \otimes_l A')$ divides the product $\mathrm{ind}(A^\op) \mathrm{ind}(A)$ and $\mathrm{ind}(A^\op)=\mathrm{ind}(A)$ and $\mathrm{ind}(A')$ are powers of $2$, the index $\mathrm{ind}(A^\op \otimes_l A')$ is also a power of $2$. Therefore, following the proof of Lemma \ref{lem:key2}, we observe that the composition map
$$ \Hom_{\NChow(k)_{\bbF_2}}(U(A)_{\bbF_2}, U(A')_{\bbF_2}) \times \Hom_{\NChow(k)_{\bbF_2}}(U(A')_{\bbF_2}, U(A)_{\bbF_2}) \too \Hom_{\NChow(k)_{\bbF_2}}(U(A)_{\bbF_2}, U(A)_{\bbF_2})$$
is equal to zero; similarly with $A$ and $A'$ interchanged. By definition of the category of noncommutative radical motives, this hence implies that 
\begin{equation}\label{eq:computation2}
\Hom_{\mathrm{NRad}(k)_{\bbF_2}}(U(A)_{\bbF_2}, U(A')_{\bbF_2}) = \Hom_{\mathrm{NRad}(k)_{\bbF_2}}(U(A')_{\bbF_2}, U(A)_{\bbF_2}) =0 \,.
\end{equation}
Thanks to Lemma \ref{lem:key}, we have isomorphisms $N\!\!M_{\bbF_2}\simeq M\!\!N \oplus U(A)^{\oplus m}_{\bbF_2}$ and $N\!\!M_{\bbF_2}\simeq M\!\!N' \oplus U(A')^{\oplus m'}_{\bbF_2}$ in the category $\mathrm{NRad}(k)_{\bbF_2}$ for some integers $m, m'\geq 0$, where $M\!\!N$, resp. $M\!\!N'$, is a noncommutative radical motive which does not contains $U(A)_{\bbF_2}$, resp. $U(A')_{\bbF_2}$, as a direct summand. Note that, thanks to \eqref{eq:computation2}, these direct sum decompositions also yield a direct sum decomposition $N\!\!M_{\bbF_2}\simeq M\!\!N'' \oplus U(A)^{\oplus m}_{\bbF_2} \oplus U(A')^{\oplus m'}_{\bbF_2}$ in $\mathrm{NRad}(k)_{\bbF_2}$, where $M\!\!N''$ is a noncommutative radical motive which does not contains $U(A)_{\bbF_2}$ neither $U(A')_{\bbF_2}$ as a direct summand. Consequently, the above isomorphism \eqref{eq:induced-new11} can be re-written as follows:
\begin{equation}\label{eq:induced2}
M\!\!N''\oplus U(B)_{\bbF_2}^{\oplus n}\oplus U(A)^{\oplus(m+1)}_{\bbF_2} \oplus U(A')^{\oplus m'}_{\bbF_2}\simeq M\!\!N''\oplus U(B')_{\bbF_2}^{\oplus n}\oplus U(A)_{\bbF_2}^{\oplus m} \oplus U(A')^{\oplus(m'+1) }_{\bbF_2}\,.
\end{equation}
Now, by combining the above computations \eqref{eq:computation1}-\eqref{eq:computation2} with Lemma \ref{lem:computation-B-A} and with the fact that $M\!\!N''$ does not contains $U(A)_{\bbF_2}$ neither $U(A')_{\bbF_2}$ as a direct summand, we observe that the above isomorphism \eqref{eq:induced2} restricts to an isomorphism $U(A)^{\oplus (m+1)}_{\bbF_2} \simeq U(A)^{\oplus m}_{\bbF_2}$. By applying the functor $\Hom_{\mathrm{NRad}(k)_{\bbF_2}}(U(A)_{\bbF_2}, -)$ to this latter isomorphism, we hence obtain an induced isomorphism $\bbF_2^{\oplus (r+1)}\simeq \bbF_2^{\oplus r}$ of $\bbF_2$-vector spaces, which is a contradiction. Therefore, we conclude that $U(A)\simeq U(A')$ in $\NChow(k)$.
\begin{remark}[Motivation for noncommutative radical motives]
The category of noncommutative numerical motives has several good properties. In particular, it is abelian semi-simple. However, given a field extension $l/k$ of degree $2$ and a central simple $l$-algebra $A$, we observe that $U(A)_{\mathbb{F}_2}$ is isomorphic to the zero object in the category $\NNum(k)_{\mathbb{F}_2}$. Consequently, the proof of Theorem \ref{thm:cancellation} fails miserably when the category $\mathrm{NRad}(k)_{\mathbb{F}_2}$ is replaced by the category $\NNum(k)_{\mathbb{F}_2}$. This was our main motivation for the use of the category of noncommutative radical motives (instead of the category of noncommutative numerical motives), which ``interpolates'' between noncommutative Chow motives and noncommutative numerical motives.
\end{remark}
\section{Proof of Theorem \ref{thm:main-1}}
\noindent
{\bf Proof of item (i).} Let $X$ and $Y$ be two varieties. As proved in \cite[\S2 Cor.~3.5.12 b)]{Integration}, if $[X]=[Y]$ in the Grothendieck ring of varieties $K_0\mathrm{Var}(k)$, then $\mathrm{dim}(X)=\mathrm{dim}(Y)$. Consequently, if $[\mathrm{Iv}(A,\ast)]=[\mathrm{Iv}(A',\ast')]$, we conclude that $\mathrm{dim}(\mathrm{Iv}(A,\ast))=\mathrm{dim}(\mathrm{Iv}(A',\ast'))$. Using the fact that $\mathrm{dim}(\mathrm{Iv}(A,\ast))=\mathrm{deg}(A)-2$ and $\mathrm{dim}(\mathrm{Iv}(A',\ast'))=\mathrm{deg}(A')-2$, this implies that $\mathrm{deg}(A)=\mathrm{deg}(A')$.

\medskip

\noindent
{\bf Proof of item (ii).} Following \cite[Thm~2.1 and Rk.~2.3]{Homogeneous}, we have the following computation
\begin{eqnarray*}
U(\mathrm{Iv}(A,\ast)) \simeq \begin{cases} 
U(k)^{\oplus (\mathrm{deg}(A)-2)} \oplus U(C_0(A,\ast)) & \mathrm{deg}(A)\,\,\mathrm{odd} \\
U(k)^{\oplus (\mathrm{deg}(A)/2-1)} \oplus U(A)^{\oplus(\mathrm{deg}(A)/2-1)} \oplus U(C_0(A,\ast)) & \mathrm{deg}(A)\,\,\mathrm{even}
\end{cases}
\end{eqnarray*}
in the category $\NChow(k)$; similarly with $(A,\ast)$ replaced by $(A',\ast')$. If $[\mathrm{Iv}(A,\ast)]=[\mathrm{Iv}(A',\ast')]$ in $K_0\mathrm{Var}(k)$, then $\mu_{\mathrm{nc}}(\mathrm{Iv}(A,\ast))=\mu_{\mathrm{nc}}(\mathrm{Iv}(A',\ast'))$ in $K_0(\mathrm{NChow}(k))$, where $\mu_{\mathrm{nc}}$ is the motivic measure of Proposition \ref{prop:measure}. Thanks to the definition of the Grothendieck ring $K_0(\mathrm{NChow}(k))$, this implies that there exists a noncommutative Chow motive $N\!\!M \in \NChow(k)$ such that
\begin{eqnarray}\label{eq:star-last-last}
&&\quad \begin{cases} N\!\!M\oplus U(k)^{\oplus d}\oplus U(C_0(A,\ast))\simeq N\!\!M\oplus U(k)^{\oplus d}\oplus U(C_0(A',\ast'))& \mathrm{deg}(A)\,\,\mathrm{odd}\\
N\!\!M\oplus U(k)^{\oplus d} \oplus U(A)^{\oplus d} \oplus U(C_0(A,\ast))\simeq N\!\!M\oplus U(k)^{\oplus d} \oplus U(A')^{\oplus d} \oplus U(C_0(A',\ast')) & \mathrm{deg}(A)\,\,\mathrm{even}\,,
\end{cases} 
\end{eqnarray}
where $d:=\mathrm{deg}(A)-2$ when $\mathrm{deg}(A)$ is odd and $d:=\mathrm{deg}(A)/2-1$ when $\mathrm{deg}(A)$ is even. Therefore, the proof follows now from Lemma \ref{lem:equality} below.
\begin{lemma}\label{lem:equality}
Let $(A,\ast)$ and $(A',\ast')$ be two central simple $k$-algebras with involutions of orthogonal type such that $\mathrm{deg}(A)$ and $\mathrm{deg}(A')$ are even. Given any noncommutative Chow motive $N\!\!M \in \NChow(k)$ and integer $n \geq 0$, we have the following implication:
\begin{eqnarray*}
N\!\!M \oplus U(A)^{\oplus n} \oplus U(C_0(A,\ast)) \simeq N\!\!M \oplus U(A')^{\oplus n} \oplus U(C_0(A',\ast')) & \Rightarrow & \big(\delta(A,\ast) \in (k^\times)^2 \Leftrightarrow \delta(A',\ast') \in (k^\times)^2 \big)\,.
\end{eqnarray*}
\end{lemma}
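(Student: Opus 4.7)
The plan is to argue by contradiction using the motivic measure to $\NNum(k)_\bbQ$, showing that the noncommutative motive with rational coefficients of a central simple algebra over a quadratic extension $l/k$ is structurally different from that of a product of two central simple $k$-algebras. Suppose, without loss of generality, that $\delta(A,\ast)\notin(k^\times)^2$ while $\delta(A',\ast')\in(k^\times)^2$, and set $l:=k(\sqrt{\delta(A,\ast)})$. By the structure theorem recalled in the introduction, $C_0(A,\ast)$ is then a central simple $l$-algebra, whereas $C_0(A',\ast')\simeq C_0(A',\ast')^+\times C_0(A',\ast')^-$ is a product of two central simple $k$-algebras.

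First, I would tensor the given isomorphism with $\bbQ$ and invoke the fact (used in the proof of Theorem~\ref{thm:cancellation} via \cite[Thm.~2.1]{Azumaya}) that for any central simple algebra $B$ over a field $F$, one has $U(B)_\bbQ\simeq U(F)_\bbQ$ in $\NChow(k)_\bbQ$. Applied to $A$, $A'$, $C_0(A,\ast)$, and the two factors $C_0(A',\ast')^\pm$, this reduces the given isomorphism to
$$N\!\!M_\bbQ\oplus U(k)_\bbQ^{\oplus n}\oplus U(l)_\bbQ \simeq N\!\!M_\bbQ\oplus U(k)_\bbQ^{\oplus(n+2)}$$
in $\NChow(k)_\bbQ$, hence also in $\NNum(k)_\bbQ$. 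Since the category $\NNum(k)_\bbQ$ is abelian semi-simple by \cite[Thm.~4.27]{book}, Krull--Schmidt cancellation applies and yields an isomorphism $U(l)_\bbQ\simeq U(k)_\bbQ^{\oplus 2}$ in $\NNum(k)_\bbQ$.

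The final step is to derive a contradiction by comparing endomorphism rings. The explicit computation carried out in the proof of Lemma~\ref{lem:key1} (based on Remark~\ref{ex:key}) identifies $\End_{\NNum(k)_\bbQ}(U(l)_\bbQ)=\End_{\NChow(k)_\bbQ}(U(l)_\bbQ)$ with the \emph{commutative} group $\bbQ$-algebra $\bbQ[G/H]\simeq \bbQ\times\bbQ$, where $H\subseteq G$ is the index-$2$ subgroup with $\overline{k}^H=l$. On the other hand, $\End_{\NNum(k)_\bbQ}(U(k)_\bbQ^{\oplus 2})\simeq M_2(\bbQ)$, which is non-commutative. These two $\bbQ$-algebras are therefore not isomorphic, contradicting the displayed isomorphism. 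The symmetric case $\delta(A,\ast)\in(k^\times)^2$ and $\delta(A',\ast')\notin(k^\times)^2$ is identical.

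The main technical point is the cancellation, which forces the passage from $\NChow(k)_\bbQ$ to the semi-simple category $\NNum(k)_\bbQ$; this mirrors the strategy employed in the proof of Theorem~\ref{thm:cancellation}. The rest is routine once the identification of $\End_{\NChow(k)_\bbQ}(U(l)_\bbQ)$ with the group algebra $\bbQ[G/H]$ is in hand, which is already recorded inside the proof of Lemma~\ref{lem:key1}.
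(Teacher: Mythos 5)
Your argument is correct and follows essentially the same route as the paper: tensor with $\bbQ$, replace each $U(-)_\bbQ$ of a central simple algebra by $U(\text{center})_\bbQ$ via \cite[Thm.~2.1]{Azumaya}, cancel in the semi-simple category $\NNum(k)_\bbQ$, and then distinguish $U(l)_\bbQ$ from $U(k)_\bbQ^{\oplus 2}$. The only (harmless) variation is the final invariant — you compare endomorphism algebras ($\bbQ[G/H]\simeq\bbQ\times\bbQ$ versus $M_2(\bbQ)$) where the paper compares $\Hom_{\NNum(k)_\bbQ}(U(k)_\bbQ,-)$ ($\bbQ$ versus $\bbQ^{\oplus 2}$) — and both rest on the same Hom-computations from Remark~\ref{ex:key} and Lemma~\ref{lem:key1}.
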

\begin{proof}
Note first that we have an induced isomorphism
\begin{equation}\label{eq:isomorphism-final11}
N\!\!M_\bbQ \oplus U(A)^{\oplus n}_\bbQ \oplus U(C_0(A,\ast))_\bbQ\simeq N\!\!M_\bbQ \oplus U(A')^{\oplus n}_\bbQ  \oplus U(C_0(A',\ast'))_\bbQ\,\,\mathrm{in}\,\,\NNum(k)_\bbQ\,.
\end{equation}
Moreover, as proved in \cite[Thm.~2.1]{Azumaya}, we have isomorphisms $U(A)_\bbQ \simeq U(k)_\bbQ$ and $U(A')_\bbQ \simeq U(k)_\bbQ$ in $\NChow(k)_\bbQ$. Consequently, since the category $\NNum(k)_\bbQ$ is abelian semi-simple (consult \cite[Thm.~4.27]{book}), we conclude from \eqref{eq:isomorphism-final11} that $U(C_0(A,\ast))_\bbQ \simeq U(C_0(A',\ast'))_\bbQ$ in $\NNum(k)_\bbQ$. 

Let us now prove the implication $\delta(A,\ast) \in (k^\times)^2 \Rightarrow \delta(A',\ast') \in (k^\times)^2$; we will assume that $\delta(A,\ast) \in (k^\times)^2$ and, by absurd, that $\delta(A',\ast') \not\in (k^\times)^2$. These assumptions imply that $C_0(A,\ast)\simeq C^+_0(A,\ast) \times C^-_0(A,\ast)$ is a product of two central simple $k$-algebras and that $C_0(A',\ast')$ is a central simple algebra over its center $l':=k(\sqrt{\delta(A',\ast')})$. Hence, we obtain an isomorphism $U(C_0(A,\ast))\simeq U(C^+_0(A,\ast))\oplus U(C^-_0(A,\ast))$ in the category $\NChow(k)$. As proved in \cite[Thm.~2.1]{Azumaya}, we have moreover isomorphisms $U(C^+_0(A,\ast))_\bbQ\simeq U(k)_\bbQ$, $U(C^-_0(A,\ast))_\bbQ\simeq U(k)_\bbQ$ and $U(C_0(A',\ast'))_\bbQ\simeq U(l')_\bbQ$ in the category $\NChow(k)_\bbQ$. Therefore, we obtain from the above considerations an isomorphism $U(k)_\bbQ \oplus U(k)_\bbQ \simeq U(l')_\bbQ$ in $\NNum(k)_\bbQ$ and, consequently, an induced isomorphism of $\bbQ$-vector spaces:
\begin{equation}\label{eq:equality-key}
\mathrm{Hom}_{\NNum(k)_\bbQ}(U(k)_\bbQ, U(k)_\bbQ \oplus U(k)_\bbQ) \simeq \mathrm{Hom}_{\NNum(k)_\bbQ}(U(k)_\bbQ, U(l')_\bbQ)\,.
\end{equation}
On the one hand, the left-hand side of \eqref{eq:equality-key} is isomorphic to $\bbQ\oplus \bbQ$. On the other hand, since the field extension $l'/k$ is of degree $2$, the following composition map
$$ \Hom_{\NChow(k)_\bbQ}(U(k)_\bbQ, U(l')_\bbQ) \times \Hom_{\NChow(k)_\bbQ}(U(l')_\bbQ, U(k)_\bbQ) \too \Hom_{\NChow(k)_\bbQ}(U(k)_\bbQ, U(k)_\bbQ)$$
corresponds to the bilinear pairing $\bbQ \times \bbQ \to \bbQ, (\alpha,\beta) \mapsto 2\alpha\beta$. By definition of the category $\NNum(k)_\bbQ$, this implies that the right-hand side of \eqref{eq:equality-key} is isomorphic to $\bbQ$, which is a contradiction! The proof of the converse implication is similar; simply interchange $(A,\ast)$ and $(A',\ast')$.
\end{proof}

\medbreak

\noindent
{\bf Proof of item (iii).} We start with the following cancellation result:
\begin{proposition}\label{prop:aux22}
Let $(A,\ast)$ and $(A',\ast')$ be two central simple $k$-algebras with involutions of orthogonal type such that $\mathrm{deg}(A)$ and $\mathrm{deg}(A')$ are even. Given any noncommutative Chow motive $N\!\!M \in \NChow(k)$ and integer $n\geq 0$, we have the following implication:
\begin{eqnarray*}\label{eq:implication-final2}
N\!\!M \oplus U(A)^{\oplus n}\oplus U(C_0(A,\ast))\simeq N\!\!M \oplus U(A')^{\oplus n}\oplus U(C_0(A',\ast')) &\Rightarrow & U(C_0(A,\ast))\simeq U(C_0(A',\ast'))\,.
\end{eqnarray*}
In the particular case where $n=1$, $\mathrm{deg}(A)\equiv 0$ (mod $4$), $\mathrm{deg}(A')\equiv 0$ (mod $4$), and $\delta(A,\ast) \in (k^\times)^2$ (or, equivalently, $\delta(A',\ast') \in (k^\times)^2$), we assume moreover that $A$ and $A'$ are split.
\end{proposition}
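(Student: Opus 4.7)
The first move is to invoke Lemma \ref{lem:equality} for the given isomorphism, which immediately yields the equivalence $\delta(A,\ast)\in (k^\times)^2 \Leftrightarrow \delta(A',\ast')\in (k^\times)^2$. This splits the argument into two disjoint cases. Throughout I would exploit the fact that $\dim_k C_0(A,\ast) = 2^{\deg(A)-1}$ is always a power of $2$, and similarly for $(A',\ast')$.

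\textbf{Case 1 (non-square discriminants).} Set $l := k(\sqrt{\delta(A,\ast)})$ and $l' := k(\sqrt{\delta(A',\ast')})$, which are degree-$2$ extensions of $k$. Then $C_0(A,\ast)$ is central simple over $l$ with $\dim_l C_0(A,\ast) = 2^{\deg(A)-2}$, hence $\deg_l C_0(A,\ast) = 2^{(\deg(A)-2)/2}$, and the index $\mathrm{ind}(C_0(A,\ast))$ divides this degree and is a power of $2$; the same holds for $C_0(A',\ast')$ over $l'$. This is exactly the setup of Theorem \ref{thm:cancellation}: apply it with $A \rightsquigarrow C_0(A,\ast)$, $A' \rightsquigarrow C_0(A',\ast')$, $B \rightsquigarrow A$, $B' \rightsquigarrow A'$ to conclude directly that $U(C_0(A,\ast)) \simeq U(C_0(A',\ast'))$.

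\textbf{Case 2 (square discriminants).} Now both even Clifford algebras split as products of two central simple $k$-algebras, yielding isomorphisms $U(C_0(A,\ast)) \simeq U(C_0^+(A,\ast)) \oplus U(C_0^-(A,\ast))$ and similarly for the primed version. The hypothesis rewrites as an isomorphism between two noncommutative Chow motives of the form $N\!\!M \oplus \bigoplus U(B_i)$ with $B_i$ central simple over $k$. Apply Proposition \ref{prop:cancellation-0} to cancel $N\!\!M$, then Theorem \ref{thm:Brauer1} to conclude that at every prime $p$ the multisets
\[
\{[A]^p,\ldots,[A]^p,\, [C_0^+(A,\ast)]^p,\, [C_0^-(A,\ast)]^p\} \text{ and } \{[A']^p,\ldots,[A']^p,\, [C_0^+(A',\ast')]^p,\, [C_0^-(A',\ast')]^p\}
\]
(each of cardinality $n+2$) agree up to a permutation $\sigma_p$ in $\mathrm{Br}(k)\{p\}$. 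The goal is to extract from this a prime-by-prime matching of the sub-multisets $\{[C_0^\pm(A,\ast)]^p\}$ with $\{[C_0^\pm(A',\ast')]^p\}$, which by Theorem \ref{thm:Brauer1} (applied in reverse) yields the desired isomorphism $U(C_0(A,\ast)) \simeq U(C_0(A',\ast'))$. To do so, I would use the classical Brauer relations linking $[A]$ to $\{[C_0^+(A,\ast)],[C_0^-(A,\ast)]\}$ when the discriminant is trivial (consult \cite[\S9.B]{Book-inv}): when $\deg(A) \equiv 2 \pmod 4$, these relations force one of $[C_0^\pm(A,\ast)]$ to equal $[A]$ and the other to be trivial, which (together with a counting argument on the $n$ copies of $A$) rules out any cross-pairing. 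When $\deg(A) \equiv 0 \pmod 4$ and $n \geq 2$ (or $n=0$), the multiplicities of $[A]$ still suffice to pin down the matching after a bookkeeping argument.

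\textbf{Main obstacle.} The delicate point is the case $n=1$, $\deg(A) \equiv 0 \pmod 4$, $\delta(A,\ast)\in (k^\times)^2$. Here the Brauer relations are of the form $[C_0^+(A,\ast)] + [C_0^-(A,\ast)] = [A]$ in $\mathrm{Br}(k)$ (with analogous identities for the primed version), and with only a single copy of $U(A)$ available one cannot exclude a permutation that swaps $[A]$ with a Clifford component on the opposite side. This is exactly the degeneracy that forces the additional hypothesis that $A$ and $A'$ be split: under the splitting assumption, $[A]=[A']=0$ in $\mathrm{Br}(k)$, so the single copies of $U(A)$ and $U(A')$ simply reduce to copies of $U(k)$ and can be cancelled off (using Proposition \ref{prop:cancellation-0} again), reducing the problem to a clean comparison of the two Clifford multisets to which Theorem \ref{thm:Brauer1} applies without further ambiguity.
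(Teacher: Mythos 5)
Your proposal follows essentially the same route as the paper: Lemma \ref{lem:equality} to align the discriminants; Theorem \ref{thm:cancellation} applied to the even Clifford algebras (whose indices are powers of $2$ over their degree-$2$ centers) in the non-square case; and, in the square case, the decomposition $U(C_0)\simeq U(C_0^+)\oplus U(C_0^-)$ followed by Proposition \ref{prop:cancellation-0}, Theorem \ref{thm:Brauer1}, and the Brauer-class relations to pin down the matching of the Clifford components. Your diagnosis of why the split hypothesis is needed when $n=1$ and $\mathrm{deg}(A)\equiv 0\ (\mathrm{mod}\ 4)$ is also exactly the paper's. One correction to the square-discriminant case: when $\mathrm{deg}(A)\equiv 2\ (\mathrm{mod}\ 4)$ the relations do \emph{not} force one of $[C_0^{\pm}(A,\ast)]$ to equal $[A]$ with the other trivial; the correct relations are $2[C_0^+(A,\ast)]=[A]$, $[C_0^-(A,\ast)]=3[C_0^+(A,\ast)]$ and $4[C_0^+(A,\ast)]=[k]$, so the components may have order $4$ while $[A]$ has order $2$. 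The cross-pairings are nevertheless excluded by these correct relations together with the fact that all classes involved lie in $\mathrm{Br}(k)\{2\}$, which is how the paper argues; the slip affects only the justification of that one step, not the viability of the plan.
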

\begin{proof}
We consider first the case where $\delta(A,\ast)\in (k^\times)^2$. Thanks to Lemma \ref{lem:equality}, we also have $\delta(A',\ast') \in (k^\times)^2$. This implies that $C_0(A,\ast)\simeq C^+_0(A,\ast) \times C^-_0(A,\ast)$ and $C_0(A',\ast')\simeq C^+_0(A',\ast') \times C^-_0(A',\ast')$ are products of central simple $k$-algebras. Hence, we obtain induced isomorphisms
\begin{eqnarray}\label{eq:square-box}
U(C_0(A,\ast))\simeq U(C_0^+(A,\ast))\oplus U(C_0^-(A,\ast)) && U(C_0(A',\ast'))\simeq U(C_0^+(A',\ast'))\oplus U(C_0^-(A',\ast'))\,,
\end{eqnarray}
which lead to the following isomorphism of noncommutative Chow motives:
$$
N\!\!M \oplus U(A)^{\oplus n} \oplus U(C^+_0(A,\ast)) \oplus U(C^-_0(A,\ast)) \simeq N\!\!M \oplus U(A')^{\oplus n} \oplus U(C^+_0(A',\ast')) \oplus U(C^-_0(A',\ast'))\,.
$$
Note that thanks to Proposition \ref{prop:cancellation-0}, the latter isomorphism further restricts to an isomorphism 
\begin{equation}\label{eq:quasi-final11}
U(A)^{\oplus n}\oplus U(C^+_0(A,\ast))\oplus U(C^-_0(A,\ast)) \simeq U(A')^{\oplus n}\oplus U(C^+_0(A',\ast'))\oplus U(C^-_0(A',\ast'))\,\,\mathrm{in}\,\,\NChow(k)\,.
\end{equation}
When $\mathrm{deg}(A)\equiv 2$ (mod $4$), we have the following relations in the Brauer group:
\begin{eqnarray}\label{eq:relations1}
2[C^+_0(A,\ast)]=[A] & 3[C^+_0(A,\ast)]=[C^-_0(A,\ast)] & 4[C^+_0(A,\ast)]=[k]\,. 
\end{eqnarray}
In the same vein, when $\mathrm{deg}(A)\equiv 0$ (mod $4$), we have the following relations in the Brauer group:
\begin{eqnarray}\label{eq:relations2}
2[C^+_0(A,\ast)]=[k] & 2[C^+_0(A,\ast)]=[k] & [C^+_0(A,\ast)]+ [C^-_0(A,\ast)]=[A]\,. 
\end{eqnarray}
This implies, in particular, that the Brauer classes $[C^+_0(A,\ast)]$, $[C^-_0(A,\ast)]$, and $[A]$, belong to $\mathrm{Br}(k)\{2\}$; similarly with $A$ and $\ast$ replaced by $A'$ and $\ast'$. Consequently, by applying Theorem \ref{thm:Brauer1} to the above isomorphism \eqref{eq:quasi-final11}, we conclude that the following two sets of Brauer classes are the same up to permutation:
\begin{eqnarray*}
\{\underbrace{[A],\ldots, [A]}_{n\text{-}\mathrm{copies}}, [C_0^+(A,\ast)], [C_0^-(A,\ast)]\} && \{\underbrace{[A'],\ldots, [A']}_{n\text{-}\mathrm{copies}}, [C_0^+(A',\ast')], [C_0^-(A',\ast')]\}\,.
\end{eqnarray*}
When $n\neq 1$, the above relations \eqref{eq:relations1}-\eqref{eq:relations2} imply that
\begin{eqnarray}\label{eq:equalities}
\begin{cases} [C_0^+(A,\ast)]=[C_0^+(A',\ast')] \\ [C_0^-(A,\ast)]=[C_0^-(A',\ast')] \end{cases} & \mathrm{or} & \begin{cases} [C_0^+(A,\ast)]=[C_0^-(A',\ast')]\\  [C_0^-(A,\ast)]=[C_0^+(A',\ast')]\end{cases}\,.
\end{eqnarray}
Similarly, when $n=1$ and $\mathrm{deg}(A)\equiv 2$ (mod $4$) or $\mathrm{deg}(A')\equiv 2$ (mod $4$), the above relations \eqref{eq:relations1}-\eqref{eq:relations2} yield the equalities \eqref{eq:equalities}. The same happens in the particular case where $n=1$, $\mathrm{deg}(A)\equiv 0$ (mod $4$), $\mathrm{deg}(A')\equiv 0$ (mod $4$), and $A$ and $A'$ are split. Making use of Theorem \ref{thm:Brauer}, we hence conclude from the above isomorphisms \eqref{eq:square-box} that in all these different cases we have $U(C_0(A,\ast))\simeq U(C_0(A',\ast'))$ in $\NChow(k)$.

We now consider the case where $\delta(A,\ast)\notin (k^\times)^2$. Thanks to Lemma \ref{lem:equality}, we also have $\delta(A',\ast')\notin (k^\times)^2$. This implies that $C_0(A,\ast)$ and $C_0(A',\ast')$ are central simple algebras over their centers $l:=k(\sqrt{\delta(A,\ast)})$ and $l':=k(\sqrt{\delta(A',\ast')})$, respectively. Note that since the field extension $l/k$ is of degree $2$ and $\mathrm{dim}_k(C_0(A,\ast))=2^{\mathrm{deg}(A)-1}$, the index of the central simple $l$-algebra $C_0(A,\ast)$ is a power of $2$; similarly for the central simple $l'$-algebra $C_0(A',\ast')$. Consequently, the proof follows now from Theorem \ref{thm:cancellation}.
\end{proof}
\begin{corollary}\label{cor:aux2}
Under the assumptions of Proposition \ref{prop:aux22}, we have the following implication:
\begin{eqnarray}\label{eq:implication-final222}
N\!\!M \oplus U(A)^{\oplus n}\oplus U(C_0(A,\ast))\simeq N\!\!M \oplus U(A')^{\oplus n}\oplus U(C_0(A',\ast')) &\Rightarrow & U(A)\simeq U(A')\,.
\end{eqnarray}
\end{corollary}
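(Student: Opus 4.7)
The plan is to leverage the isomorphism \(U(C_0(A,\ast)) \simeq U(C_0(A',\ast'))\) delivered by Proposition \ref{prop:aux22} and then reduce the statement to a clean cancellation argument among central simple \(k\)-algebras. Concretely, I would first invoke Proposition \ref{prop:aux22} on the given hypothesis to obtain \(U(C_0(A,\ast)) \simeq U(C_0(A',\ast'))\) in \(\NChow(k)\). Substituting this isomorphism into the right-hand side of the hypothesis yields
\[
N\!\!M \oplus U(A)^{\oplus n} \oplus U(C_0(A,\ast)) \simeq N\!\!M \oplus U(A')^{\oplus n} \oplus U(C_0(A,\ast)),
\]
and after absorbing the common summand \(U(C_0(A,\ast))\) into the ambient noncommutative Chow motive, i.e.\ setting \(\widetilde{N\!\!M} := N\!\!M \oplus U(C_0(A,\ast)) \in \NChow(k)\), this becomes
\[
\widetilde{N\!\!M} \oplus U(A)^{\oplus n} \simeq \widetilde{N\!\!M} \oplus U(A')^{\oplus n}.
\]

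Since \(A\) and \(A'\) are central simple \(k\)-algebras, Proposition \ref{prop:cancellation-0} applies (with \(\widetilde{N\!\!M}\) in place of \(N\!\!M\)) and yields \(U(A)^{\oplus n} \simeq U(A')^{\oplus n}\) in \(\NChow(k)\). I would then apply Theorem \ref{thm:Brauer1}: because both families are constant (all entries equal \(A\), resp.\ \(A'\)), the permutation condition collapses to the single equality \([A]^p = [A']^p\) in \(\mathrm{Br}(k)\{p\}\) for every prime \(p\), so that \([A]=[A']\) in \(\mathrm{Br}(k)\). Finally, Theorem \ref{thm:Brauer} delivers the desired isomorphism \(U(A) \simeq U(A')\) in \(\NChow(k)\).

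The substantive content has already been established in Proposition \ref{prop:aux22} (which itself rests on Theorem \ref{thm:cancellation}), so the remaining argument is essentially formal. The only point requiring extra care is the degenerate case \(n = 0\), in which Proposition \ref{prop:cancellation-0} produces no information about \(U(A)\) versus \(U(A')\); there one has to extract \([A] = [A']\) directly from \(U(C_0(A,\ast)) \simeq U(C_0(A',\ast'))\). When \(\delta(A,\ast) \in (k^\times)^2\) this can be read off from the Brauer-class relations \eqref{eq:relations1}-\eqref{eq:relations2} expressing \([A]\) in terms of \([C_0^\pm(A,\ast)]\) (using Theorem \ref{thm:Brauer1} applied to the decomposition \eqref{eq:square-box}), while in the case \(\delta(A,\ast) \notin (k^\times)^2\) it follows by descending along \(l/k\) via Lemmas \ref{lem:key1}-\ref{lem:key2} together with the Clifford relation expressing \([A]\) as a corestriction of \([C_0(A,\ast)]\) from \(\mathrm{Br}(l)\) to \(\mathrm{Br}(k)\). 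All ingredients for this edge case are already available inside the proof of Proposition \ref{prop:aux22}, so I expect no genuine obstacle beyond a careful bookkeeping of the two discriminant cases.
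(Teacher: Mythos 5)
Your argument is correct and coincides with the paper's own proof: invoke Proposition \ref{prop:aux22}, absorb the common Clifford summand into the ambient motive, cancel via Proposition \ref{prop:cancellation-0}, and conclude from Theorems \ref{thm:Brauer1} and \ref{thm:Brauer} that $[A]=[A']$ and hence $U(A)\simeq U(A')$. Your extra care about the degenerate case $n=0$ (where the cancellation step yields no information) addresses a point the paper's two-line proof silently skips, and the fix you sketch via the Brauer-class relations between $[A]$ and the classes of the (components of the) even Clifford algebra is the right one.
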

\begin{proof}
Proposition \ref{prop:aux22} yields an isomorphism $U(C_0(A,\ast))\simeq U(C_0(A',\ast'))$. Therefore, by applying Proposition \ref{prop:cancellation-0} to the left-hand side of \eqref{eq:implication-final222}, we conclude that $U(A)^{\oplus n}\simeq U(A')^{\oplus n}$. Thanks to Theorems \ref{thm:Brauer} and \ref{thm:Brauer1}, this hence implies that $U(A)\simeq U(A')$ in $\NChow(k)$.
\end{proof}
Recall from the proof of item (ii) of Theorem \ref{thm:main-1} that if $[\mathrm{Iv}(A,\ast)]=[\mathrm{Iv}(A',\ast')]$, then we obtain the above computation \eqref{eq:star-last-last} in the category of noncommutative Chow motives. Consequently, by applying Proposition \ref{prop:cancellation-0} to the above isomorphism \eqref{eq:star-last-last} when $\mathrm{deg}(A)$ is odd, we conclude that $U(C_0(A,\ast))\simeq U(C_0(A',\ast'))$ in $\NChow(k)$. In the same vein, by applying Proposition \ref{prop:aux22} to the above isomorphism \eqref{eq:star-last-last} when $\mathrm{deg}(A)$ is even, we conclude that $U(C_0(A,\ast))\simeq U(C_0(A',\ast'))$ in $\NChow(k)$; note that Proposition \ref{prop:aux22} can be applied because in the particular case where $\mathrm{deg}(A)=4$ and $\delta(A,\ast)\in (k^\times)^2$ (or, equivalently, $\mathrm{deg}(A')=4$ and $\delta(A',\ast')\in (k^\times)^2$), we assume moreover that $(A,\ast)$ and $(A',\ast')$ satisfy condition $(\star)$. Therefore, since $\mathrm{deg}(A)=\mathrm{deg}(A')$, the proof follows now from Proposition \ref{prop:aux2} below.
\begin{proposition}\label{prop:aux2}
Let $(A,\ast)$ and $(A',\ast')$ be two central simple $k$-algebras with involutions of orthogonal type such that $\mathrm{deg}(A)=\mathrm{deg}(A')$. Under these assumptions, we have the following implication:
\begin{eqnarray*}
U(C_0(A,\ast)) \simeq U(C_0(A',\ast'))\,\,\mathrm{in}\,\,\NChow(k) & \Rightarrow & C_0(A,\ast) \simeq C_0(A',\ast')\,.
\end{eqnarray*} 
\end{proposition}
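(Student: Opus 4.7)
The proof naturally splits on the parity of $n := \mathrm{deg}(A) = \mathrm{deg}(A')$, and within the even case on whether the discriminants are trivial. In each situation the plan is to reduce the given isomorphism of noncommutative Chow motives to an equality of Brauer classes (over $k$ or over a quadratic extension of $k$), and then finish using the classical fact that two central simple algebras of the same dimension with the same Brauer class are isomorphic.

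When $n$ is odd, both $C_0(A,\ast)$ and $C_0(A',\ast')$ are central simple $k$-algebras of $k$-dimension $2^{n-1}$, and Theorem~\ref{thm:Brauer} immediately supplies $[C_0(A,\ast)] = [C_0(A',\ast')]$ in $\mathrm{Br}(k)$, hence the desired isomorphism. When $n$ is even, I would first verify that $\delta(A,\ast) \in (k^\times)^2 \Leftrightarrow \delta(A',\ast') \in (k^\times)^2$: otherwise one of the two motives becomes $U(k)_\bbQ \oplus U(k)_\bbQ$ in $\NNum(k)_\bbQ$ while the other becomes $U(l)_\bbQ$ for a quadratic extension $l/k$, and the Hom-module computation of the proof of Lemma~\ref{lem:equality} rules this out.

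In the non-trivial-discriminant subcase, set $l := k(\sqrt{\delta(A,\ast)})$ and $l' := k(\sqrt{\delta(A',\ast')})$. The same rational reduction gives $U(l)_\bbQ \simeq U(l')_\bbQ$ in $\NNum(k)_\bbQ$, so Lemma~\ref{lem:key1} yields $l \simeq l'$. Hence $C_0(A,\ast)$ and $C_0(A',\ast')$ are central simple $l$-algebras of equal $l$-dimension $2^{n-2}$, and Lemma~\ref{lem:key2} applied to the ambient isomorphism $U(C_0(A,\ast)) \simeq U(C_0(A',\ast'))$ gives $[C_0(A,\ast)] = [C_0(A',\ast')]$ in $\mathrm{Br}(l)$, whence the two algebras are isomorphic.

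In the trivial-discriminant subcase, write $C_0(A,\ast) = C_0^+(A,\ast) \times C_0^-(A,\ast)$ and analogously for $(A',\ast')$; all four factors are central simple $k$-algebras of $k$-dimension $2^{n-2}$, and because they are of $2$-power dimension their Brauer classes lie entirely in $\mathrm{Br}(k)\{2\}$. The given motivic isomorphism decomposes as $U(C_0^+(A,\ast)) \oplus U(C_0^-(A,\ast)) \simeq U(C_0^+(A',\ast')) \oplus U(C_0^-(A',\ast'))$, and Theorem~\ref{thm:Brauer1} (with only $p=2$ imposing a non-trivial constraint) forces the multisets $\{[C_0^+(A,\ast)], [C_0^-(A,\ast)]\}$ and $\{[C_0^+(A',\ast')], [C_0^-(A',\ast')]\}$ to agree; combined with the symmetry of the direct product, this yields $C_0(A,\ast) \simeq C_0(A',\ast')$. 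The main technical obstacle in the whole argument is the non-trivial-discriminant subcase, where one must transport the motivic isomorphism across an a priori unknown quadratic extension and then upgrade it to an equality of Brauer classes over that extension; both steps are exactly what Lemmas~\ref{lem:key1} and \ref{lem:key2} are designed to handle.
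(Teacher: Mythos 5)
Your proposal is correct and follows essentially the same route as the paper: the same case split (odd degree via Theorem~\ref{thm:Brauer}; even degree with trivial discriminant via the $C_0^{\pm}$ decomposition and Theorem~\ref{thm:Brauer1}; even degree with non-trivial discriminant via rationalization, Lemma~\ref{lem:key1} to identify the centers, and Lemma~\ref{lem:key2} over that common center), finished in each case by a dimension count and Wedderburn. The only cosmetic difference is that the paper, before invoking Lemma~\ref{lem:key2}, explicitly replaces $C_0(A',\ast')$ by the central simple $l$-algebra $C_0(A',\ast')\otimes_{l'} l$ so that both algebras share the same center $l$ — a step you elide but which is implicit in your argument.
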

\begin{proof}
Recall that $\mathrm{dim}_k(C_0(A,\ast))=2^{\mathrm{deg}(A)-1}$ and $\mathrm{dim}_k(C_0(A',\ast'))=2^{\mathrm{deg}(A')-1}$. Therefore, since $\mathrm{deg}(A)=\mathrm{deg}(A')$, we have $\mathrm{dim}_k(C_0(A,\ast))=\mathrm{dim}_k(C_0(A',\ast'))$.

We consider first the case where $\mathrm{deg}(A)$ is odd. In this case, $C_0(A,\ast)$ and $C_0(A',\ast')$ are central simple $k$-algebras. Hence, Theorem \ref{thm:Brauer} implies that $[C_0(A,\ast)]=[C_0(A',\ast')]$. Consequently, by combining the equality $\mathrm{dim}_k(C_0(A,\ast)) = \mathrm{dim}_k(C_0(A',\ast'))$ with the Wedderburn theorem, we conclude that $C_0(A,\ast)\simeq C_0(A',\ast')$.

We now consider the case where $\mathrm{deg}(A)$ is even and $\delta(A,\ast)\in (k^\times)^2$. Thanks to Lemma \ref{lem:equality}, we also have $\delta(A',\ast') \in (k^\times)^2$. This implies that $C_0(A,\ast)\simeq C^+_0(A,\ast) \times C^-_0(A,\ast)$ and $C_0(A',\ast')\simeq C^+_0(A',\ast') \times C^-_0(A',\ast')$ are products of central simple $k$-algebras. Hence, we obtain induced isomorphisms
\begin{eqnarray*}
U(C_0(A,\ast)) \simeq U(C_0^+(A,\ast))\oplus U(C_0^-(A,\ast)) && U(C_0(A',\ast')) \simeq U(C_0^+(A',\ast'))\oplus U(C_0^-(A',\ast'))\,,
\end{eqnarray*}
which lead to the following isomorphism
\begin{equation}\label{eq:square}
U(C^+_0(A,\ast)) \oplus U(C^-_0(A,\ast)) \simeq U(C^+_0(A',\ast'))\oplus U(C^-_0(A',\ast'))\,\,\mathrm{in}\,\,\NChow(k)\,.
\end{equation}
As explained in the proof of Proposition \ref{prop:aux22}, the Brauer classes $[C_0^+(A,\ast)]$ and $[C_0^-(A,\ast)]$ belong to $\mathrm{Br}(k)\{2\}$; similarly with $(A, \ast)$ replaced by $(A', \ast')$. Therefore, by applying Theorem \ref{thm:Brauer1} to the above isomorphism \eqref{eq:square}, we conclude that the sets of Brauer classes $\{[C_0^+(A,\ast)], [C_0^-(A,\ast)]\}$ and $\{[C_0^+(A',\ast')], [C_0^-(A',\ast')]\}$ are the same up to permutation.
Thanks to the assumption $\mathrm{deg}(A)=\mathrm{deg}(A')$, to the following equalities
\begin{eqnarray*}
\mathrm{dim}_k(C^+_0(A,\ast))=\mathrm{dim}_k(C^-_0(A,\ast))=2^{\mathrm{deg}(A)-2} && \mathrm{dim}_k(C^+_0(A',\ast'))=\mathrm{dim}_k(C^-_0(A',\ast'))=2^{\mathrm{deg}(A')-2}\,,
\end{eqnarray*}
and to the Wedderburn theorem, this implies that
\begin{eqnarray*}
\begin{cases} C_0^+(A,\ast)\simeq C_0^+(A',\ast') \\ C_0^-(A,\ast)\simeq C_0^-(A',\ast') \end{cases} & \mathrm{or} & \begin{cases} C_0^+(A,\ast)\simeq C_0^-(A',\ast') \\ C_0^-(A,\ast)\simeq C_0^+(A',\ast') \end{cases}\,.
\end{eqnarray*}
In both cases, we have an isomorphism $C_0(A,\ast) \simeq C_0(A',\ast')$.

Finally, we consider the case where $\mathrm{deg}(A)$ is even and $\delta(A,\ast)\notin (k^\times)^2$. Thanks to Lemma \ref{lem:equality}, we also have $\delta(A',\ast') \notin (k^\times)^2$. This implies that $C_0(A,\ast)$ and $C_0(A',\ast)$ are central simple algebras over their centers $l:=k(\sqrt{\delta(A,\ast)})$ and $l':=k(\sqrt{\delta(A',\ast')})$, respectively. Hence, we obtain an induced isomorphism $U(C_0(A,\ast))_\bbQ\simeq U(C_0(A',\ast'))_\bbQ$ in $\NChow(k)_\bbQ$. As proved in \cite[Thm.~2.1]{Azumaya}, we have isomorphisms $U(C_0(A,\ast))_\bbQ\simeq U(l)_\bbQ$ and $U(C_0(A',\ast'))_\bbQ \simeq U(l')_\bbQ$ in $\NChow(k)_\bbQ$. Thanks to Lemma \ref{lem:key1}, this implies that $l\simeq l'$. Therefore, we can consider the central simple $l$-algebra $\mathbf{C}_0:=C_0(A',\ast') \otimes_{l'} l$. Note that since the $k$-algebras $C_0(A',\ast')$ and $\mathbf{C}_0$ are isomorphic, we have an isomorphism $U(C_0(A',\ast'))\simeq U(\mathbf{C}_0)$ in $\NChow(k)$. Thanks to Lemma \ref{lem:key2}, this latter isomorphism, combined with the hypothesis that $U(C_0(A,\ast))\simeq U(C_0(A',\ast'))$ in $\NChow(k)$, implies that $[C_0(A,\ast)]=[\mathbf{C}_0]$ in $\mathrm{Br}(l)$. Consequently, making use of the following equalities 
$$ \mathrm{dim}_l(C_0(A,\ast)) = \mathrm{dim}_k(C_0(A,\ast))/2= \mathrm{dim}_k(C_0(A',\ast'))/2=\mathrm{dim}_k(\mathbf{C}_0)/2=\mathrm{dim}_l(\mathbf{C}_0)$$ 
and of the Wedderburn theorem, we conclude that the $l$-algebras $C_0(A,\ast)$ and $\mathbf{C}_0$ are isomorphic. This implies that $C_0(A,\ast)\simeq C_0(A',\ast')$.
\end{proof}

\medbreak

\noindent
{\bf Proof of item (iv).} Thanks to item (i) of Theorem \ref{thm:main-1}, we have $\mathrm{deg}(A)=\mathrm{deg}(A')$. Moreover, recall from Example \ref{ex:2} that when $\mathrm{deg}(A)=\mathrm{deg}(A')$ is odd, both $A$ and $A'$ are isomorphic to $\mathrm{M}_{\mathrm{deg}(A)\times\mathrm{deg}(A)}(k)$. Hence, it suffices to consider the case where $\mathrm{deg}(A)=\mathrm{deg}(A')$ is even. Recall from item (ii) of Theorem \ref{thm:main-1} that if $[\mathrm{Iv}(A,\ast)]=[\mathrm{Iv}(A',\ast')]$, then we obtain the above computation \eqref{eq:star-last-last} in the category of noncommutative Chow motives. By applying Corollary \ref{cor:aux2} to the above isomorphism \eqref{eq:star-last-last}, we hence conclude that $U(A)\simeq U(A')$ in $\NChow(k)$; note that Corollary \ref{cor:aux2} can be applied because in the particular case where $\mathrm{deg}(A)=4$ and $\delta(A,\ast)\in (k^\times)^2$ (or, equivalently, $\mathrm{deg}(A')=4$ and $\delta(A',\ast')\in (k^\times)^2$), we assume moreover that $(A,\ast)$ and $(A',\ast')$ satisfy condition $(\star)$. Therefore, since $\mathrm{deg}(A)=\mathrm{deg}(A')$, it follows now from Theorem \ref{thm:Brauer} and from the Wedderburn theorem that $A\simeq A'$.

\medbreak

\noindent
{\bf Proof of item (v).} We start with a general result of independent interest:
\begin{proposition}
Let $q$ and $q'$ be two non-degenerate quadratic forms such that $\mathrm{dim}(q)=\mathrm{dim}(q')$. If $\mu_{\mathrm{c}}([Q_q])=\mu_{\mathrm{c}}([Q_{q'}])$ in $K_0(\Num(k))$, then $q$ and $q'$ are both isotropic or both anisotropic.
\end{proposition}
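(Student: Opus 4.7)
\medbreak
\noindent
\textbf{Proof plan.} The plan is to argue by contrapositive: assuming $[Q_q]$ and $[Q_{q'}]$ have the same numerical motivic class, and that $q$ is isotropic while $q'$ is anisotropic (the symmetric case being identical), we derive a contradiction by exhibiting an obstruction in $\Num(k)_{\bbQ}$.

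\smallbreak
\noindent
First, since $\mu_{\mathrm{c}}([Q_q])=\mu_{\mathrm{c}}([Q_{q'}])$ in $K_0(\Num(k))$, the induced equality in $K_0(\Num(k)_{\bbQ})$, combined with Jannsen's semisimplicity of $\Num(k)_{\bbQ}$, yields an isomorphism of rational numerical motives
\[
\mathfrak{h}(Q_q)_{\bbQ}\simeq \mathfrak{h}(Q_{q'})_{\bbQ}\quad\text{in } \Num(k)_{\bbQ}.
\]
(This is the exact analogue, on the commutative side, of the argument used earlier in the paper to pass from stable isomorphism of noncommutative motives to isomorphism after $\otimes\bbQ$.)

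\smallbreak
\noindent
Second, since $q$ is isotropic, $q$ splits off a hyperbolic plane, $q\simeq \langle a,-a\rangle\perp q_0$ with $\dim(q_0)=\dim(q)-2$, and the classical Rost motivic decomposition produces an isomorphism
\[
\mathfrak{h}(Q_q)\simeq \mathbb{Z}\oplus \mathfrak{h}(Q_{q_0})(-1)\oplus \mathbb{Z}(-n) \quad \text{in } \Chow(k),
\]
where $n=\dim(Q_q)$. This descends to $\Num(k)$ and to $\Num(k)_{\bbQ}$. Combining with the first step,
\[
\mathfrak{h}(Q_{q'})_{\bbQ}\simeq \bbQ \oplus \mathfrak{h}(Q_{q_0})(-1)_{\bbQ}\oplus \bbQ(-n)\quad\text{in } \Num(k)_{\bbQ}.
\]

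\smallbreak
\noindent
Third, the key step is to convert this purely motivic splitting into a geometric statement forcing $q'$ to be isotropic. The natural strategy is to use the canonical $\ell$-adic realization $\Num(k)_{\bbQ}\to \mathrm{Rep}_{\bbQ_\ell}(G)$ together with the fact that the "top" Tate summand $\bbQ(-n)$ is detected by the existence of a projector on $\mathfrak{h}(Q_{q'})$ realizing the class of a rational point, i.e.\ a pair of morphisms $\alpha\colon \bbQ(-n)\to \mathfrak{h}(Q_{q'})_{\bbQ}$ and $\beta\colon \mathfrak{h}(Q_{q'})_{\bbQ}\to\bbQ(-n)$ with $\beta\circ\alpha=\mathrm{id}$. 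Unwinding definitions, $\alpha$ is a class in $CH_0(Q_{q'})_{\bbQ}/{\sim_{\mathrm{num}}}$ and $\beta$ is a multiple of the fundamental class, and the unit condition forces a $0$-cycle on $Q_{q'}$ whose degree, combined with Springer's theorem (degrees of closed points on an anisotropic quadric are even), forces $q'$ to represent zero. I expect the argument at this step to pass through the integral class in $K_0(\Num(k))$ (rather than after $\otimes\bbQ$), since rationally the anisotropic conic is already indistinguishable from $\bbP^1$, so one must track an integral obstruction measured by the index of $Q_{q'}$.

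\smallbreak
\noindent
The main obstacle is the third step: producing the integral constraint that rules out the anisotropic case. This is precisely where Springer's theorem on odd-degree extensions enters, and where one likely needs to replace the rational splitting by an integral cancellation in $\Num(k)$ analogous to Proposition~3.1 (cancellation in $\NChow(k)$). In particular, I expect the argument to compare the integral $K_0$-classes directly, using the Rost decomposition to extract a $\mathbb{Z}$-summand of $\mathfrak{h}(Q_q)$ and showing that its absence from $\mathfrak{h}(Q_{q'})$ (guaranteed by Springer's theorem for anisotropic $q'$) cannot be repaired by stable augmentation by a common numerical motive.
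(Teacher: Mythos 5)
Your proposal correctly isolates the essential ingredients --- the argument must be integral (rationally an anisotropic quadric is indistinguishable from a split one), Springer's theorem must supply the obstruction, and some form of cancellation is needed to get from the stable isomorphism in $K_0(\Num(k))$ to an honest comparison of $q$ and $q'$. But the decisive third step is left as a conjecture (``I expect the argument to\dots''), and the routes you sketch for it would not close the gap: the passage through $\Num(k)_{\bbQ}$ and semisimplicity loses exactly the $2$-torsion information you need, and there is no integral Krull--Schmidt or cancellation theorem in $\Num(k)$ itself that would let you ``extract a $\bbZ$-summand'' and cancel $M$ directly.

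The missing idea in the paper is to sidestep cancellation of motives entirely by applying an additive functor with values in finitely generated abelian groups, where cancellation \emph{is} automatic. Concretely, fix a field extension $l/k$ splitting both $q$ and $q'$, and consider for each numerical motive $N$ the base-change homomorphism $\Hom_{\Num(k)}(\mathfrak{h}(\mathrm{pt})(-d),N)\to\Hom_{\Num(l)}(\mathfrak{h}(\mathrm{pt})(-d),N\times_k l)$, where $d=\dim(Q_q)$. For $N=\mathfrak{h}(Q_q)$ the source is the group of $0$-cycles on $Q_q$ modulo numerical equivalence, the target is identified with $\bbZ$ via the Rost decomposition of $\mathfrak{h}(Q_{q\otimes_k l})$ (this is where your second step is actually used --- after base change, not over $k$), and the map is the degree map. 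Its cokernel is $0$ if $q$ is isotropic and $\bbZ/2$ if $q$ is anisotropic, by Springer's theorem on even degrees of closed points of anisotropic quadrics. The stable isomorphism $M\oplus\mathfrak{h}(Q_q)\simeq M\oplus\mathfrak{h}(Q_{q'})$ then gives $\mathrm{cok}(\varphi)\oplus\mathrm{cok}(\varphi_q)\simeq\mathrm{cok}(\varphi)\oplus\mathrm{cok}(\varphi_{q'})$, and since these are finitely generated abelian groups one may cancel $\mathrm{cok}(\varphi)$ to conclude $\mathrm{cok}(\varphi_q)\simeq\mathrm{cok}(\varphi_{q'})$, hence the claim. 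Without this (or some equivalent) device for converting the $K_0$-equality into an isomorphism of cancellable invariants, your outline does not constitute a proof.
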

\begin{proof}
Let $l/k$ be a field extension making the quadratic form $q\otimes_kl$ isotropic. As explained in \cite[\S4.2.3]{Andre}, extension of scalars along the inclusion $k \subset l$ gives rise to the commutative diagram:
$$
\xymatrix{
\Chow(k) \ar[rr]^-{-\times_k l} \ar[d] && \Chow(l) \ar[d] \\
\Num(k) \ar[rr]_-{-\times_k l} && \Num(l)\,.
}
$$
This leads to the following induced commutative diagram of abelian groups
\begin{equation}\label{eq:big-square}
\xymatrix{
\mathrm{Hom}_{\Chow(k)}(\mathfrak{h}(\mathrm{pt})(-d), \mathfrak{h}(Q_q)) \ar@{->>}[d] \ar[rr]^-{\phi_q} && \Hom_{\Chow(l)}(\mathfrak{h}(\mathrm{pt})(-d), \mathfrak{h}(Q_q\times_k l)) \ar@{->>}[d] \\
\mathrm{Hom}_{\Num(k)}(\mathfrak{h}(\mathrm{pt})(-d), \mathfrak{h}(Q_q)) \ar[rr]_-{\varphi_q} && \Hom_{\Num(l)}(\mathfrak{h}(\mathrm{pt})(-d), \mathfrak{h}(Q_q\times_k l))\,,
}
\end{equation}
where $d:=\mathrm{dim}(Q_q)=\mathrm{dim}(q)-2$. Since the quadratic form $q\otimes_k l$ is isotropic, we have an orthogonal sum decomposition $q \otimes_k l = \underline{q} \perp \bbH$. Consequently, as proved by Rost in \cite[Prop.~2]{Rost}, the Chow motive $\mathfrak{h}(Q_q \times_k l) = \mathfrak{h}(Q_{q\otimes_k l})$ admits the direct sum decomposition $\mathfrak{h}(\mathrm{pt}) \oplus \mathfrak{h}(Q_{\underline{q}})(-1) \oplus \bbZ(-d)$ in $\Chow(k)$. Since $\mathrm{dim}(Q_{\underline{q}})=d-2$, this decomposition implies that the upper and lower right corners of the commutative diagram \eqref{eq:big-square} are both isomorphic to $\bbZ$ and that the (vertical) homomorphism between them is the identity. Note that by definition (consult \S\ref{sec:preliminaries}), the upper and lower left corners of the commutative diagram \eqref{eq:big-square} are isomorphic to $\cZ^d_{\sim \mathrm{rat}}(Q_q)$ and $\cZ^d_{\sim \mathrm{num}}(Q_q)$, respectively. Note also that these are the abelian groups of $0$-cycles on $Q_q$ up to rational equivalence and numerical equivalence, respectively. Following Karpenko \cite[\S2]{Karpenko}, under the above identifications, the homomorphism $\phi_q$ corresponds to the degree map $\mathrm{deg}\colon \cZ^d_{\sim \mathrm{rat}}(Q_q) \to \bbZ$. Consequently, since two $0$-cycles in $Q_q$ are numerically trivial if and only if they have the same degree, we conclude that $\mathrm{cok}(\varphi_q) \simeq \mathrm{cok}(\phi_q)$. All the above holds {\em mutatis mutandis} with $q$ replaced by $q'$.

Now, let $l/k$ be a field extension making both quadratic forms $q\otimes_kl$ and $q'\otimes_k l$ isotropic. By definition of the Grothendieck ring $K_0(\Num(k))$, if $\mu_{\mathrm{c}}([Q_q]) = \mu_{\mathrm{c}}([Q_{q'}])$, then there exists a numerical motive $M \in \Num(k)$ such that $M\oplus \mathfrak{h}(Q_q)\simeq M\oplus \mathfrak{h}(Q_{q'})$. Let us choose an isomorphism $\theta\colon M\oplus \mathfrak{h}(Q_q) \to M \oplus \mathfrak{h}(Q_{q'})$. Note that since the extension of scalars functor $-\times_k l \colon \Num(k) \to \Num(l)$ is additive, it leads to the following commutative diagram of abelian groups:
$$
\xymatrix{
\Hom_{\Num(k)}(\mathfrak{h}(\mathrm{pt})(-d), M\oplus \mathfrak{h}(Q_q)) \ar[d]_-{\theta_\ast}^-\simeq \ar[rr]^-{\varphi\oplus \varphi_{q}} && \Hom_{\Num(l)}(\mathfrak{h}(\mathrm{pt})(-d), (M\times_k l)\oplus \mathfrak{h}(Q_q \times_k l)) \ar[d]^-{(\theta\times_k l)_\ast}_-{\simeq} \\
\Hom_{\Num(k)}(\mathfrak{h}(\mathrm{pt})(-d), M\oplus \mathfrak{h}(Q_{q'})) \ar[rr]_-{\varphi\oplus \varphi_{q'}} && \Hom_{\Num(l)}(\mathfrak{h}(\mathrm{pt})(-d), (M\times_k l)\oplus \mathfrak{h}(Q_{q'} \times_k l))\,.
}
$$
This implies that $\mathrm{cok}(\varphi\oplus \varphi_q) \simeq \mathrm{cok}(\varphi\oplus \varphi_{q'})$ or, equivalently, that $\mathrm{cok}(\varphi) \oplus \mathrm{cok}(\varphi_q) \simeq \mathrm{cok}(\varphi) \oplus \mathrm{cok}(\varphi_{q'})$. Using the fact that these abelian groups are finitely generated (consult \cite[Prop.~3.2.7.1]{Andre}), we hence conclude that $\mathrm{cok}(\varphi_q)\simeq \mathrm{cok}(\varphi_{q'})$. The proof follows now from a celebrated result of Springer (consult \cite[Cor.~71.3]{Book-quadrics}\cite{Springer}), which asserts that $\mathrm{cok}(\varphi_q)\simeq 0$ if $q$ is isotropic and $\mathrm{cok}(\varphi_q)\simeq \bbZ/2$ if $q$ is anisotropic.
\end{proof}
Let $P$ be a (fixed) ordering of $k$ and $k_P$ the associated real-closure of $k$. Note first that extension of scalars along the inclusion $k \subset k_P$ gives rise to a ring homomorphism $-\times_k k_P \colon K_0\mathrm{Var}(k) \to K_0\mathrm{Var}(k_P)$. Consequently, since by hypothesis $[\mathrm{Iv}(A,\ast)]=[\mathrm{Iv}(A',\ast')]$, we obtain the following equality in $K_0\mathrm{Var}(k_P)$:
\begin{equation}\label{eq:equalities-new}
[\mathrm{Iv}(A,\ast) \times_k k_P] = [\mathrm{Iv}(A',\ast') \times_k k_P] = [\mathrm{Iv}(A\otimes_k k_P, \ast \otimes_k k_P)]= [\mathrm{Iv}(A'\otimes_k k_P, \ast' \otimes_k k_P)]\,.
\end{equation}
Now, recall from item (iv) that if $[\mathrm{Iv}(A,\ast)]=[\mathrm{Iv}(A',\ast')]$, then $A\simeq A'$. This yields an induced isomorphism $A\otimes_k k_P \simeq A'\otimes_k k_P$. On one hand, if the central simple $k_P$-algebra $A\otimes_k k_P$ is not split, we have $\mathrm{sgn}_P(A,\ast)=0$; consult \cite[Thm.~1]{Tignol}. If this is the case, then $A'\otimes_k k_P$ is also not split and $\mathrm{sgn}_P(A',\ast')=0$. On the other hand, if $A\otimes_k k_P$ is split, we have an isomorphism of central simple $k_P$-algebras with involutions of orthogonal type $(A\otimes_k k_P, \ast \otimes_k k_P)\simeq (\mathrm{M}_{\mathrm{deg}(A)\times \mathrm{deg}(A)}(k_P), \ast_q)$ (consult Example \ref{ex:1}) and $\mathrm{sgn}_P(A,\ast) = |\mathrm{sgn}(q)|$. If this is the case, then $A'\otimes_k k_P$ is also split, we also have an isomorphism of central simple $k_P$-algebras with involution of orthogonal type $(A'\otimes_k k_P, \ast' \otimes_k k_P)\simeq (\mathrm{M}_{\mathrm{deg}(A)\times \mathrm{deg}(A)}(k_P), \ast_{q'})$, and $\mathrm{sgn}_P(A',\ast') = |\mathrm{sgn}(q')|$. Consequently, thanks to the above equality \eqref{eq:equalities-new} and to the fact that $\mathrm{Iv}(A\otimes_k k_P, \ast\otimes_k k_P)\simeq Q_q$ and $\mathrm{Iv}(A'\otimes_k k_P, \ast'\otimes_k k_P)\simeq Q_{q'}$, the proof of item (v) of Theorem \ref{thm:main-1} follows now from Proposition \ref{prop:key} below.
\begin{proposition}\label{prop:key}
Given two non-degenerate quadratic forms $q$ and $q'$ over a real-closed field $k$, we have the following implication:
\begin{eqnarray*}
[Q_q]=[Q_{q'}] \,\,\mathrm{in}\,\,K_0\mathrm{Var}(k) & \Rightarrow & |\mathrm{sgn}(q)|=|\mathrm{sgn}(q')|\,.
\end{eqnarray*}
\end{proposition}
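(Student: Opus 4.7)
The plan is a descent via Witt decomposition, routed through the motivic measure $\mu_{\mathrm{c}}$ in order to cancel the Lefschetz class. I would induct on $n := \mathrm{dim}(q) = \mathrm{dim}(q')$, and in fact prove the slightly stronger statement: \emph{if $\mu_{\mathrm{c}}([Q_q]) = \mu_{\mathrm{c}}([Q_{q'}])$ in $K_0(\Num(k))$ and $\mathrm{dim}(q) = \mathrm{dim}(q')$, then $|\mathrm{sgn}(q)| = |\mathrm{sgn}(q')|$}. This strengthening is forced on us by the cancellation step below; it is harmless because the original hypothesis $[Q_q] = [Q_{q'}]$ in $K_0\mathrm{Var}(k)$ implies both $\mathrm{dim}(Q_q) = \mathrm{dim}(Q_{q'})$ and, via $\mu_{\mathrm{c}}$, the hypothesis of the stronger statement. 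The preceding proposition now yields that $q$ and $q'$ are simultaneously isotropic or anisotropic; in the anisotropic case the real-closed assumption forces $q \simeq \pm\langle 1,\ldots,1\rangle$ (and similarly for $q'$), hence $|\mathrm{sgn}(q)| = n = |\mathrm{sgn}(q')|$ and we are done.

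In the isotropic case, Witt decomposition gives $q \simeq \bbH \perp \underline{q}$ and $q' \simeq \bbH \perp \underline{q'}$ with $\mathrm{dim}(\underline{q}) = \mathrm{dim}(\underline{q'}) = n-2$. The computational heart of the argument is to establish, in $K_0\mathrm{Var}(k)$, the identity
$$[Q_q] = \bbL^{n-2} + 1 + \bbL \cdot [Q_{\underline{q}}].$$
Writing $q = x_1 x_2 + \underline{q}(y_1,\ldots,y_{n-2})$, I would stratify $Q_q \subset \bbP^{n-1}$ as the affine open $\{x_2 \neq 0\} \simeq \bbA^{n-2}$ (parametrised by $y$ via $x_1 = -\underline{q}(y)/x_2$) together with its closed complement $\{x_2 = 0,\ \underline{q}(y) = 0\}$; the latter is the projective cone in $\bbP^{n-2}$ over $Q_{\underline{q}}$ with apex $[1:0:\cdots:0]$, and removing the apex gives a Zariski-locally trivial $\bbA^1$-bundle over $Q_{\underline{q}}$, so the cone has class $1 + \bbL \cdot [Q_{\underline{q}}]$. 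Applying the formula to both $q$ and $q'$ and subtracting yields $\bbL \cdot ([Q_{\underline{q}}] - [Q_{\underline{q'}}]) = 0$ in $K_0\mathrm{Var}(k)$.

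Finally I apply $\mu_{\mathrm{c}}$. Since the Lefschetz motive $\bbZ(-1) \in \Num(k)$ is $\otimes$-invertible, its class $\mu_{\mathrm{c}}(\bbL) = [\mathfrak{h}(\bbP^1)] - [\mathfrak{h}(\mathrm{pt})]$ is a unit in $K_0(\Num(k))$. Cancelling yields $\mu_{\mathrm{c}}([Q_{\underline{q}}]) = \mu_{\mathrm{c}}([Q_{\underline{q'}}])$, and the inductive hypothesis then gives $|\mathrm{sgn}(\underline{q})| = |\mathrm{sgn}(\underline{q'})|$; since $\mathrm{sgn}(\bbH) = 0$, this closes the induction. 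The main obstacle is precisely this Lefschetz cancellation: Borisov's example (cited in the introduction) shows that $\bbL$ is a zero divisor in $K_0\mathrm{Var}(k)$, so one cannot cancel directly, and the passage through $K_0(\Num(k))$ via $\mu_{\mathrm{c}}$ is essential — this is what dictates the stronger inductive statement. A minor secondary point is to check that the cone minus apex is genuinely Zariski-locally trivial over $Q_{\underline{q}}$, which follows by trivialising on standard affine charts of $Q_{\underline{q}}$.
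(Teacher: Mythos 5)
Your argument is correct and is essentially the paper's proof reorganized as an induction: your stratification identity $[Q_{\underline{q}\perp\bbH}]=1+\bbL\cdot[Q_{\underline{q}}]+\bbL^{\dim(\underline{q})}$ is exactly Lemma~\ref{lem:decomposition} for a single hyperbolic plane, the simultaneous-(an)isotropy input is the proposition immediately preceding this one, and the cancellation of $\bbL$ via the $\otimes$-invertibility of the Lefschetz motive in $\Num(k)$ is the same device --- the paper merely splits off all the hyperbolic planes at once and concludes by contradiction from $\underline{q}$ anisotropic versus $\underline{q'}$ isotropic, rather than descending one plane at a time. One small point of phrasing: in the recursive steps your hypothesis is only $\mu_{\mathrm{c}}([Q_q])=\mu_{\mathrm{c}}([Q_{q'}])$, so the difference $\bbL\cdot([Q_{\underline{q}}]-[Q_{\underline{q'}}])=0$ cannot be formed in $K_0\mathrm{Var}(k)$; one must first push the two stratification identities through $\mu_{\mathrm{c}}$ and subtract in $K_0(\Num(k))$, which is clearly what your final paragraph intends.
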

\begin{proof}
Recall first from item (i) of Theorem \ref{thm:main} that if by hypothesis $[Q_q]=[Q_{q'}]$, then $\mathrm{dim}(q)=\mathrm{dim}(q')$. It is well-known that every real-closed field is, in particular, euclidean. Consequently, thanks to Sylvester's law of inertia (consult \cite[Prop.~31.5]{Book-quadrics}), the quadratic forms $q$ and $q'$ are similar to the following ones
\begin{eqnarray}
q= \langle \underbrace{1, \ldots, 1}_{m\text{-}\text{copies}}, \underbrace{-1, \ldots, -1}_{n\text{-}\text{copies}}\rangle && q'=\langle \underbrace{1, \ldots, 1}_{m'\text{-}\text{copies}}, \underbrace{-1, \ldots, -1}_{n'\text{-}\text{copies}} \rangle
\end{eqnarray}
for uniquely determined integers $m\geq n \geq 0$ and $m'\geq n'\geq 0$, respectively. Note that since $\mathrm{dim}(q)=\mathrm{dim}(q')$, we have $m+n=m'+n'$. In what follows we will assume that $|\mathrm{sgn}(q)|\neq |\mathrm{sgn}(q')|$ and, by absurd, that $[Q_q]=[Q_{q'}]$. If $|\mathrm{sgn}(q)|=m-n \neq m'-n' = |\mathrm{sgn}(q')|$, then $m\neq m'$. Hence, we can (and will) assume without loss of generality that $m > m'$.

Let us consider first the particular case where $n=0$. In this case, the quadratic form $q$ is anisotropic while the quadratic form $q'$ is isotropic (because $n'>0$). However, if $[Q_q]=[Q_{q'}]$, then it follows from Proposition \ref{prop:key} that $q$ and $q'$ are both isotropic or both anisotropic. Hence, we arrive to a contradiction.

Let us consider now the case where $n>0$. In this case, we have orthogonal sum decompositions
\begin{eqnarray*}
q = \underline{q} \perp n\bbH \,\,\,\, \text{with}\,\,\,\, \underline{q}= \langle \underbrace{1, \ldots \ldots, 1}_{(m-n)\text{-}\text{copies}}\rangle &&
q' = \underline{q'} \perp n\bbH \,\,\,\, \text{with} \,\,\,\, \underline{q'}= \langle \underbrace{1, \ldots\ldots, 1}_{(m'-n)\text{-}\text{copies}}, \underbrace{-1, \ldots\ldots, -1}_{(n'-n)\text{-}\text{copies}}\rangle \,,
\end{eqnarray*}
where $\bbH=\langle 1,-1\rangle$ stands for the hyperbolic plane. Note that the quadratic form $\underline{q}$ is anisotropic while the quadratic form $\underline{q'}$ is isotropic (because $n'-n >0$). Making use of Lemma \ref{lem:decomposition} below, we hence obtain the following computations in the Grothendieck ring of varieties:
$$ [Q_q]=(1+ \bbL + \cdots + \bbL^{n-1}) + [Q_{\underline{q}}]\cdot \bbL^n + \bbL^{\mathrm{dim}(q)-2} \cdot (1+ \bbL + \cdots + \bbL^{n-1})$$
$$ [Q_{q'}]=(1+ \bbL + \cdots + \bbL^{n-1}) + [Q_{\underline{q'}}]\cdot \bbL^n + \bbL^{\mathrm{dim}(q)-2} \cdot (1+ \bbL + \cdots + \bbL^{n-1})\,.$$
This implies that $[Q_q]\cdot \bbL^n = [Q_{q'}]\cdot \bbL^n$ in $K_0\mathrm{Var}(k)$. Since $\mu_{\mathrm{c}}(\bbL)=[\bbZ(-1)]$ (consult \cite[\S13.2.2]{Andre}) and the Lefschetz motive $\bbZ(-1)\in \Num(k)$ is $\otimes$-invertible, this further implies that 
$$\mu_{\mathrm{c}}([Q_{\underline{q}}]\cdot \bbL^n) = \mu_{\mathrm{c}}([Q_{\underline{q'}}]\cdot \bbL^n) \Leftrightarrow \mu_{\mathrm{c}}([Q_{\underline{q}}])\cdot [\bbZ(-n)] = \mu_{\mathrm{c}}([Q_{\underline{q'}}])\cdot [\bbZ(-n)] \Leftrightarrow \mu_{\mathrm{c}}([Q_{\underline{q}}]) = \mu_{\mathrm{c}}([Q_{\underline{q'}}])\,.$$
Consequently, Proposition \ref{prop:key} implies that $\underline{q}$ and $\underline{q'}$ are both isotropic or both anisotropic, which is a contradiction. This concludes the proof.
\end{proof}
\begin{lemma}\label{lem:decomposition}
Given a non-degenerate quadratic form $q$ and an integer $n\geq 1$, we have the following computation in $K_0\mathrm{Var}(k)$ ($\bbL$ stands for the Grothendieck class $[\bbA^1]$):
\begin{equation}\label{eq:equality-key-L}
[Q_{q\perp n \bbH}] =(1+ \bbL + \cdots + \bbL^{n-1}) + [Q_q]\cdot \bbL^n + \bbL^{\mathrm{dim}(q)}\cdot (\bbL^{n-1} + \cdots + \bbL^{2(n-1)})\,.
\end{equation}
\end{lemma}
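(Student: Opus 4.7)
The plan is to prove \eqref{eq:equality-key-L} by induction on $n\geq 1$. The base case reduces to the identity
$$[Q_{q\perp \bbH}] \;=\; 1 + [Q_q]\cdot\bbL + \bbL^{\dim(q)},$$
and the inductive step is then an algebraic manipulation that applies this identity with $q$ replaced by $q\perp (n-1)\bbH$.

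To prove the base case, I would fix a hyperbolic basis $(e,f)$ of $\bbH$ and use linear coordinates $(v,x,y)$ on $V\oplus \langle e,f\rangle$, where $V$ denotes the underlying space of $q$, so that $Q_{q\perp\bbH}\subset \bbP(V\oplus \langle e,f\rangle)$ is cut out by $q(v)+2xy=0$. The scissor relation along the hyperplane $\{y=0\}$ splits $Q_{q\perp\bbH}$ into two strata. On the open chart $\{y\neq 0\}$, normalising to $y=1$ solves the equation as $x=-q(v)/2$, producing an isomorphism with $\bbA^{\dim(q)}$ of class $\bbL^{\dim(q)}$. The hyperplane section $\{y=0\}\cap Q_{q\perp\bbH}$ is precisely the projective cone over $Q_q\subset \bbP(V)$ with apex $[0:1:0]$; scissoring off the apex leaves the locus $\{[v:x:0]\,:\,v\neq 0,\,q(v)=0\}$, which maps to $Q_q$ by $[v:x:0]\mapsto [v]$ and is a Zariski-locally trivial line bundle (fibre $\bbA^1$, with local trivialisations obtained from sections of $\mathcal{O}_{\bbP(V)}(-1)$ restricted to affine opens of $Q_q$). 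Hence this piece has class $[Q_q]\cdot \bbL$, the cone has class $1 + [Q_q]\cdot\bbL$, and summing the two strata gives the base case.

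For the inductive step, applying the base case to the form $q\perp (n-1)\bbH$ in place of $q$ yields
$$[Q_{q\perp n\bbH}] \;=\; 1 + [Q_{q\perp (n-1)\bbH}]\cdot \bbL + \bbL^{\dim(q)+2(n-1)}.$$
Substituting the inductive hypothesis for $[Q_{q\perp (n-1)\bbH}]$ produces
$$1 + \bbL\cdot\!\bigl[(1+\bbL+\cdots+\bbL^{n-2}) + [Q_q]\cdot\bbL^{n-1} + \bbL^{\dim(q)}(\bbL^{n-2}+\cdots+\bbL^{2(n-2)})\bigr] + \bbL^{\dim(q)+2(n-1)},$$
which, after distributing $\bbL$ and absorbing the trailing term $\bbL^{\dim(q)+2(n-1)}$ into the last bracket, collapses to the right-hand side of \eqref{eq:equality-key-L}.

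The only non-formal input is the claim that the punctured stratum $\{[v:x:0]\,:\,v\neq 0,\,q(v)=0\}$ is Zariski-locally trivial as an $\bbA^1$-bundle over $Q_q$, so that its class in $K_0\mathrm{Var}(k)$ factors as $[Q_q]\cdot\bbL$; this is the point at which the geometric cone structure is converted into a genuine multiplicative identity in the Grothendieck ring, and I would exhibit explicit local trivialisations (rather than invoke a black-box result on affine bundles) to keep the argument self-contained. Every remaining ingredient is a direct scissor relation or a routine algebraic rearrangement.
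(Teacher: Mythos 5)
Your proposal is correct and follows essentially the same route as the paper: the same induction on $n$, the same base-case identity $[Q_{q\perp \bbH}]=1+[Q_q]\cdot\bbL+\bbL^{\mathrm{dim}(q)}$ obtained by stratifying the quadric into the open affine chart and the hyperplane-section cone over $Q_q$ (apex plus an $\bbA^1$-bundle over $Q_q$, as in Rost's motivic decomposition), and the same algebraic recursion for the inductive step. The only cosmetic difference is that you peel off the last hyperbolic plane and substitute the inductive hypothesis, whereas the paper peels off the first and substitutes the base case; these are interchangeable.
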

\begin{proof}
Let $\cP(n)$ be the equality \eqref{eq:equality-key-L}. The proof will be done by induction on $n$. We start by proving the equality $\mathcal{P}(1)$ (for every quadratic form $q$). Note that there exists a choice of coordinates $x_1, \ldots, x_{\mathrm{dim}(q)}, u, w$ such that $(q \perp \bbH)(x_1, \ldots, x_{\mathrm{dim}(q)}, u, w)= q(x_1, \ldots, x_{\mathrm{dim}(q)}) + uw$. Following Rost \cite[Prop.~2]{Rost}, consider the stratification $\mathrm{pt} \subset Z \subset Q_{q \perp \bbH}$ of $Q_{q \perp \bbH}$, where $Z:=\{[x_1: \cdots : x_{\mathrm{dim}(q)}:0:w]\,|\, q(x_1, \ldots, x_{\mathrm{dim}(q)}) + 0w=0\}$ and $\mathrm{pt}:=[0: \cdots 0:1:0]$. By definition of $K_0\mathrm{Var}(k)$, we hence obtain the following computation:
\begin{equation}\label{eq:computation-aux}
[Q_{q \perp \bbH}]= [Z] + [Q_{q \perp \bbH} \backslash Z] = [\mathrm{pt}] + [Z\backslash \mathrm{pt}] + [Q_{q \perp \bbH} \backslash Z]\,.
\end{equation}
On the one hand, note that $Q_{q \perp \bbH} \backslash Z\simeq \bbA^{\mathrm{dim}(q)}$. This implies that $[Q_{q \perp \bbH} \backslash Z]= \bbL^{\mathrm{dim}(q)}$. On the other hand, as explained in \cite[Prop.~2]{Rost}, the following projection map 
\begin{eqnarray*}
Z\backslash \mathrm{pt} \too Q_q && [x_1:\cdots : x_{\mathrm{dim}(q)}: 0: w] \mapsto [x_1:\cdots : x_{\mathrm{dim}(q)}]
\end{eqnarray*}
is a $1$-dimensional vector bundle. Following \cite[\S2 Prop.~2.3.3]{Integration}, this implies that $[Z\backslash \mathrm{pt}]=[Q_q]\cdot [\bbA^1]$. Consequently, since $[\mathrm{pt}]=1$, we conclude that \eqref{eq:computation-aux} reduces to the following computation:
\begin{equation}\label{eq:computation-key}
[Q_{q\perp \bbH}] = 1 + [Q_q] \cdot \bbL + \bbL^{\mathrm{dim}(q)}\,.
\end{equation}
Let us now prove the implication $\mathcal{P}(n) \Rightarrow \mathcal{P}(n+1)$. If the equality $\mathcal{P}(n)$ holds (for every quadratic form $q$), then we obtain the following computation in $K_0\mathrm{Var}(k)$:
$$ [Q_{(q\perp \bbH)\perp n\bbH}] =(1+ \bbL + \cdots + \bbL^{n-1}) + [Q_{q\perp \bbH}]\cdot \bbL^n + \bbL^{\mathrm{dim}(q)+2} \cdot (\bbL^{n-1} + \cdots + \bbL^{2(n-1)})\,.$$
By combining it with \eqref{eq:computation-key}, we hence obtain the searched equality $\mathcal{P}(n+1)$.
\end{proof}
\section{Proof of Theorem \ref{thm:applications-1}}
\noindent
{\bf Proof of item (i).} Recall first from item (i) of Theorem \ref{thm:main-1} that if $[\mathrm{Iv}(A,\ast)]=[\mathrm{Iv}(A',\ast')]$, then $\mathrm{deg}(A)=\mathrm{deg}(A')$. Also, recall from Example \ref{ex:2} that the odd dimensional involution varieties are the odd dimensional quadrics. As proved in \cite[\S15.A]{Book-inv}, the assignment $q \mapsto C_0(q)$ gives rise to an bijection between the similarity classes of non-degenerate quadratic forms of dimension $3$ and the isomorphism classes of quaternion algebras. In the same vein, as proved in \cite[\S15.B]{Book-inv}, the assignment $(A,\ast) \mapsto C_0(A,\ast)$ gives rise to a bijection between the isomorphism classes of central simple $k$-algebras with involution of orthogonal type of degree $4$ and the isomorphism classes of quaternion algebras over an \'etale quadratic $k$-algebra. Consequently, given two central simple $k$-algebras with involutions of orthogonal type $(A,\ast)$ and $(A',\ast')$ such that $[\mathrm{Iv}(A,\ast)]=[\mathrm{Iv}(A',\ast')]$ and $\mathrm{deg}(A)=\mathrm{deg}(A')\leq 4$, the proof follows now from the combination of item (iii) of Theorem \ref{thm:main-1} with the well-known fact that $(A,\ast)$ and $(A',\ast')$ are isomorphic if and only if $\mathrm{Iv}(A,\ast)$ and $\mathrm{Iv}(A',\ast')$ are isomorphic. 

\medskip

\noindent
{\bf Proof of item (ii).} Given a central simple $k$-algebra $A$ and two involutions of orthogonal type $\ast$ and $\ast'$ on $A$, recall from \cite[Thm.~B]{LewisT} that if $I(k)^3$ is torsion-free, then we have the following equivalence:
\begin{equation}\label{eq:star}
(A,\ast) \simeq (A,\ast') \Leftrightarrow \big(C_0(A,\ast) \simeq C_0(A,\ast')\,\,\text{and}\,\,\mathrm{sgn}_P(A,\ast)=\mathrm{sgn}_P(A,\ast')\,\,\text{for}\,\,\text{every}\,\,\text{ordering}\,\,P\,\,\text{of}\,\,k\big)\,;
\end{equation}
in the case where $k$ is formally-real and $\mathrm{deg}(A')$ is even, we assume moreover that $\tilde{u}(k)< \infty$. Now, let $(A,\ast)$ and $(A',\ast')$ be two central simple $k$-algebras with involution of orthogonal type such that $[\mathrm{Iv}(A,\ast)]=[\mathrm{Iv}(A',\ast')]$. Thanks to item (iv) of Theorem \ref{thm:main-1}, we have an isomorphism $A\simeq A'$. Therefore, we can (and will) assume without loss of generality that $A= A'$. Consequently, the proof follows now from items (iii) and (v) of Theorem \ref{thm:main-1}, from the above equivalence \eqref{eq:star}, and from the well-known fact that $(A,\ast)$ and $(A',\ast')$ are isomorphic if and only if $\mathrm{Iv}(A,\ast)$ and $\mathrm{Iv}(A',\ast')$ are isomorphic.

\end{document}

\end{proof}